\documentclass[11pt]{amsart}

\usepackage{amsmath,amsthm,amsfonts,latexsym,amscd,amssymb,enumerate,color,hyperref}
\input{xypic}

\usepackage{fullpage}

% \usepackage{lineno}
% \linenumbers % COMMENT THIS LINE TO HIDE LINE NUMBERS

\usepackage{rotating}
\usepackage{multirow}
\usepackage{graphicx}
\usepackage{hyperref}
\usepackage{epstopdf}
\usepackage[usenames,dvipsnames]{xcolor}
\usepackage{inputenc}
\usepackage{tikz-cd}

\usepackage{booktabs}
\usepackage{enumitem}
\usepackage{multicol}

\usepackage{etoolbox}
\newcounter{dummy}
\usepackage{enumitem} 
\makeatletter  
\newcommand\myitem[1][]{\item[#1]\refstepcounter{dummy}\def\@currentlabel{#1}}   
\makeatother

\usepackage{hyperref}
\hypersetup{colorlinks,allcolors=blue}

\def \<{\langle}
\def \>{\rangle}

\newcommand{\R}{\mathbb{R}}
\newcommand{\NN}{\mathbb{N}}

% GH Convergence

\newcommand{\GHto}{\stackrel{GH}{\longrightarrow}}
\newcommand{\GHtopair}{\xrightarrow{\protect{GH_{\mathsf{Pair}}}}}
\newcommand{\GHtopointed}{\xrightarrow{GH_*}}

\newcommand{\DD}{\mathcal{D}}
\newcommand{\QQ}{\mathcal{Q}}

\DeclareMathOperator{\asdim}{asdim}

\DeclareMathOperator{\Id}{Id}
\newcommand{\al}{\alpha}

\newcommand{\eps}{\varepsilon}

\newcommand{\vn}{\varnothing}
\newcommand{\monmet}{\mathsf{CMon(Met_*)}}

\newcommand{\mset}[1]{\left\{\!\left\{ #1 \right\}\!\right\}}

\newcommand{\Met}{\mathsf{Met}}
\newcommand{\MetPair}{\mathsf{Met_{Pair}}}
\newcommand{\PMet}{\mathsf{PMet}}

\DeclareMathOperator{\dist}{dist}
\DeclareMathOperator{\diam}{diam}

\DeclareMathOperator{\curv}{curv}

\newtheorem{thm}{Theorem}[section]
\newtheorem{cor}[thm]{Corollary}
\newtheorem{lem}[thm]{Lemma}
\newtheorem{prop}[thm]{Proposition}

\newtheorem{theorem}{Theorem}

\newtheorem{corollary}[theorem]{Corollary}

\theoremstyle{definition}
\newtheorem{defn}[thm]{Definition}

\newtheorem{example}[thm]{Example}
\newtheorem{rem}[thm]{Remark}

\newtheorem*{ack}{Acknowledgements}

\numberwithin{equation}{section}
\numberwithin{table}{section}

\makeatletter
\def\@setthanks{\vspace{-\baselineskip}\def\thanks##1{\@par##1}\thankses}
\makeatother

\begin{document}

\author[M.~Che]{Mauricio Che $^\mathrm{A}$}
\address[Che]{Department of Mathematical Sciences, Durham University, United Kingdom.}
\email{mauricio.a.che-moguel@durham.ac.uk}

\thanks{$^{\mathrm{A}}$Supported by CONACYT Doctoral Scholarship No. 769708.}

\author[F.~Galaz-Garc\'ia]{Fernando Galaz-Garc\'ia $^{\mathrm{B}}$}
\address[Galaz-Garc\'ia]{Department of Mathematical Sciences, Durham University, United Kingdom.}
\email{fernando.galaz-garcia@durham.ac.uk}

\author[L.~Guijarro]{Luis Guijarro $^{\mathrm{B},\mathrm{C}}$}
\address[Guijarro]{Department of Mathematics, Universidad Aut\'onoma de Madrid and ICMAT CSIC-UAM-UC3M, Spain}
\email{luis.guijarro@uam.es} 

\thanks{$^{\mathrm{B}}$Supported in part by research grants MTM2017-‐85934-‐C3-‐2-‐P from the  Ministerio de Econom\'{\i}a y Competitividad de Espa\~{na} (MINECO) and PID2021-124195NB-C32 from the Ministerio de Ciencia e Innovación (MICINN).}

\thanks{$^{\mathrm{C}}$Supported in part by QUAMAP - Quasiconformal Methods in Analysis and Applications (ERC grant 834728), and by ICMAT Severo Ochoa project CEX2019-000904-S (MINECO).}

\author[I.~Membrillo Solis]{Ingrid Amaranta Membrillo Solis $^\mathrm{D}$}
\address[Membrillo Solis]{Mathematical Sciences, University of Southampton, United Kingdom}
\email{i.membrillo-solis@soton.ac.uk} 
\curraddr{School of Mathematical Sciences, Queen Mary University of London, United Kingdom}
\email{i.a.membrillosolis@qmul.ac.uk}

\thanks{$^\mathrm{D}$Supported by the Leverhulme Trust (grant RPG-2019-055).}

\title[Metric Geometry of Persistence Diagrams]{Metric Geometry of Spaces of Persistence Diagrams}
\date{\today}

\begin{abstract}
Persistence diagrams are objects that play a central role in topological data analysis. In the present article, we investigate the local and global geometric properties of spaces of persistence diagrams. In order to do this, we construct a family of functors $\DD_p$, $1\leq p \leq\infty$,  that  assign, to each metric pair $(X,A)$, a pointed metric space $\DD_p(X,A)$. Moreover, we show that $\DD_{\infty}$ is sequentially continuous with respect to the Gromov--Hausdorff convergence of metric pairs, and we prove that $\DD_p$ preserves several useful metric properties, such as completeness and separability, for $p \in [1,\infty)$, and geodesicity and non-negative curvature in the sense of Alexandrov, for $p=2$. For the latter case, we describe the metric of the space of directions at the empty diagram. We also show that the Fr\'echet mean set of a Borel probability measure on $\DD_p(X,A)$, $1\leq p \leq\infty$, with finite second moment and compact support is non-empty. As an application of our geometric framework, we prove that the space of Euclidean persistence diagrams, $\DD_{p}(\R^{2n},\Delta_n)$,  $1\leq n$ and $1\leq p<\infty$, has infinite covering, Hausdorff, asymptotic, Assouad, and Assouad--Nagata dimensions.
\end{abstract}

\subjclass[2020]{53C23, 55N31, 54F45,64R40}

\keywords{Alexandrov spaces, asymptotic dimension, metric pairs, Gromov--Hausdorff convergence, persistence diagram, Fréchet mean set}
\maketitle

\setcounter{tocdepth}{1}
\tableofcontents

\section{Introduction}
\label{S:INTRO}

After first appearing in the pioneering work of Edelsbrunner, Letscher, and Zomorodian \cite{ELZ00},  in recent years, persistent homology has become an important tool in the analysis of scientific datasets, covering a wide range of applications \cite{ARC14,buchet18,E08,kov16,munch13,PDeM15,zhu13} and playing a central role in topological data analysis. 

In \cite{ZC05}, Carlsson and Zomorodian introduced persistence modules indexed by the natural numbers as the algebraic objects underlying persistent homology. 
The successful application of persistent homology in data analysis is, to a great extent, due to the notion of persistence diagrams. These were introduced by Cohen-Steiner, Edelsbrunner, and Harer as equivalent representations for persistence modules indexed by the positive real numbers \cite{C-SEH07}. More precisely, a \textit{persistence diagram} is a multiset of points $(b,d)\in\overline{\mathbb{R}}^2_{\geq 0}$, where  $\overline {\mathbb R}^2_{\geq 0}=\{(x,y)\in\mathbb \overline \overline{\mathbb R}\times \overline{\mathbb R}: 0\leq x< y\}$ and $\overline{\mathbb R}=\mathbb R\cup \{-\infty,\infty\}$.  Persistence diagrams are objects that, in a certain sense, are easier to visualize than persistence modules. Moreover, the set of persistence diagrams supports a family of metrics, called \emph{$p$-Wasserstein metrics}, parametrized by $1\leq p\leq \infty$ (see \cite{CEHM10}), with the metric corresponding to $p=\infty$ also known as the \emph{bottleneck distance}. We will denote by $\DD_p(\mathbb R^2,\Delta)$, where $\Delta$ is the diagonal of $\R^2$, the metric space defined by the set of persistence diagrams that arise in persistent homology equipped with the $p$-Wasserstein metric.

Several authors have extensively studied the geometry and topology of the spaces $\DD_p(\mathbb R^2,\Delta)$. For instance, Mileyko, Mukherjee, and Harer \cite{mileyko1} examined the completeness, separability, and compactness of subsets of the space $\DD_p(\mathbb R^2,\Delta)$ with the $p$-Wasserstein metric, $1\leq p<\infty$. Turner et al.\  \cite{turner14} showed that $\DD_2(\mathbb R^2,\Delta)$ 
is an (infinite dimensional) Alexandrov space with non-negative curvature. The results in \cite{mileyko1} imply the existence of Fr\'echet means for certain probability distributions on $\DD_p(\mathbb R^2,\Delta)$. For $p=2$, the results in \cite{turner14} imply the convergence of certain algorithms used to find Fr\'echet means of finite sets in $\DD_p(\mathbb R^2,\Delta)$. Turner \cite{turner20} studied further statistical properties, such as the median of finite sets in $\DD_p(\mathbb R^2,\Delta)$, and its relation to the mean.  All these results are crucially based on the presence of the $p$-Wasserstein metric and, when $p=2$, on the Alexandrov space structure.

\subsection*{Our contributions}  Motivated by the preceding considerations, we develop a general framework for the geometric study of generalized spaces of persistence diagrams.
To the best of our knowledge, the present article is the first attempt to systematically analyze the geometric properties of such spaces. Our departure point is the existence of a family of functors $\DD_p\colon \MetPair\to \Met_*$, $1\leq p < \infty$ (resp.\ $\DD_\infty\colon \MetPair\to \PMet_*$), from the category $\MetPair$ of metric pairs equipped with relative Lipschitz maps into the category $\Met_*$ of pointed metric spaces equipped with pointed Lipschitz maps (resp.\ the category $\PMet_*$ of pointed pseudometric spaces equipped with pointed Lipschitz maps), which assign to each metric pair $(X,A)$, where $X$ is a metric space and $A\subset X$ is a closed and non-empty subset, a space of persistence diagrams $\DD_p(X,A)$. In particular,  for $(X,A) = (\R^2,\Delta)$,  where  $\Delta = \{(x,y)\in \R^2: x=y\}$ is the diagonal of $\R^2$, we recover the spaces $\DD_p(\R^2,\Delta)$,  that arise in persistent homology, equipped with the $p$-Wasserstein distance. These spaces were studied in \cite{mileyko1,turner14}.  
Bubenik and Elchesen studied such functors from an algebraic point of view in \cite{bubenik2}. Here, we disregard the algebraic structure  and focus on the behavior of several basic topological, metric, and geometric properties and invariants under the functors $\DD_p$. When $X=\R^{2n}_{\geq 0} = \{(x_1,y_1,\dots,x_n,y_n):0\leq x_i\leq y_i\ \text{for}\ i=1,\dots,n\}$ and $A=\Delta_n = \{(x_1,y_1,\dots,x_n,y_n)\in \R^{2n}_{\geq 0}: x_i=y_i\ \text{for}\ i=1,\dots, n\}$ for $n\geq 2$, the resulting spaces $\DD_p(\R^{2n}_{\geq 0},\Delta_n)$, that we call from now on \emph{spaces of Euclidean persistence diagrams}, can be considered as the parameter spaces for rectangle persistent modules. These modules arise in the context of multiparameter persistent homology and have been investigated by several authors (see, for example, \cite{bak2021,botnan2022,cochoy2020,SC17}).

Our aim is twofold: first, to show that many basic results useful in statistical analysis on spaces of Euclidean persistence diagrams hold for the generalized persistence diagram spaces  $\DD_p(X,A)$, 
and, second, to study the intrinsic geometry of such spaces, which is of interest in its own right. As an application of our framework, we show that different notions of dimension are infinite for spaces the spaces of Euclidean diagrams $\DD_p(\R^{2n},\Delta_n)$.

Given that each metric pair $(X,A)$ gives rise to a pointed metric space $\DD_p(X,A)$,  it is natural to ask whether some form of continuity holds with respect to $(X,A)$. To address this question, we introduce the \emph{Gromov--Hausdorff convergence of metric pairs}, a mild generalization of the usual Gromov--Hausdorff convergence of pointed metric spaces (see Definition~\ref{d:pairsGHconv}), and obtain our first main result.

\begin{theorem}
\label{t:continuity}
The functor $\DD_p$
, $1\leq p\leq \infty$, is sequentially continuous with respect to the Gromov--Hausdorff convergence of metric pairs if and only if  $p=\infty$.
\end{theorem}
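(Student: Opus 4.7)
The plan is to prove the two implications separately. The essential point is that the bottleneck (maximum) structure for $p = \infty$ is compatible with the uniform distance distortions supplied by pair Gromov--Hausdorff approximations, while the summation structure of the $p$-Wasserstein distance for $1 \leq p < \infty$ amplifies per-point errors in an uncontrolled way as the cardinality of a diagram grows.

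For the implication $p < \infty \Rightarrow$ discontinuity, I will exhibit an explicit counterexample. Let $A = \{\ast\}$ and let $X_n = A \cup \{x_1^n,\dots,x_n^n\}$ be isometrically embedded in $\R$ with $d(x_i^n, \ast) = i\varepsilon_n$, where $\varepsilon_n = n^{-(p+1)/p}$. Since $\diam(X_n) = n\varepsilon_n = n^{-1/p} \to 0$, we have $(X_n, A) \GHtopair (A, A)$. However, the maximal diagram $D_n = \sum_{i=1}^n \delta_{x_i^n} \in \DD_p(X_n, A)$ satisfies
\[
d_p(D_n, \emptyset)^p \;=\; \varepsilon_n^p \sum_{i=1}^n i^p \;\geq\; \frac{\varepsilon_n^p\, n^{p+1}}{p+1} \;=\; \frac{1}{p+1},
\]
so $D_n$ lies at uniformly positive distance from the base point of $\DD_p(X_n, A)$. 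Since $\DD_p(A, A) = \{\emptyset\}$ is a single point, the $R$-ball around the base point in $\DD_p(X_n, A)$ fails to shrink for $R > 1/(p+1)^{1/p}$, so pointed Gromov--Hausdorff convergence $\DD_p(X_n, A) \to \DD_p(A, A)$ fails.

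For the implication $p = \infty \Rightarrow$ continuity, given $(X_n, A_n) \GHtopair (X, A)$ I will fix $R > 0$ and, for $n$ large, extract from the pair Gromov--Hausdorff approximation maps $\varphi_n \colon X_n \to X$ and $\psi_n \colon X \to X_n$ with distortion at most $\varepsilon_n$ on the balls of radius $3R$ around $A_n$ and $A$, and with $\varphi_n(A_n)$, $\psi_n(A)$ in the $\varepsilon_n$-neighborhoods of $A$ and $A_n$, where $\varepsilon_n \to 0$. Given $D \in \DD_\infty(X_n, A_n)$ with $d_\infty(D, \emptyset) \leq R$, define $\Phi_n(D) \in \DD_\infty(X, A)$ by applying $\varphi_n$ pointwise and discarding all image points within distance $\varepsilon_n$ of $A$. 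The key estimate is that for any $D, D' \in \DD_\infty(X_n, A_n)$ with $d_\infty(D, D') = \delta$, any partial matching realizing $\delta$ pushes forward to a partial matching between $\Phi_n(D)$ and $\Phi_n(D')$ whose cost is at most $\delta + O(\varepsilon_n)$: matched pairs are displaced by at most $\varepsilon_n$ under $\varphi_n$, and discarded points were within $O(\varepsilon_n)$ of $A$ and thus match to $A$ at that cost. Because the bottleneck cost is a \emph{maximum} rather than a sum, these errors do not accumulate over points, in stark contrast with the $p < \infty$ case. Constructing $\Psi_n$ symmetrically and verifying that $d_\infty(\Psi_n \Phi_n(D), D) = O(\varepsilon_n)$ then realizes $\Phi_n$ as an $O(\varepsilon_n)$-isometry with $O(\varepsilon_n)$-dense image on the $R$-ball around the base point, which yields pointed Gromov--Hausdorff convergence.

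The main technical obstacle lies in handling diagrams $D \in \DD_\infty(X_n, A_n)$ with infinitely many points accumulating near $A_n$: one must verify that $\Phi_n(D)$ remains a legitimate element of $\DD_\infty(X, A)$ after the discarding step, and that the pushforward-of-matchings argument extends to possibly infinite partial matchings without inflating the bottleneck bound. Both reduce to the finiteness condition characterizing membership in $\DD_\infty$ together with the uniform displacement bound on $\varphi_n$, but care must be taken to choose the threshold $\varepsilon_n$ for discarding in a way that simultaneously controls the distortion and preserves the finiteness condition.
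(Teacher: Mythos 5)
Your proposal is correct and follows essentially the same two-part strategy as the paper: a counterexample for $p<\infty$ exploiting the fact that the $p$-Wasserstein sum accumulates many small per-point distances while the limit space $\DD_p(A,A)$ is a singleton, and a pushforward-of-approximations argument for $p=\infty$ in which the bottleneck supremum prevents error accumulation (the paper's Proposition~\ref{prop:continuity} constructs exactly the $3\varepsilon_i$-approximations $(f_i)_*$ that your $\Phi_n$ describes, handling infinite diagrams by lifting and pushing forward bijections in both directions). The only difference is cosmetic: the paper's Example~\ref{R:NO_CONTINUITY_P_FINITE} takes $X_i=[-1/i,1/i]$ collapsing to a point and uses a single point of multiplicity $n$ to show $\DD_p(X_i,A_i)$ stays unbounded, whereas you use $n$ distinct points at graded distances to produce a diagram at uniformly positive distance from the base point; both derive the same contradiction with condition (1) of the definition of an $\varepsilon_i$-approximation.
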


One may think of Theorem~\ref{t:continuity} as providing formal justification for using persistence diagrams calculated by computers in applications. Since computers have finite precision,
 such diagrams are elements of a discrete space that approximates the ideal space of persistence diagrams in the Gromov--Hausdorff sense.
Theorem~A ensures that this approximation is continuous. In particular, a small perturbation of the space of parameters $(X,A)$ (which is $(\R^2,\Delta)$ in the Euclidean case)  will result in a small  perturbation in the corresponding space of persistence diagrams.

Our second  main result is the invariance of several basic metric properties under $\DD_p$, $1\leq p<\infty$,  generalizing results in \cite{mileyko1,turner14} for spaces of Euclidean persistence diagrams. These properties include completeness and geodesicity (which we require of Alexandrov spaces), as well as non-negative curvature when  $p=2$. 
    
\begin{theorem}
\label{t:properties_Dp}
Let $(X,A)\in \MetPair$ and let $1\leq p< \infty$. Then the following assertions hold:
\begin{enumerate}
    \item If $X$ is complete, then $\DD_p(X,A)$ is complete.
    \label{it:completeness}
    \item If $X$ is separable, then $\DD_p(X,A)$ is  separable.
    \label{it:separability}
    \item If $X$ is a proper geodesic space, then $\DD_p(X,A)$ is a geodesic space.
    \label{it:geodesicity}
    \item If $X$ is a proper Alexandrov space with non-negative curvature, then $\DD_2(X,A)$ is an Alexandrov space with non-negative curvature.
    \label{it:nncurvature}
\end{enumerate}
\end{theorem}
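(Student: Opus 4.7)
The proof will exploit the matching description of the $p$-Wasserstein distance on $\DD_p(X,A)$: for diagrams $D, D' \in \DD_p(X,A)$ one has
\[
d_p(D,D')^p = \inf_\phi \sum_{x \in D} d_X(x, \phi(x))^p,
\]
where $\phi$ ranges over partial bijections between $D$ and $D'$, with unmatched points paired to their nearest point in $A$. My plan is to handle the four items sequentially, reducing each to a pointwise property of $X$ (completeness, separability, midpoint existence, Alexandrov comparison) plus a diagonal/combinatorial extraction on the matching side. Properness of $X$ enters as a tool to guarantee existence of optimal matchings, nearest points in $A$, and subsequential convergence of matched points.

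\textbf{(1) and (2).} For completeness, I will follow the Mileyko--Mukherjee--Harer scheme adapted to the pair $(X,A)$. Given a Cauchy sequence $\{D_n\}$ in $\DD_p(X,A)$, pick near-optimal matchings $\phi_{n,n+1}$ between consecutive diagrams with total $p$-cost summable along a subsequence. Track each point through the chain of matchings; using completeness of $X$ and finiteness of the $p$-moment (uniform over $n$), each ``track'' either converges in $X$ or escapes to $A$. Collect the limit points (with multiplicities) into a candidate diagram $D_\infty$, verify $D_\infty \in \DD_p(X,A)$ via Fatou's lemma applied to $\sum_x d_X(x,A)^p$, and show $d_p(D_n, D_\infty) \to 0$ using the matchings themselves. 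For separability, given a countable dense $Q \subset X$, define the set of all finite diagrams supported on $Q$; given $D \in \DD_p(X,A)$, first truncate by discarding points $x$ with $d_X(x,A) < \varepsilon$ (a finite operation, since the $p$-moment is finite and only finitely many points have $d_X(x,A) \geq \varepsilon$), then perturb each remaining point to a nearby element of $Q$. The total $p$-cost of both approximations can be made arbitrarily small.

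\textbf{(3) Geodesicity.} Given $D_0, D_1 \in \DD_p(X,A)$, first invoke properness of $X$ (which implies that nearest points in $A$ exist and that sets of matchings with bounded cost are compact in an appropriate sense) to produce an optimal matching $\phi\colon D_0 \to D_1$, with unmatched points paired to elements of $A$ realizing $d_X(\cdot, A)$. Since $X$ is geodesic, for each matched pair $(x, \phi(x))$ pick a unit-speed geodesic $\gamma_x\colon [0,1] \to X$ and define the midpoint diagram $M$ as the multiset $\{\gamma_x(1/2) : x \in D_0 \cup D_1\}$, discarding points that land in $A$. The matching taking $x$ to $\gamma_x(1/2)$ gives $d_p(D_0, M) \leq \tfrac{1}{2}d_p(D_0, D_1)$, and symmetrically for $D_1$; the triangle inequality forces equality. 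Completeness from (1) plus the existence of midpoints then upgrades to existence of actual geodesics.

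\textbf{(4) Nonnegative curvature.} By items (1) and (3), $\DD_2(X,A)$ is a complete geodesic space, so it suffices to verify the quadruple (CAT$(0)$-reversed) midpoint inequality: for every $D_0, D_1, D_2 \in \DD_2(X,A)$ and every midpoint $M$ of $D_1$ and $D_2$,
\[
d_2(D_0, M)^2 \geq \tfrac{1}{2}d_2(D_0, D_1)^2 + \tfrac{1}{2}d_2(D_0, D_2)^2 - \tfrac{1}{4}d_2(D_1, D_2)^2.
\]
Choose optimal matchings $\psi_i\colon D_0 \to D_i$ for $i = 1, 2$; composing through $D_0$ yields a natural coupling $(\psi_1(x), \psi_2(x))$ for each $x \in D_0$ (and analogous entries for the parts of $D_1, D_2$ not in the image of $\psi_1, \psi_2$, which are matched to $A$). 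Pointwise in $X$, apply the nonnegative-curvature midpoint inequality to the triple $\bigl(\psi_1(x)\,\text{or}\,\psi_2(x), \psi_1(x), \psi_2(x)\bigr)$ viewed from $x$; summing over all points and using the coupling as a (possibly suboptimal) matching between $D_0$ and $M$ to estimate $d_2(D_0, M)^2$ from below, while using the matchings $\psi_i$ to equate $d_2(D_0, D_i)^2$ with the corresponding sums, yields the required inequality.

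\textbf{Main obstacle.} The delicate point is the bookkeeping of matchings when diagonal/unmatched points are present: one must consistently handle points of $D_1$ or $D_2$ that get paired to $A$ under some $\psi_i$ but to a genuine point under another, and ensure that nearest-point projections to $A$ can serve as legitimate entries in the coupling used for (4). This is where properness of $X$ is essential, both to guarantee existence of nearest points in $A$ and to ensure that the optimal matchings used in (3) and (4) actually exist; managing these degenerate matches carefully, without introducing spurious overcounting or undercounting of cost, is the most technical step of the whole argument.
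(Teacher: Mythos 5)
Items (1)--(3) of your proposal are essentially the paper's argument: completeness and separability follow the Mileyko--Mukherjee--Harer scheme adapted to $(X,A)$, and geodesicity rests on the existence of optimal bijections (which the paper extracts from properness via a diagonal/Arzel\`a--Ascoli-type argument) followed by a convex-combination construction; your midpoint-plus-completeness variant of (3) is a harmless cosmetic difference. The problem is item (4), where your bookkeeping is centered at the wrong vertex and the inequality directions do not close.

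Concretely, you fix optimal matchings $\psi_i\colon D_0 \to D_i$ and propose to (a) ``use the coupling as a (possibly suboptimal) matching between $D_0$ and $M$ to estimate $d_2(D_0,M)^2$ from below'' and (b) subtract $\tfrac14\sum_{x} d(\psi_1(x),\psi_2(x))^2$ as a stand-in for $\tfrac14 d_2(D_1,D_2)^2$. Both steps fail. For (a), a specific matching only ever gives an \emph{upper} bound on a Wasserstein distance, so no lower bound on $d_2(D_0,M)^2$ can come out of it; moreover the pointwise midpoints of $\psi_1(x)$ and $\psi_2(x)$ need not assemble into the given midpoint $M$ of $D_1$ and $D_2$ at all. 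For (b), the composed matching $\psi_2\circ\psi_1^{-1}\colon D_1\to D_2$ need not be optimal, so $\sum_x d(\psi_1(x),\psi_2(x))^2 \geq d_2(D_1,D_2)^2$, and since this quantity is \emph{subtracted} the resulting bound is weaker than the comparison inequality you need, not stronger. The paper avoids both problems by centering all optimal bijections at the midpoint itself: if $\xi$ is a geodesic from $D_1$ to $D_2$ and $\phi_i\colon\xi(1/2)\to D_i$ are optimal, then (Lemma \ref{lem:midpoints}) $\phi_2\circ\phi_1^{-1}$ is an \emph{optimal} bijection $D_1\to D_2$ and each $x\in\xi(1/2)$ is a genuine midpoint in $X$ of $\phi_1(x)$ and $\phi_2(x)$. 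With that lemma, $d_2(D_0,\xi(1/2))^2$ equals the sum $\sum_x d(x,\phi_0(x))^2$ exactly (optimality), the subtracted term equals $\tfrac14 d_2(D_1,D_2)^2$ exactly (optimality of the composition), and the two positive terms are only ever under-counted, which is the direction one can afford. Without this lemma, or some substitute identifying every midpoint of $D_1,D_2$ with a pointwise-midpoint diagram coming from an optimal matching, your argument for (4) does not go through.
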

One may show that $\DD_p(X,A)$ is complete if and only if $X/A$ is complete for any $p\in [1,\infty]$ (see \cite{CGGGMSV2022}). The behavior of $\DD_\infty$ is substantially different to that of $\DD_p$, $1\leq p<\infty$. Indeed, Theorem~\ref{t:properties_Dp} %and \ref{t:compactness} 
fails for $\DD_\infty$, as shown in \cite{CGGGMSV2022}.

The functor $\DD_p$ allows us to carry over, with some minor modifications, the Euclidean proofs  of completeness and separability in \cite{mileyko1} to prove items \eqref{it:completeness} and \eqref{it:separability} in Theorem~\ref{t:properties_Dp}. Items~\eqref{it:geodesicity} and~\eqref{it:nncurvature} generalize the corresponding Euclidean results in \cite{turner14}, asserting that the spaces $\DD_p(\R^2,\Delta)$, $1\leq p<\infty$, are geodesic and that $\DD_2(\R^2,\Delta)$ is an Alexandrov space of non-negative curvature.   Our proofs differ from those in \cite{turner14} and rely on a characterization of geodesics in persistence diagram spaces originally  obtained by Chowdhury in \cite{chowdhury} in the Euclidean case.

Motivated by results in \cite{turner14} on the Alexandrov space $\DD_2(\R^2,\Delta)$, we analyze  the infinitesimal geometry of Alexandrov spaces arising via Theorem~\ref{t:properties_Dp}~\eqref{it:nncurvature} at their distinguished point, the empty diagram $\sigma_{\vn}$. The infinitesimal structure of an Alexandrov space at a point is captured by the \emph{space of directions}, which is itself a metric space and corresponds to the unit tangent sphere in the case of Riemannian manifolds. 

First, we show that the space of directions $\Sigma_{\sigma_\vn}$ at $\sigma_\vn \in \DD_2(X,A)$  has diameter at most $\pi/2$ (Proposition~\ref{prop:diameter.space.of.directions}). Second, we show that directions in $\Sigma_{\sigma_\vn}$ corresponding to finite diagrams are dense in $\Sigma_{\sigma_\vn}$ (Proposition~\ref{prop:directions.to.finite.diagrams}). Finally, we use this to obtain an explicit description of the metric structure of $\Sigma_{\sigma_{\vn}}$. These results are new, even in the case of Euclidean persistence diagrams.

\begin{theorem}
\label{t:space.of.directions.empty.diagram}
\label{prop:explicit.space.of.directions} The space of persistence diagrams $\Sigma_{\sigma_\vn}$ at $\sigma_\vn \in \DD_2(X,A)$  has diameter at most $\pi/2$ and directions in $\Sigma_{\sigma_\vn}$ corresponding to finite diagrams are dense in $\Sigma_{\sigma_\vn}$. Moreover, 
consider elements in $\Sigma_{\sigma_\varnothing}$ given by geodesics $\xi_\sigma= \{\xi_a\}_{a\in\sigma}$ and $\xi_{\sigma'}= \{\xi_{a'}\}_{a'\in\sigma'}$ joining $\sigma_\varnothing$ with $\sigma,\sigma'\in \DD_2(X,A)$, where $\xi_a$ and $\xi_{a'}$ are geodesics in $X$ joining points in $A$ with $a$ and $a'$, respectively. Then
\[
d_2(\sigma,\sigma_\varnothing)d_2(\sigma',\sigma_\varnothing)\cos\angle(\xi_{\sigma},\xi_{\sigma'})  = \sup_{\phi\colon\tau\to\tau'} \sum_{a \in \tau} d(a,A)d(a',A)\cos\angle(\xi_a,\xi_{\phi(a)}),
\]
where $d$ is the metric on $X$, $d_2$ is the $2$-Wasserstein metric, and $\phi$ ranges over all bijections between subsets $\tau$ and $\tau'$ of %off-diagonal 
points in $\sigma$ and $\sigma'$, respectively, such that $\xi_a(0) = \xi_{\phi(a)}(0)$ for all $a \in \tau$.
\end{theorem}

In $\DD_2(\R^2,\Delta)$, persistence diagrams in a neighborhood of the empty diagram may be thought of as coming from noise. Thus, the space of directions at the empty diagram may be interpreted as directions coming from noise. Theorem~\ref{prop:explicit.space.of.directions} implies that the geometry at the empty diagram is singular, as directions at this point make an angle of at most $\pi/2$. In particular, the infinitesimal geometry is not Euclidean. Hence, embedding noisy sets of persistence diagrams into a Hilbert space might require large metric distortions. This might be of relevance in the vectorization of sets of persistence diagrams where noise might be present, which in turn plays a role in applying machine learning to such sets (see, for example, \cite{bubenik.persistence.landscapes,carriere_et_al:LIPIcs.SoCG.2019.21,kusano.et.al.machine.learning}).

A further advantage of the metric pair framework is that it yields the existence of Fr\'echet mean sets for certain classes of probability measures on generalized spaces of persistence diagrams, as shown in \cite{mileyko1} in the Euclidean case. The elements of Fr\'echet mean sets (which, a priori, may be empty) are also called \emph{barycenters} (see, for example, \cite{Afsari}) and may be interpreted as centers of mass of the given probability measure. In the case of the spaces $\DD_p(X,A)$, we may interpret a finite collection of persistence diagrams as a measure $\mu$ on $\DD_p(X,A)$ with finite support. An element of the corresponding Fr\'echet mean set then may be interpreted as an average of the diagrams determining the measure. For the spaces $\DD_p(X,A)$, we have the following result.

\begin{theorem}\label{t:frechet means}
Let $\mu$ be a Borel probability measure on $\DD_p(X,A)$, $1\leq p<\infty$. Then the following assertions hold:
\begin{enumerate}
\item If $\mu$ has finite second moment and compact support, then the Fr\'echet mean set of $\mu$ is non-empty.
\item If $\mu$ is tight and has rate of decay at infinity $q>\max\{2,p\}$, then the Fr\'echet mean set of $\mu$ is non-empty.
\end{enumerate}
\end{theorem}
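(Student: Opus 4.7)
The plan is to follow the template of Mileyko--Mukherjee--Harer~\cite{mileyko1}, suitably adapted from the Euclidean setting $\DD_p(\R^2,\Delta)$ to the metric pair setting $\DD_p(X,A)$. Set $F_\mu(\sigma):=\int_{\DD_p(X,A)} d_p(\sigma,\tau)^2\, d\mu(\tau)$, so that the Fréchet mean set consists of the minimizers of $F_\mu$. The strategy is the direct method: produce a minimizing sequence $\{\sigma_n\}$, extract a convergent subsequence, and verify by lower semicontinuity that the limit is a minimizer. Lower semicontinuity of $F_\mu$ is a routine consequence of Fatou's lemma, since $\tau\mapsto d_p(\sigma,\tau)^2$ is continuous and thus lower semicontinuous; the substantive task is extracting the convergent subsequence.

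The key ingredient is a compactness criterion for subsets of $\DD_p(X,A)$ analogous to Prokhorov's theorem for Wasserstein spaces. We expect a family $\mathcal{S}\subset \DD_p(X,A)$ to be relatively compact precisely when (i) the $p$-persistence $\int d(x,A)^p\, d\sigma(x)$ is uniformly bounded for $\sigma\in \mathcal{S}$, (ii) this $p$-mass is uniformly equi-integrable on tubular neighborhoods of $A$ (so that mass cannot escape into $A$), and (iii) for every $\varepsilon>0$ there exists a compact $K_\varepsilon\subset X$ such that, after removing a small neighborhood of $A$, the $p$-mass of each $\sigma\in\mathcal{S}$ outside $K_\varepsilon$ is at most $\varepsilon$. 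The proof of sufficiency uses a diagonal construction over a nested sequence of sets on which diagrams in $\mathcal{S}$ are finite and uniformly bounded, matching points between diagrams as in the Euclidean construction of~\cite{mileyko1}; the necessity part is essentially immediate.

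For part~(1), compactness of $\supp(\mu)$ yields an $R>0$ with $d_p(\tau,\sigma_\vn)\le R$ for every $\tau\in\supp(\mu)$. The triangle inequality then forces $F_\mu$ to be coercive in the sense that $F_\mu(\sigma)\to\infty$ as $d_p(\sigma,\sigma_\vn)\to\infty$, so any minimizing sequence $\{\sigma_n\}$ satisfies condition~(i) of the criterion. To propagate conditions~(ii) and~(iii) from $\supp(\mu)$ to $\{\sigma_n\}$, one observes that mass of $\sigma_n$ located far from both $A$ and any compact $K_\varepsilon$ must be paid for in $d_p(\sigma_n,\tau)$ for \emph{every} $\tau\in\supp(\mu)$, and hence in $F_\mu(\sigma_n)$; quantifying this shows that a minimizing sequence must inherit the uniform tightness of the compact set $\supp(\mu)$. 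Applying the compactness criterion produces the desired convergent subsequence. Part~(2) follows the same outline, except that tightness and condition~(iii) for a minimizing sequence are established by combining tightness of $\mu$ with the polynomial tail bound provided by the rate-of-decay hypothesis, via a Chebyshev-type estimate applied to $F_\mu(\sigma_n)$; the exponent $q>\max\{2,p\}$ is precisely what is needed for these tail estimates to close.

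The principal obstacle is the compactness criterion itself: the three conditions must be formulated correctly in the generality of a metric pair $(X,A)$, and the sufficiency proof requires a careful matching argument between diagrams over successive compact exhaustions of $X\setminus A$. A secondary technical point in part~(2) is verifying that the rate-of-decay hypothesis indeed transfers to minimizing sequences, which amounts to showing that any $\sigma$ at uniformly positive $F_\mu$-distance from the infimum of $F_\mu$ must itself satisfy a decay estimate comparable to that of $\mu$. Once these two points are in place, the rest of the argument is a standard application of the direct method in a complete metric space.
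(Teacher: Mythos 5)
Your proposal is correct and follows essentially the same route as the paper: both reduce the problem to a characterization of totally bounded subsets of $\DD_p(X,A)$ (your conditions (i)--(iii) are the paper's ``bounded, off-diagonally birth-death bounded, and uniform''), establish coercivity of $F_\mu$ to bound a minimizing sequence, and then prove a quantitative key lemma showing that any failure of tightness in the minimizing sequence costs a definite amount in $F_\mu$ uniformly over the compact support, which contradicts minimality. The two technical obstacles you flag (adapting the compactness criterion to metric pairs, and the tail-transfer in part (2)) are precisely the points where the paper supplies Proposition~\ref{prop:totally bounded sets} and Lemma~\ref{l:key-lemma-frechet}, together with the properness of $X$ needed for the criterion.
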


The proof of this theorem follows along the lines of the proofs of the corresponding Euclidean statements in \cite{mileyko1}, and hinges on the fact, easily shown, that the characterization of totally bounded subsets of spaces of Euclidean persistence diagrams given in \cite{mileyko1} also holds in the setting of metric pairs (see Proposition \ref{prop:totally bounded sets}).

As mentioned above, our constructions include, as a special case, spaces of Euclidean persistence diagrams, such as the classical space  $\DD_p(\R^{2},\Delta)$. Our fourth main result shows that different notions of dimension for this space are all infinite. We let $\Delta_n = \{ (v,v) \in \R^{2n}: v \in \mathbb{R}^n \}$.

\begin{theorem}
\label{t:dimensions}
The space $\DD_p(\R^{2n},\Delta_n)$, $1\leq n$ and $1\leq p<\infty$, has infinite covering, Hausdorff, asymptotic, Assouad, and Assouad--Nagata dimension.
\end{theorem}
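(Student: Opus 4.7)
The strategy is to construct, for every $k \geq 1$, explicit embeddings of $k$-dimensional Euclidean pieces into $\DD_2(\R^{2n},\Delta_n)$. For the covering and Hausdorff dimensions, I would construct an isometric embedding $\iota_k\colon ([0,1]^k, \|\cdot\|_2) \hookrightarrow \DD_2(\R^{2n},\Delta_n)$. Fix a large constant $R = R(k)$ and choose points $p_1, \dots, p_k \in \R^{2n}$ with $\dist(p_i, \Delta_n) \geq R$ and $\|p_i - p_j\| \geq R$ for $i \neq j$. Pick unit vectors $v_i \in \R^{2n}$ and set
\[\iota_k(t_1, \dots, t_k) = \{p_1 + t_1 v_1, \dots, p_k + t_k v_k\}.\]
In the infimum defining $W_2(\iota_k(t), \iota_k(s))$, the ``natural'' bijection pairing $p_i + t_i v_i$ with $p_i + s_i v_i$ has cost $\|t-s\|^2 \leq k$, while any alternative matching either contains a cross-pairing ($i\leftrightarrow j$, $i\neq j$) contributing at least $(R-2)^2$, or transports a point to $\Delta_n$ at a cost of at least $(R-1)^2$. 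Choosing $R$ with $(R-2)^2 > 2k$ forces the natural matching to be strictly optimal, so $W_2(\iota_k(t), \iota_k(s)) = \|t-s\|$. Since $\iota_k$ isometrically embeds the $k$-cube as a closed subset of $\DD_2(\R^{2n},\Delta_n)$, monotonicity of covering dimension under closed subspaces and of Hausdorff dimension under bi-Lipschitz embeddings forces both to be at least $k$; as $k$ is arbitrary, both are infinite.

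For the asymptotic dimension, a bounded image does not suffice, so I construct instead a coarse embedding of $\Z_{\geq 0}^k$ into $\DD_2(\R^{2n},\Delta_n)$. Choose $q_1, \dots, q_k$ with $\dist(q_i, \Delta_n) = 1$ and $\|q_i - q_j\| > \sqrt{2}$ for $i \neq j$, and let $\jmath_k(m_1, \dots, m_k)$ be the persistence diagram consisting of $m_i$ copies of $q_i$ for each $i$. The optimal matching between $\jmath_k(m)$ and $\jmath_k(m')$ pairs coincident copies of each $q_i$ at zero cost and sends the $|m_i - m_i'|$ surplus copies to the diagonal at unit cost; any cross-pairing $q_i\leftrightarrow q_j$ ($i\neq j$) costs more than $2$, strictly worse than the alternative of sending both endpoints to the diagonal (cost $2$). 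Hence
\[W_2(\jmath_k(m), \jmath_k(m'))^2 = \sum_{i=1}^k |m_i - m_i'| = \|m - m'\|_1,\]
so $\jmath_k$ is an isometric embedding of $(\Z_{\geq 0}^k, \sqrt{\|\cdot\|_1})$ into $\DD_2(\R^{2n},\Delta_n)$. The identity map $(\Z_{\geq 0}^k, \|\cdot\|_1) \to (\Z_{\geq 0}^k, \sqrt{\|\cdot\|_1})$ is a coarse equivalence (both $t \mapsto \sqrt{t}$ and $t \mapsto t^2$ tend to infinity), so $\asdim(\Z_{\geq 0}^k, \sqrt{\|\cdot\|_1}) = \asdim(\Z_{\geq 0}^k, \|\cdot\|_1) = k$. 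Monotonicity of asymptotic dimension under coarse embeddings then yields $\asdim \DD_2(\R^{2n},\Delta_n) \geq k$ for every $k$.

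The principal technical step in both constructions is verifying that the natural matching is genuinely optimal, which reduces to a finite combinatorial comparison against the alternative matchings (cross-pairings, transports to the diagonal, and compositions thereof). Because only one embedding is needed per fixed $k$, the base points may be chosen depending on $k$, which removes any tension between the separation estimates and keeps each verification elementary.
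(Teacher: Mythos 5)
Your proposal is correct, and it reaches all three conclusions by a genuinely different route than the paper. For the Hausdorff dimension the paper does not embed anything: it observes that $\DD_2(\R^{2n},\Delta_n)$ fails to be locally compact and invokes the fact that a finite-dimensional Alexandrov space must be locally compact, so the argument leans on the non-negative curvature established earlier (Theorem~\ref{t:properties_Dp}); your isometric copies of $([0,1]^k,\|\cdot\|_2)$ give the same conclusion with no curvature input, and simultaneously handle the covering dimension. For the covering and asymptotic dimensions the paper instead pushes the ray $[0,\infty)\hookrightarrow\R^{2n}$ (escaping $\Delta_n$ linearly) through the functor, identifies the sub-diagram spaces $\DD_2^N([0,\infty),\{0\})$ with the quotients $[0,\infty)^N/S_N$, and appeals to Kasprowski's theorem that quotients of proper spaces by finite isometric actions preserve asymptotic dimension; your well-separated base points break the $S_N$-symmetry at the source, so no quotient analysis is needed, and your multiplicity lattice $\jmath_k\colon\Z_{\geq 0}^k\to\DD_2(\R^{2n},\Delta_n)$ (a snowflake embedding, since $d_2(\jmath_k(m),\jmath_k(m'))^2=\|m-m'\|_1$ once $\|q_i-q_j\|^2\geq 2$ makes routing through the diagonal at least as cheap as any cross-pairing) replaces the ray entirely. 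Both routes ultimately consume the same external fact $\asdim[0,\infty)^k=k$ (equivalently $\asdim\Z_{\geq 0}^k=k$), and your optimality verifications for the two matchings are sound. What each approach buys: the paper's ray criterion (Proposition~\ref{cor:asdim-rayemb}) is a reusable sufficient condition for infinite asymptotic dimension of $\DD_2(X,A)$ for general pairs, whereas your construction is more elementary and self-contained, detects infinite asymptotic dimension using diagrams supported on a \emph{bounded} configuration of points (only multiplicities grow), and therefore applies even to pairs $(X,A)$ in which $X\setminus A$ is bounded, where the paper's ray hypothesis fails.
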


It is known that every metric space of finite asymptotic dimension admits a coarse embedding into some Hilbert space (see \cite[Example 11.5]{Roe}). The spaces $\DD_{p}(\R^{2},\Delta)$, $2<p\leq \infty$, do not admit such  embeddings (see \cite[Theorem 21]{BW20} and \cite[Theorem 3.2]{Wagner21}). Hence, their asymptotic dimension is infinite (cf.\ \cite[Corollary 27]{BW20}). These observations, along with Theorem~\ref{t:dimensions}, immediately imply the following.

\begin{corollary}
\label{COR:infinite.asymptotic.dimension}
The space $\DD_{p}(\R^{2},\Delta)$, $1\leq p\leq \infty$, has infinite asymptotic dimension.
\end{corollary}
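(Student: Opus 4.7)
The plan is to split the range $2\leq p\leq \infty$ into two cases, handling $p=2$ and $p>2$ separately, since the tools available in each case are different.

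For $p=2$, the claim is immediate from Theorem~\ref{t:dimensions}. Indeed, taking $n=1$ in that theorem gives that $\DD_2(\R^{2},\Delta_1)=\DD_2(\R^{2},\Delta)$ has infinite covering, Hausdorff, \emph{and} asymptotic dimension; the third of these is exactly what the corollary asserts in the $p=2$ case. No further argument is needed in this case.

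For $2<p\leq\infty$, I would argue by contrapositive using the obstruction to coarse embeddability mentioned in the paragraph preceding the statement. Suppose for contradiction that $\DD_p(\R^2,\Delta)$ has finite asymptotic dimension. Then, by the standard fact that every metric space of finite asymptotic dimension admits a coarse embedding into a Hilbert space (cited via \cite[Example 11.5]{Roe}), there would exist a coarse embedding $\DD_p(\R^2,\Delta)\hookrightarrow \mathcal{H}$. However, the non-embedding results \cite[Theorem 21]{BW20} and \cite[Theorem 3.2]{Wagner21} rule out exactly such coarse embeddings in the range $2<p\leq\infty$. This contradiction forces the asymptotic dimension to be infinite.

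Combining the two cases yields the corollary. There is no real obstacle here: the bulk of the work is already packaged into Theorem~\ref{t:dimensions} (for $p=2$) and into the cited coarse-geometric results (for $p>2$), so the proof is essentially a two-line assembly, and the only thing to be careful about is ensuring the citations cover the full range $2<p\leq\infty$, including both the finite-$p$ regime handled by \cite{Wagner21} and the bottleneck case $p=\infty$ handled by \cite{BW20}.
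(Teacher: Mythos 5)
Your proof is correct and follows exactly the paper's argument: the case $p=2$ is covered by Theorem~\ref{t:dimensions}, and the case $2<p\leq\infty$ follows from the non-embeddability results of \cite{BW20} and \cite{Wagner21} combined with the fact that finite asymptotic dimension implies coarse embeddability into Hilbert space. No issues.
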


Our analysis also shows that other spaces of Euclidean persistence diagrams appearing in topological data analysis, such as 
$\DD_{p}(\mathbb R^{2n}_+,\Delta_n)$ and $\DD_{p}(\mathbb R^{2n}_{\geq0},\Delta_n)$, have infinite asymptotic dimension as well (see Section~\ref{s:Euclidean diagrams} for precise definitions). One may think of these spaces  as parameter spaces for persistence rectangles in multidimensional persistent homology \cite{bak2021,SC17}. We point out that the proof of Theorem~\ref{t:dimensions} is based on different arguments to those in \cite{BW20,Wagner21} and provides a unified approach for all $1\leq p<\infty$. The crucial point in our proof is the general observation that, if a metric pair $(X,A)$ contains a curve whose distance to $A$ grows linearly, then $\DD_{p}(X,A)$ has infinite asymptotic dimension (see Proposition~\ref{cor:asdim-rayemb}). 
We point out that our arguments to prove Theorem~\ref{t:dimensions} can be used to prove an analogous result for the spaces of persistence diagrams with finitely (but arbitrarily) many points equipped with the $p$-Wasserstein distance, $1\leq p<\infty$, as defined, for example, in \cite{bubenik2}. Thus, all such spaces have infinite Hausdorff, covering, asymptotic, Assouad, and Assouad--Nagata dimensions (see Corollary~\ref{c:dimensions.finite.spaces}).

Note that Theorem~\ref{t:properties_Dp}~\eqref{it:nncurvature} provides a systematic way of constructing examples of  Alexandrov spaces of non-negative curvature. In particular, by Theorem~\ref{t:dimensions}, the space $\DD_2(\R^{2n},\Delta_n)$ is an infinite-dimensional Alexandrov space. 
In contrast to the finite-dimensional case, where every Alexandrov space is proper, there are few results about infinite-dimensional Alexandrov spaces in the literature (see, for example, \cite[Section 13]{Plaut} and, more recently, \cite{mitsuishi,Yo2012,Yo2014}). Technical difficulties occur that do not arise in finite dimensions (see, for example, \cite{Ha}), and a more thorough understanding of the infinite-dimensional case is lacking.

\subsection*{Related work}
Different generalizations of persistence diagrams have appeared in the persistent homology literature, as well as their corresponding spaces of persistence diagrams. In \cite{Pat18}, Patel generalizes persistent diagram invariants of persistence modules to cases where the invariants are associated to functors from a poset $P$ to a symmetric monoidal category. In \cite{kim21}, Kim and Mémoli define the notion of \emph{rank invariant} for functors with indices in an arbitrary poset, which allows defining persistence diagrams for any persistence module $F$ over a poset regardless of whether $F$ is interval-decomposable or not. In \cite{divol21}, Divol and Lacombe considered a persistence diagram as a discrete measure, expressing the distance between persistence diagrams as an optimal transport problem. In this context, the authors introduced Radon measures supported on the upper half plane, generalizing the notion of persistence diagrams, and studied the geometric and topological properties of spaces of Radon measures.
Bubenik and Elchesen considered a functor in \cite{bubenik2}, which sends metric pairs to free commutative pointed metric monoids and studied many algebraic properties of such a functor.

In \cite{bubenik-hartsock}, which followed the first version of the present article, Bubenik and Hartsock studied topological and geometric properties of spaces of persistence diagrams and also considered the setting of pairs $(X, A)$. To address the existence of optimal matchings and geodesics, non-negative curvature in the sense of Alexandrov, and the Hausdorff and asymptotic dimension of spaces of persistence diagrams, Bubenik and Hartsock require the set $A\subset X$ to be \emph{distance minimizing}, i.e., for all $x\in X$, there exists $a \in A$ such
that $\dist(x, A) = d(x, a)$. This property holds when $X$ is proper and $A$ is closed, which we assume in items  \eqref{it:geodesicity} and \eqref{it:nncurvature} of Theorem~\ref{t:properties_Dp}. The authors of \cite{bubenik-hartsock} also show that $\DD_p(X, A)$ has infinite asymptotic dimension when $X$ is geodesic and proper, $X/A$ is unbounded, and $A$ is distance minimizing. The spaces of Euclidean persistence diagrams on $n\geq 1$ points equipped with the $p$-Wasserstein distance, $1\leq p\leq \infty$, have finite asymptotic dimension and therefore admit a coarse embedding into a Hilbert space (see \cite{MitraVirk}). On the other hand, the space of Euclidean persistence diagrams on finitely many points equipped with the bottleneck distance has infinite asymptotic dimension \cite[Corollary 27]{bubenik-hartsock} and  cannot be coarsely embedded into a Hilbert space (see \cite[Theorem 4.3]{MitraVirk}). Bubenik and Hartsock have extended these results to metric pairs in \cite{bubenik-hartsock}. Carri\`ere and Bauer have studied the Assouad dimension and bi-Lipschitz embeddings of spaces of finite persistence diagrams in \cite{carriere_et_al:LIPIcs.SoCG.2019.21}. More recently, Bate and Garcia-Pulido have shown in \cite{bate2024bilipschitz} that the space of persistence barcodes with at most
$m$-points can be bi-Lipschitz embedded into $\ell_2$. They point out that their results also hold for generalized persistence diagrams as considered in the present article whenever $X = \overline{\Omega}$, $A=\partial \Omega$, and $\Omega$ is a proper, open subset of $\mathbb{R}^n$.

With respect to Fr\'echet means of probability measures defined on the spaces of persistence diagrams, Divol and Lacombe in \cite{divol21} investigated the existence of such Fr\'echet means for probability measures defined on the space of persistence measures in $(\R^2,\Delta)$ equipped with the optimal partial transport, which in particular contain the spaces $\DD_p(\R^2,\Delta)$. Our results, although more particular in the hypotheses that we impose on the probability measures considered, are more general with respect to the spaces they are defined on.

\subsection*{Organization.} Our article is organized as follows. In Section~\ref{S:PRELIMINARIES}, we present the background on metric pairs, metric monoids, and Alexandrov spaces, and introduce the functor $\DD_p$. In Section~\ref{S:CONTINUITY}, we define Gromov--Hausdorff convergence for metric pairs and prove Theorem~\ref{t:continuity}. The proofs of items \eqref{it:completeness} and \eqref{it:separability} in Theorem~\ref{t:properties_Dp} and of Theorem~\ref{t:frechet means} follow, with minor modifications, along the same lines as those for the corresponding statements in the Euclidean case. For the sake of completeness, we have included a full treatment of these results in Appendix~\ref{app:completeness.separability.frechet.means}. 
In Section \ref{s:geodesicity}, we analyze the geodesicity of the spaces $\DD_p(X,A)$ and prove item \eqref{it:geodesicity} of Theorem \ref{t:properties_Dp}. In Section \ref{s:curvature_bounds}, we analyze the existence of lower curvature bounds for our spaces of persistence diagrams and prove item~\eqref{it:nncurvature} of Theorem~\ref{t:properties_Dp}. In Section~\ref{S:SPACES_OF_DIRECTIONS}, we make some remarks about their local structure. 
Finally, in Section~\ref{s:Euclidean diagrams} we specialize our constructions to the spaces of Euclidean persistence diagrams, which include the classical  space of persistence diagrams, and prove Theorem~\ref{t:dimensions} 
(cf.\ Corollary~\ref{C:INFINITE_DIMENSIONS}).

\begin{ack}
Luis Guijarro and Ingrid Amaranta Membrillo Solis would like to thank the Department of Mathematical Sciences at Durham University for its hospitality while part of this paper was written. Fernando Galaz-Garc\'ia would like to thank Marvin Karsunky for his help at the initial stages of this project. The authors would like to thank Samir Chowdhury, Tristan Madeleine, and Motiejus Valiunas for their helpful comments on a previous version of this article. Finally, the authors would also like to thank one of the referees for their detailed comments on the manuscript and suggestions on how to improve some of our results in Section~\ref{s:Euclidean diagrams}, leding to the strengthened statement of Theorem~\ref{t:dimensions}.
\end{ack}

% SECTION: PRELIMINARIES
%-----------------------

\section{Preliminaries}
\label{S:PRELIMINARIES}

In this section, we collect preliminary material that we will use in the rest of the article and prove some elementary results on the spaces of persistence diagrams. 
Our primary reference for metric geometry will be \cite{BBI}. 

% SS: METRIC PAIRS
%-----------------

\subsection{Metric pairs}
Let $X$ be a set. A map $d\colon X\times X\to[0,\infty)$ is a \emph{metric} on $X$ if $d$ is symmetric, satisfies the triangle inequality, and is  definite, i.e. $d(x,y) = 0$ if and only if $x=y$. 
A  \textit{pseudometric space} is defined similarly;  while keeping the other properties, and still requiring that $d(x,x) = 0$ for all $x\in X$, we allow for points $x,y$ in $X$ with $x\neq y$ and $d(x,y)=0$, in which case $d$ is a \textit{pseudometric}. We  obtain \textit{extended metric} and \textit{extended pseudometric spaces} if we allow for $d$ to take the value $\infty$. Note that when $d$ is a pseudometric, points at distance zero from each other give a partition of $X$, and $d$ induces a metric in the corresponding  quotient set.

Let $(X,d_X)$, $(Y,d_Y)$ be two extended pseudometric spaces. A \emph{Lipschitz map} $f\colon X\to Y$ with \emph{Lipschitz constant} $C$ is a map such that $d_Y(f(x),f(x'))\leq C\cdot d_X(x,x')$ for all $x,x'\in X$ and $y,y'\in Y$. 

% DEF: METRIC PAIRS

\begin{defn}\label{d:metric pairs}
Let $\mathsf{Met_{Pair}}$ denote the category of metric pairs, whose objects,  $\text{Obj}(\mathsf{Met_{Pair}})$, are pairs $(X,A)$ such that $(X,d_X)$ is a metric space and $A\subseteq X$ is closed and non-empty, and whose morphisms, $\text{Hom}(\mathsf{Met_{Pair}})$, are \emph{relative Lipschitz maps}, i.e.\ Lipschitz maps $f\colon (X,A)\to(Y,B)$ such that $f(A)\subseteq B$.  When $A$ is a point, we will talk about \emph{pointed metric spaces} and \emph{pointed Lipschitz maps}, i.e.\ Lipschitz maps $f\colon (X,\{x\})\to(Y,\{y\})$ such that $f(x)=y$. We will denote the category of pointed metric spaces by $\mathsf{Met}_*$.
Similarly, we define the category $\mathsf{PMet_{*}}$ of pointed pseudometric spaces, whose objects $\text{Obj}(\mathsf{PMet}_{*})$, are pairs $(D,\{\sigma\})$ such that $D$ is a pseudometric space and $\sigma$ is a point in $D$. The morphisms of $\mathsf{PMet}_{*}$ are pointed Lipschitz maps. 
\end{defn}

% SS: COMMUTATIVE MONOIDS
%------------------------

\subsection{Commutative metric monoids and spaces of persistence diagrams}

Some of the definitions and results in this subsection may be found in \cite{bubenik2,bubenik1}. For completeness, we provide full proofs of all the statements. We will denote multisets by using two curly brackets $\mset{\cdot}$ and will usually denote persistence diagrams by Greek letters.

Let $(X,d)$ be a metric space and fix $p\in[1,\infty]$. We define the space $(\widetilde{\DD}(X), \widetilde d_p)$ on $X$ as the set of countable multisets $\mset{x_1,x_2,\dots}$ of elements of $X$ equipped with the  \emph{$p$-Wasserstein pseudometric} $\widetilde d_p$, which is given by 
\begin{equation}
\widetilde d_p(\widetilde\sigma,\widetilde\tau)^p=\inf_{\phi\colon \widetilde\sigma \to \widetilde\tau} \sum_{x \in \widetilde\sigma} d(x,\phi(x))^p
\end{equation}
if $p < \infty$, and
\begin{equation}
\widetilde d_p(\widetilde\sigma,\widetilde\tau)=\inf_{\phi\colon \widetilde\sigma \to \widetilde\tau} \sup_{x \in \widetilde\sigma} d(x,\phi(x))
\end{equation}
if $p = \infty$,
where $\phi$ ranges over all bijections between $\widetilde\sigma$ and $\widetilde\tau$ in $\widetilde{\DD}(X)$. Here, by convention, we set $\inf \varnothing = \infty$, that is, we have $\widetilde d_p(\widetilde\sigma,\widetilde\tau) = \infty$ whenever $\widetilde\sigma$ and $\widetilde\tau$ do not have the same cardinality. 

The function $\widetilde{d}_p$ defines an extended pseudometric in $\widetilde{\DD}(X)$, since it is clearly non-negative, symmetric, and the triangle inequality may be proved as follows: if $\widetilde{\rho},\widetilde{\sigma},\widetilde{\tau}\in\widetilde{\DD}(X)$ have the same cardinality and $\phi\colon \widetilde{\rho}\to \widetilde{\sigma}$ and $\psi\colon \widetilde{\sigma}\to \widetilde{\tau}$ are bijections, then $\psi\circ \phi\colon \widetilde{\rho}\to \widetilde{\tau}$ is also a bijection and, if $p<\infty$, then
\begin{align*}
    \widetilde d_p(\widetilde \rho,\widetilde \tau) &\leq \left(\sum_{x\in \widetilde\rho} d(x,\psi\circ\phi(x))^p\right)^{1/p}\\
    &\leq \left(\sum_{x\in \widetilde\rho} (d(x,\phi(x))+d(\phi(x),\psi\circ\phi(x)))^p\right)^{1/p}\\
    &\leq \left(\sum_{x\in \widetilde\rho} d(x,\phi(x))^p\right)^{1/p}+\left(\sum_{x\in \widetilde\rho} d(\phi(x),\psi\circ\phi(x))^p\right)^{1/p}\\
    &= \left(\sum_{x\in \widetilde\rho} d(x,\phi(x))^p\right)^{1/p}+\left(\sum_{y\in \widetilde\sigma} d(y,\psi(y))^p\right)^{1/p}.
\end{align*}
Taking the infimum over bijections $\phi$ and $\psi$ we get the claim. If the cardinalities of $\widetilde \rho, \widetilde\sigma, \widetilde\tau$ are not the same, the inequality is trivial, since both sides or just the right-hand side would be infinite. For $p=\infty$ the argument is analogous and easier.

Given two multisets $\widetilde\sigma$ and $\widetilde\tau$, we define their \emph{sum} $\widetilde\sigma+\widetilde\tau$ to be their disjoint union. We can make $\widetilde{\DD}(X)$ into a commutative monoid with monoid operation given by taking sums of multisets, and with identity $\widetilde{\sigma}_{\varnothing}$ the empty multiset. It is easy to check that $\widetilde{d}_p$ is \emph{(left-)contractive}, that is, $\widetilde{d}_p(\widetilde\sigma,\widetilde\tau) \geq \widetilde{d}_p(\widetilde\rho+\widetilde\sigma,\widetilde\rho+\widetilde\tau)$ for all $\widetilde\sigma,\widetilde\tau,\widetilde\rho \in \widetilde{\DD}(X)$.

From now on, let $(X,A)\in \mathsf{Met_{Pair}}$. Given $\widetilde\sigma,\widetilde\tau \in \widetilde{\DD}(X)$, we write $\widetilde\sigma \sim_A \widetilde\tau$ if there exist $\widetilde\alpha,\widetilde\beta \in \widetilde{\DD}(A)$ such that $\widetilde\sigma+\widetilde\alpha = \widetilde\tau+\widetilde\beta$. It is easy to verify that $\sim_A$ defines an equivalence relation on $\widetilde{\DD}(X)$ such that, if $\widetilde{\alpha}_1\sim_A \widetilde{\alpha}_2$ and $\widetilde{\beta}_1\sim_A \widetilde{\beta}_2$, then $\widetilde{\alpha}_1+\widetilde{\beta}_1 \sim_A\widetilde{\alpha}_2+\widetilde{\beta}_2$, i.e. $\sim_A$ is a \textit{congruence relation} on $\widetilde{\DD}(X)$ (see, for example, \cite[p.\ 27]{hungerford}).
We denote by $\DD(X,A)$ the quotient set $\widetilde{\DD}(X)/{\sim_A}$. Given $\widetilde\sigma \in \widetilde{\DD}(X)$, we write $\sigma$ for the equivalence class of $\widetilde\sigma$ in $\DD(X,A)$. Note that $\widetilde \sigma \sim_A \widetilde \tau$ if and only if $\widetilde \sigma \setminus A = \widetilde \tau \setminus A$, that is, $\widetilde\sigma$ and $\widetilde\tau$ share the same points with the same multiplicities outside $A$. The monoid operation on $\widetilde{\DD}(X)$ induces a monoid operation on $\DD(X,A)$ by defining $ \sigma + \tau $ as the congruence class corresponding to $\widetilde\sigma + \widetilde\tau$.

The function $\widetilde{d}_p$ on $\widetilde{\DD}(X)$ induces a non-negative function $d_p\colon \DD(X,A) \times \DD(X,A) \to [0,\infty]$ defined by
\begin{equation}
d_p(\sigma,\tau)= \inf_{\widetilde\alpha,\widetilde\beta\in\widetilde{\DD}(A)}\widetilde d_p(\widetilde\sigma+\widetilde\alpha,\widetilde\tau+\widetilde\beta).
\end{equation}
Note that $d_p$ is also contractive, that is, $d_p(\sigma,\tau) \geq d_p(\rho+\sigma,\rho+\tau)$ for all $\sigma,\tau,\rho \in \DD(X,A)$.
\begin{defn}
The \emph{space of $p$-persistence diagrams} on the pair $(X,A)$, denoted by $\DD_p(X,A)$, is the set of all $\sigma \in \DD(X,A)$ such that $d_p(\sigma,\sigma_\varnothing) < \infty$.
\end{defn}

% LEMMA

\begin{lem}
If $\widetilde\sigma \in \widetilde{\DD}(X)$ is a finite multiset, then $\sigma \in \DD_p(X,A)$.
\end{lem}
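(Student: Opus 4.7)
My plan is to produce an explicit pair $(\widetilde\alpha,\widetilde\beta)$ of multisets in $\widetilde{\DD}(A)$ that witness a finite value of $\widetilde d_p(\widetilde\sigma+\widetilde\alpha,\widetilde{\sigma}_\varnothing+\widetilde\beta)$, which then yields $d_p(\sigma,\sigma_\vn)<\infty$ by the definition of $d_p$. The only subtlety to keep in mind is the convention $\inf\vn=\infty$, meaning that any candidate pair must be chosen so that $\widetilde\sigma+\widetilde\alpha$ and $\widetilde{\sigma}_\varnothing+\widetilde\beta$ have the same cardinality, so that at least one bijection between them exists.

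Concretely, write $\widetilde\sigma=\mset{x_1,\dots,x_n}$ for some $n\geq 0$, and, using the non-emptiness of $A$, pick an arbitrary point $a\in A$. I would then take $\widetilde\alpha=\widetilde{\sigma}_\varnothing$ and $\widetilde\beta=\mset{a,a,\dots,a}$ with $n$ copies of $a$; both lie in $\widetilde{\DD}(A)$ since $a\in A$. Then $\widetilde\sigma+\widetilde\alpha=\widetilde\sigma$ and $\widetilde{\sigma}_\varnothing+\widetilde\beta=\widetilde\beta$ are finite multisets of the same cardinality $n$, so bijections between them exist.

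To finish, I would evaluate $\widetilde d_p$ on the identity-style bijection $x_i\mapsto a$. For $p<\infty$ this gives
\[
\widetilde d_p(\widetilde\sigma,\widetilde\beta)^{p}\;\leq\;\sum_{i=1}^{n} d_X(x_i,a)^{p},
\]
a finite sum of finite real numbers, hence finite; for $p=\infty$ it gives $\widetilde d_\infty(\widetilde\sigma,\widetilde\beta)\leq \max_{1\leq i\leq n} d_X(x_i,a)$, which is also finite (the convention $\max\vn=0$ handles $n=0$). By the definition of $d_p$, this bounds $d_p(\sigma,\sigma_\vn)$ from above by a finite quantity, so $\sigma\in\DD_p(X,A)$, as required.

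I do not expect any genuine obstacle here; the only point of care is matching cardinalities so as not to trigger $\inf\vn=\infty$, which is precisely why I pad the empty diagram with $n$ copies of a single basepoint of $A$ rather than, say, leaving both $\widetilde\alpha$ and $\widetilde\beta$ empty.
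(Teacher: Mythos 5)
Your proof is correct and matches the paper's argument essentially verbatim: both pick a point $a\in A$, pad the empty diagram with $n$ copies of $a$ to match cardinalities, and bound $d_p(\sigma,\sigma_\vn)$ by the (finite) cost of the resulting bijection. Your explicit write-out of the cost of the bijection $x_i\mapsto a$ is just a slightly more detailed version of the same step.
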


\begin{proof}
Let $\widetilde\sigma \in\widetilde{\DD}(X)$ be a multiset of cardinality $k < \infty$. Since $A\subset X$ is non-empty, we can pick an element $a \in A$, and so there exists a multiset $k\mset{a} = \mset{a,\ldots,a} \in \widetilde{\DD}(A)$ of cardinality $k$. Therefore, there exists a bijection between the finite multisets $\widetilde\sigma$ and $k\mset{a} = \widetilde{\sigma}_\varnothing+k\mset{a}$, implying that $d_p(\sigma,\sigma_\varnothing) \leq \widetilde{d}_p(\widetilde\sigma,\widetilde{\sigma}_\varnothing+k\mset{a}) < \infty$.
\end{proof}

\begin{lem}\label{lem:pair}
The following assertions hold:
\begin{enumerate}
\item  \label{it:extendedmetric} 
If $p=\infty$, then the function $d_p$ is an extended pseudometric on $\DD(X,A)$ and a pseudometric on $\DD_p(X,A)$.

\item \label{it:metric} If $p<\infty$, then the function $d_p$ is an extended metric on $\DD(X,A)$ and a metric on $\DD_p(X,A)$.

\end{enumerate}
\end{lem}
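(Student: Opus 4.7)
The plan is to verify the pseudometric and metric axioms for $d_p$ in stages, deducing each from the corresponding fact about $\widetilde{d}_p$ together with the congruence $\sim_A$, and to address the definiteness separately in the case $p<\infty$.

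First I would check that $d_p$ is well-defined on the quotient $\DD(X,A)$ and symmetric. If $\widetilde\sigma_1+\widetilde\gamma=\widetilde\sigma_2+\widetilde\delta$ with $\widetilde\gamma,\widetilde\delta\in\widetilde{\DD}(A)$, then the left-invariance of $\widetilde{d}_p$ shows that the infimum in the definition of $d_p([\widetilde\sigma_1],[\widetilde\tau])$ over pairs $(\widetilde\alpha,\widetilde\beta)$ of elements of $\widetilde{\DD}(A)$ coincides with the corresponding infimum for $[\widetilde\sigma_2]$, after the relabeling $(\widetilde\alpha,\widetilde\beta)\leftrightarrow(\widetilde\alpha+\widetilde\gamma,\widetilde\beta+\widetilde\delta)$; an identical argument works on the second slot. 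Symmetry is immediate from the symmetry of $\widetilde{d}_p$ and the interchangeability of $\widetilde\alpha$ and $\widetilde\beta$, and taking $\widetilde\alpha=\widetilde\beta=\widetilde{\sigma}_\varnothing$ gives $d_p(\sigma,\sigma)=0$.

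Next, for the triangle inequality, given $\varepsilon>0$ I would choose near-optimal pairs $(\widetilde\alpha_1,\widetilde\beta_1)$ and $(\widetilde\alpha_2,\widetilde\beta_2)$ for $d_p(\sigma,\tau)$ and $d_p(\tau,\rho)$. Using left-invariance, shifting the first pair by $\widetilde\alpha_2$ and the second by $\widetilde\beta_1$ produces matchings whose target and source, respectively, coincide with the common multiset $\widetilde\tau+\widetilde\beta_1+\widetilde\alpha_2$. The triangle inequality for $\widetilde{d}_p$---proved by composing bijections and invoking either Minkowski's inequality when $p<\infty$ or the sup-norm triangle inequality when $p=\infty$---then yields $d_p(\sigma,\rho)\leq d_p(\sigma,\tau)+d_p(\tau,\rho)+\varepsilon$, and letting $\varepsilon\to 0$ completes the step. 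As a consequence, $d_p$ takes finite values on $\DD_p(X,A)$ via $d_p(\sigma,\tau)\leq d_p(\sigma,\sigma_\varnothing)+d_p(\sigma_\varnothing,\tau)<\infty$. At this point item~(1) is complete, and item~(2) is complete except for the definiteness of $d_p$.

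The main obstacle is the definiteness for $p<\infty$: the task is to deduce $\widetilde\sigma\sim_A\widetilde\tau$ from $d_p(\sigma,\tau)=0$, which is equivalent to showing that the sub-multisets of $\widetilde\sigma$ and $\widetilde\tau$ supported in $X\setminus A$ coincide. The key observation I would establish first is that $\sigma\in\DD_p(X,A)$ forces a local finiteness property away from $A$: for every $\delta>0$ only finitely many points of a representative $\widetilde\sigma$ lie outside the $\delta$-neighborhood of $A$, because each such point contributes at least $\delta^p$ to any matching with $\sigma_\varnothing$ of finite cost. Consequently, for each $x\in X\setminus A$ and every $r<d(x,A)/2$, the ball $B(x,r)$ meets $\widetilde\sigma\cup\widetilde\tau$ in a finite sub-multiset disjoint from $A$. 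Given a sequence $\phi_n$ of matchings whose total $\widetilde{d}_p$-cost tends to $0$, each individual distance $d(y,\phi_n(y))$ is bounded by the total cost and hence tends to $0$, so for $n$ sufficiently large $\phi_n$ must send each copy of $x$ in $\widetilde\sigma$ into a ball around $x$ small enough to exclude both $A$ and the finitely many other points of $\widetilde\tau$ near $x$; thus the image is forced to be a copy of $x$ in $\widetilde\tau$. Injectivity of $\phi_n$ then yields $m_{\widetilde\sigma}(x)\leq m_{\widetilde\tau}(x)$, and swapping the roles of $\widetilde\sigma$ and $\widetilde\tau$ gives equality for all $x\in X\setminus A$, hence $\widetilde\sigma\sim_A\widetilde\tau$. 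This step is precisely where $p=\infty$ must be excluded, since for the supremum norm the passage from small total cost to small individual cost fails, and indeed $d_\infty$ is in general only a pseudometric.
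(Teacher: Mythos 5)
Your treatment of the pseudometric axioms (well-definedness on the quotient, symmetry, the triangle inequality via commutativity and left-invariance of $\widetilde d_p$, and finiteness on $\DD_p(X,A)$ by comparison with $\sigma_\varnothing$) is essentially identical to the paper's argument. Where you genuinely diverge is the definiteness step for $p<\infty$. The paper argues directly from a single witness: it picks one point $u\in X\setminus A$ whose multiplicities in $\widetilde\sigma$ and $\widetilde\tau$ differ, sets $\varepsilon_1=\inf\{d(u,v):v\in\widetilde\tau,\ v\neq u\}$ and $\varepsilon_2\leq d(u,A)$, and observes that every bijection $\widetilde\sigma+\widetilde\alpha\to\widetilde\tau+\widetilde\beta$ must move some copy of $u$ by at least $\min\{\varepsilon_1,\varepsilon_2\}$, yielding the quantitative bound $d_p(\sigma,\tau)\geq\varepsilon>0$. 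You instead argue contrapositively with a sequence of near-optimal matchings, using local finiteness away from $A$ to show the matchings eventually fix every off-diagonal point, and deduce equality of all multiplicities. Both routes rest on the same two facts --- off-diagonal points of a $p$-summable diagram are isolated within the diagram and lie at positive distance from $A$ --- so your version is correct; it proves more than needed (agreement of every multiplicity rather than a contradiction from one discrepancy) in exchange for losing the explicit lower bound on $d_p(\sigma,\tau)$. One caveat you share with the paper: your isolation lemma invokes $d_p(\sigma,\sigma_\varnothing)<\infty$, so as written your definiteness argument establishes only the ``metric on $\DD_p(X,A)$'' half of item (2), not the ``extended metric on $\DD(X,A)$'' half, where diagrams may lie at infinite distance from $\sigma_\varnothing$ and the local finiteness genuinely fails (a multiset accumulating at a point $u\in X\setminus A$ sufficiently fast, versus the same multiset with one extra copy of $u$, gives distinct classes at $d_p$-distance zero). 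The paper's own proof has the same restriction hidden in its claim that $\varepsilon_1>0$, so this is not a defect specific to your write-up, but it is worth flagging that neither argument covers the full statement on $\DD(X,A)$.
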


\begin{proof}
We will first show that $d_p$, $1\leq p\leq \infty$, is an extended pseudometric. We will then show that, for $p<\infty$, the function $d_p$ is an extended metric.

It is clear that, for all $p\in [1,\infty]$, the function  $d_p$ is symmetric, non-negative, and  $d_p(\sigma,\sigma)=0$ for all $\sigma\in \DD(X,A)$.  
The triangle inequality follows from the facts that $\widetilde{\DD}(X)$ is commutative and that $\widetilde{d}_p$ is contractive. More precisely, fix $\widetilde\rho,\widetilde\sigma,\widetilde\tau \in \widetilde{\DD}(X)$, and let $\varepsilon > 0$. By the definition of $d_p$, there exist $\widetilde\alpha, \widetilde\beta, \widetilde\gamma, \widetilde\delta \in \widetilde{\DD}(A)$ such that $\widetilde{d}_p(\widetilde\rho+\widetilde\alpha,\widetilde\sigma+\widetilde\beta) \leq d_p(\rho,\sigma)+\varepsilon$ and $\widetilde{d}_p(\widetilde\sigma+\widetilde\gamma,\widetilde\tau+\widetilde\delta) \leq  d_p(\sigma,\tau)+\varepsilon$. Using the commutativity of $\widetilde{\DD}(X)$, the contractivity of $\widetilde{d}_p$, and the triangle inequality for $\widetilde{d}_p$, we get
\begin{align*}
d_p(\rho,\tau) &\leq \widetilde{d}_p(\widetilde\rho+\widetilde\alpha+\widetilde\gamma,\widetilde\tau+\widetilde\beta+\widetilde\delta)
\\ &\leq \widetilde{d}_p(\widetilde\rho+\widetilde\alpha+\widetilde\gamma,\widetilde\sigma+\widetilde\beta+\widetilde\gamma) + \widetilde{d}_p(\widetilde\sigma+\widetilde\beta+\widetilde\gamma,\widetilde\tau+\widetilde\beta+\widetilde\delta)
\\ &\leq \widetilde{d}_p(\widetilde\rho+\widetilde\alpha,\widetilde\sigma+\widetilde\beta) + \widetilde{d}_p(\widetilde\sigma+\widetilde\gamma,\widetilde\tau+\widetilde\delta)
\\ & \leq d_p(\rho,\sigma) + d_p(\sigma,\tau) + 2\varepsilon.
\end{align*}
Our choice of $\varepsilon > 0$ was arbitrary, implying that $d_p(\rho,\tau) \leq d_p(\rho,\sigma) + d_p(\sigma,\tau)$, as required. Hence, $d_p$ is an extended pseudometric on $\DD(X,A)$. By the triangle inequality, $d_p$ is a pseudometric on $\DD_p(X,A)$. Indeed, if $\sigma,\tau \in \DD_p(X,A)$, then $d_p(\sigma,\tau) \leq d_p(\sigma,\sigma_\varnothing) + d_p(\tau,\sigma_\varnothing) < \infty$. This completes the proof of part \eqref{it:extendedmetric}.

Now, we prove part \eqref{it:metric}. Fix $p<\infty$ and let $\widetilde\sigma,\widetilde\tau \in \widetilde{\DD}(X)$ be multisets such that $\sigma\neq\tau$. It then follows that there exists a point $u \in X \setminus A$ which appears in $\widetilde\sigma$ and $\widetilde\tau$ with different multiplicities (which includes the case when it has multiplicity 0 in one of the diagrams and positive multiplicity in the other). Without loss of generality, suppose that $u$ appears with higher multiplicity in $\widetilde\sigma$. Now let $\varepsilon_1 = \inf \{ d(u,v) : v \in \widetilde\tau, v \neq u \}$. Observe that $\varepsilon_1>0$ since, otherwise, there would be a sequence of points in $\widetilde\tau$ converging to $u$ in $X$, which in turn would imply that $d_p(\tau,\sigma_\vn)=\infty$. Let $\varepsilon_2 > 0$ be such that $d(u,a) \geq \varepsilon_2$ for all $a \in A$, which exists since $u \in X \setminus A$ and $X \setminus A$ is open in $X$. We set $\varepsilon = \min\{\varepsilon_1,\varepsilon_2\}$. Now, for any $\widetilde\alpha,\widetilde\beta \in \widetilde{\DD}(A)$, if $\phi\colon \widetilde\sigma+\widetilde\alpha \to \widetilde\tau+\widetilde\beta$ is a bijection, then $\phi$ must map some copy of $u \in \widetilde\sigma$ to a point $v \in \widetilde\tau+\widetilde\beta$ with $d(u,v) \geq \varepsilon$, implying that $\widetilde{d}_p(\widetilde\sigma+\widetilde\alpha,\widetilde\tau+\widetilde\beta) \geq \varepsilon$. By taking the infimum over all $\widetilde\alpha,\widetilde\beta \in \widetilde{\DD}(A)$, it follows that $d_p(\sigma,\tau) \geq \varepsilon > 0$, as required. This shows that $d_p$ is an extended metric on $\DD(X,A)$. The triangle inequality implies, as in part \eqref{it:extendedmetric}, that $d_p$ is a metric on $\DD_p(X,A)$. This completes the proof of part \eqref{it:metric}.
\end{proof}

For $p<\infty$, the metric $d_p$ is the \textit{$p$-Wasserstein metric}. The following example shows that, for $p=\infty$, the function $d_p$ is not a metric, only a pseudometric.

% EXAMPLE

\begin{example}
Let $(X,A)$ be a metric pair such that there exists a sequence $\{x_n\}_{n\in\mathbb{N}}$ of different points which converges to some $x_\infty\in X\setminus A$ and $x_\infty\neq x_n$ for all $n\in\mathbb{N}$. Then the multisets $\widetilde\sigma = \mset{x_n:n\in\NN}$ and $\widetilde\tau = \mset{x_n:n\in\NN}\cup\{x_\infty\}$ induce diagrams $\sigma,\tau\in\DD_\infty(X,A)$ such that $\sigma \neq \tau$ and $d_\infty(\sigma,\tau)=0$ as can be seen considering the sequence of bijections $\phi_n\colon\widetilde\sigma\to \widetilde\tau$ given by 
\[
\phi_n(x_i) = \begin{cases}
x_i & \text{if } i< n\\
x_{i-1} & \text{if } i > n\\
x_\infty & \text{if } i = n
\end{cases}.
\]
Thus $\DD_\infty(X,A)$ is a pseudometric space but not a metric space.
\end{example}

From now on, unless stated otherwise, we will only consider metric pairs $(X,A)$ where $X$ is a metric space. Also, for the sake of simplicity, we will treat elements in $\DD_p(X,A)$ as multisets, with the understanding that whenever we do so we are actually dealing with representatives of such elements in $\widetilde \DD(X)$. Thus, for instance, we will consider things like $x\in \sigma$ for $\sigma\in \DD_p(X,A)$ or bijections $\phi\colon \sigma\to\tau$ for $\sigma,\tau\in \DD_p (X,A)$, meaning there are representatives $\widetilde\sigma$ and $\widetilde\tau$ and a bijection $\widetilde\phi\colon \widetilde\sigma\to \widetilde\tau$. We point out that the constructions discussed above can be carried out for extended pseudometric spaces with straightforward adjustments.

Given two metric pairs $(X,A)$ and $(Y,B)$, their disjoint union is the space $(X\sqcup Y, A\sqcup B)$. We can form the extended pseudometric space 
$(X\sqcup Y, d_{X\sqcup Y})$, 
where $d_{X\sqcup Y}|_{(X\times X)}= d_X$, $d_{X\sqcup Y}|_{(Y\times Y)}= d_Y$ and  $d_{X\sqcup Y}(x,y)= \infty$ for all $x\in X$ and $y\in Y$.
The following result is an immediate consequence of the definition of the space $(\DD_p(X,A),d_p)$. 
% PROP

\begin{prop}
If $(X,A)$ and $(Y,B)$ are metric pairs, then
\[
    \DD_p(X\sqcup Y,A\sqcup B) = \DD_p(X,A)\times_p \DD_p(Y,B),
\]
     where $U\times_p V$ denotes the space $U\times V$ endowed with the metric 
     \[
     d_U\times_p d_V((u_1,v_1),(u_2,v_2)) = \left(d_U(u_1,u_2)^p + d_V(v_1,v_2)^p\right)^{1/p}
     \]
if $p < \infty$, and 
     \[
     d_U\times_p d_V((u_1,v_1),(u_2,v_2)) = \max\{d_U(u_1,u_2), d_V(v_1,v_2)\}
     \]
if $p = \infty$.
\end{prop}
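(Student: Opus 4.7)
The plan is to define a candidate isometry $\Phi\colon \DD_p(X\sqcup Y, A\sqcup B) \to \DD_p(X,A)\times_p \DD_p(Y,B)$ by splitting each diagram into its $X$-part and $Y$-part, and then to show that optimal matchings in the disjoint-union diagram space must respect this splitting, forcing the Wasserstein computation to decouple.

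First I would define the splitting. Given a representative $\widetilde\sigma\in \widetilde\DD(X\sqcup Y)$, write $\widetilde\sigma = \widetilde\sigma_X + \widetilde\sigma_Y$ where $\widetilde\sigma_X$ (resp.\ $\widetilde\sigma_Y$) consists of the points of $\widetilde\sigma$ lying in $X$ (resp.\ $Y$). Since $A\sqcup B = A \sqcup B$ splits canonically as well, any equivalence $\widetilde\sigma + \widetilde\alpha = \widetilde\sigma' + \widetilde\beta$ with $\widetilde\alpha,\widetilde\beta\in\widetilde\DD(A\sqcup B)$ restricts separately to $X$ and $Y$, so $\sigma\mapsto(\sigma_X,\sigma_Y)$ descends to a well-defined map $\Phi$ on equivalence classes. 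The condition $d_p(\sigma,\sigma_\vn)<\infty$ likewise separates into finiteness of $d_p(\sigma_X,\sigma_\vn)$ and $d_p(\sigma_Y,\sigma_\vn)$ once the isometry is established, so the target is as claimed. Surjectivity is immediate by taking disjoint unions of representatives, and injectivity follows because $\sigma_X = \tau_X$ in $\DD(X,A)$ and $\sigma_Y = \tau_Y$ in $\DD(Y,B)$ together yield witnessing equivalences inside $\DD(X\sqcup Y,A\sqcup B)$.

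The heart of the proof is the isometry statement. The key observation is that $d_{X\sqcup Y}(x,y) = \infty$ whenever $x\in X$ and $y\in Y$, so any bijection $\phi\colon\widetilde\sigma+\widetilde\alpha\to\widetilde\tau+\widetilde\beta$ with finite $p$-cost must map the $X$-points of the source bijectively onto the $X$-points of the target, and similarly for $Y$. Hence any such $\phi$ decomposes as $\phi_X\sqcup\phi_Y$, and the cost functional separates: for $p<\infty$,
\[
\sum_{z\in\widetilde\sigma+\widetilde\alpha} d_{X\sqcup Y}(z,\phi(z))^p \;=\; \sum_{x} d_X(x,\phi_X(x))^p \;+\; \sum_{y} d_Y(y,\phi_Y(y))^p,
\]
while for $p=\infty$ the supremum similarly becomes the maximum of the two $X$- and $Y$-suprema. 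Taking infima over $\widetilde\alpha,\widetilde\beta\in\widetilde\DD(A\sqcup B)$ (which themselves split as $\widetilde\alpha_X+\widetilde\alpha_Y$ with $\widetilde\alpha_X\in\widetilde\DD(A)$ and $\widetilde\alpha_Y\in\widetilde\DD(B)$) and over admissible bijections thus commutes with the splitting, yielding
\[
d_p(\sigma,\tau)^p = d_p(\sigma_X,\tau_X)^p + d_p(\sigma_Y,\tau_Y)^p \qquad (p<\infty),
\]
and the analogous identity with $\max$ for $p=\infty$.

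The main obstacle I anticipate is bookkeeping: one must verify that restricting infima over $\widetilde\DD(A\sqcup B)$ to pairs that already split as $\widetilde\alpha_X+\widetilde\alpha_Y$ does not shrink the infimum, and that bijections of infinite cost can be discarded without affecting either side. Both points follow from the $\infty$-valued cross distances in $d_{X\sqcup Y}$, but stating them cleanly requires a small lemma that every finite-cost matching respects the partition, so I would isolate that step explicitly before invoking it to conclude.
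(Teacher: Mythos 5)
Your proposal is correct and follows essentially the same route as the paper's proof: split each diagram into its $X$-part and $Y$-part and observe that the $p$-Wasserstein cost decouples accordingly. The paper states this decomposition in two lines without justification, whereas you supply the key supporting detail (that the infinite cross-distances in $d_{X\sqcup Y}$ force every finite-cost matching to respect the partition), which is exactly the right lemma to isolate.
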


\begin{proof}
It is clear that for any $\sigma\in \DD_p(X\sqcup Y, A\sqcup B)$, we can write $\sigma = \sigma_{(X,A)}+\sigma_{(Y,B)}$ with $\sigma_{(X,A)}\in \DD_p(X,A)$ and $\sigma_{(Y,B)}\in \DD_p(Y,B)$. Therefore, given $\sigma,\tau\in \DD_p(X\sqcup Y,A\sqcup B)$, we have
\begin{align*}
d_p(\sigma,\tau)^p &= d_p(\sigma_{(X,A)},\tau_{(X,A)})^p+d_p(\sigma_{(Y,B)},\tau_{(Y,B)})^p\\
&= d_p\times_p d_p((\sigma_{(X,A)},\sigma_{(Y,B)}),(\tau_{(X,A)},\tau_{(Y,B)}))^p
\end{align*}
if $p < \infty$, and
\[
d_p(\sigma,\tau) = \max\{d_p(\sigma_{(X,A)},\tau_{(X,A)}), d_p(\sigma_{(Y,B)},\tau_{(Y,B)})\}
\]
if $p = \infty$.
\end{proof}

\begin{rem}
Note that, if we allow $(X,A)$ and $(Y,B)$ to be extended metric pairs, then the disjoint union $(X\sqcup Y,A\sqcup B)$ with the metric $d_{X\sqcup Y}$ defines a coproduct in the category $\mathsf{\overline{Met}_{Pair}}$ of extended metric pairs whose objects are extended metric pairs  and whose morphisms are relative Lipschitz maps (cf.\ Definition~\ref{d:metric pairs}).
\end{rem}

% DEF

\begin{defn}
Given a metric pair $(X,A)$, and a relative map $f\colon (X,A)\to (Y,B)$ (i.e.\ such that $f(A)\subset B$), we define a pointed map $f_*\colon (\DD_p(X,A),\sigma_\varnothing)\to (\DD_p(Y,B),\sigma_\varnothing)$ as follows. Given a persistence diagram $\sigma\in\DD_p(X,A)$,  we let
	\begin{align}
	\label{EQ:INDUCED_MAP}
	f_*(\sigma) = \mset{f(x) : x\in\sigma}.
	\end{align}
\end{defn}

We now define the functor $\DD_p$, which we will study in the remaining sections.

\begin{prop}
\label{P:D_p_IS_FUNCTOR} Consider the map $\DD_p\colon (X,A)\mapsto (\DD_p(X,A),\sigma_\varnothing)$.
\begin{enumerate}
    \item  If $p=\infty$, then $\DD_p$  is a functor from the category $\mathsf{Met_{Pair}}$ of metric pairs equipped with relative Lipschitz maps to the category $\mathsf{PMet}_*$ of pointed pseudometric spaces with pointed Lipschitz maps.
    \item If $p<\infty$, then $\DD_p$  is a functor from the category $\mathsf{Met_{Pair}}$ of metric pairs equipped with relative Lipschitz maps to the category $\mathsf{Met}_*$ of pointed metric spaces with pointed Lipschitz maps.
\end{enumerate}

\end{prop}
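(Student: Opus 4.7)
The proof splits into verifying that $\DD_p$ is well defined on objects and on morphisms, and that it preserves identities and compositions. The object part is already in hand: Lemma~\ref{lem:pair} shows that $(\DD_p(X,A), d_p)$ is a pseudometric space if $p=\infty$ and a metric space if $p<\infty$, pointed in each case by the class $\sigma_\vn$ of the empty multiset. So the real content is to check that the assignment $f\mapsto f_*$ defined by~\eqref{EQ:INDUCED_MAP} produces a morphism in the target category and is compatible with composition.

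First I would verify that $f_*$ descends to the quotient $\DD(X,A)\to \DD(Y,B)$. Given a relative Lipschitz map $f\colon(X,A)\to(Y,B)$ with constant $C$, the naive pushforward $\widetilde{f}_*\colon \widetilde{\DD}(X)\to \widetilde{\DD}(Y)$ defined by $\widetilde{f}_*(\widetilde\sigma)=\mset{f(x):x\in\widetilde\sigma}$ is a monoid homomorphism: $\widetilde{f}_*(\widetilde\sigma+\widetilde\tau)=\widetilde{f}_*(\widetilde\sigma)+\widetilde{f}_*(\widetilde\tau)$ and $\widetilde{f}_*(\widetilde{\sigma}_\vn)=\widetilde{\sigma}_\vn$. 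Since $f(A)\subseteq B$, it sends $\widetilde{\DD}(A)$ into $\widetilde{\DD}(B)$; hence if $\widetilde\sigma+\widetilde\alpha=\widetilde\tau+\widetilde\beta$ with $\widetilde\alpha,\widetilde\beta\in\widetilde{\DD}(A)$, then applying $\widetilde{f}_*$ gives $\widetilde{f}_*(\widetilde\sigma)\sim_B \widetilde{f}_*(\widetilde\tau)$, so $f_*$ is well defined on classes and is a monoid homomorphism.

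Next I would establish the Lipschitz bound $d_p(f_*(\sigma),f_*(\tau))\leq C\,d_p(\sigma,\tau)$. The main observation is that any bijection $\phi\colon\widetilde\sigma\to\widetilde\tau$ induces a bijection $\widetilde{f}_*(\phi)\colon \widetilde{f}_*(\widetilde\sigma)\to\widetilde{f}_*(\widetilde\tau)$ by sending the copy of $f(x)$ coming from $x\in\widetilde\sigma$ to the copy of $f(\phi(x))$ coming from $\phi(x)\in\widetilde\tau$. Using $d_Y(f(x),f(\phi(x)))\leq C\,d_X(x,\phi(x))$ and summing (or taking suprema when $p=\infty$), one obtains $\widetilde{d}_p(\widetilde{f}_*(\widetilde\sigma),\widetilde{f}_*(\widetilde\tau))\leq C\,\widetilde{d}_p(\widetilde\sigma,\widetilde\tau)$. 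Now, for any $\widetilde\alpha,\widetilde\beta\in\widetilde{\DD}(A)$ one has $\widetilde{f}_*(\widetilde\alpha),\widetilde{f}_*(\widetilde\beta)\in\widetilde{\DD}(B)$, so
\[
d_p(f_*(\sigma),f_*(\tau))\;\leq\; \widetilde{d}_p\bigl(\widetilde{f}_*(\widetilde\sigma+\widetilde\alpha),\widetilde{f}_*(\widetilde\tau+\widetilde\beta)\bigr)\;\leq\; C\,\widetilde{d}_p(\widetilde\sigma+\widetilde\alpha,\widetilde\tau+\widetilde\beta).
\]
Taking the infimum over $\widetilde\alpha,\widetilde\beta$ yields the bound. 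Specializing to $\tau=\sigma_\vn$ shows $d_p(f_*(\sigma),\sigma_\vn)\leq C\,d_p(\sigma,\sigma_\vn)<\infty$, so $f_*$ actually lands in $\DD_p(Y,B)$; and $f_*(\sigma_\vn)=\sigma_\vn$ makes it pointed.

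Finally, functoriality is essentially tautological: for $\mathrm{id}_{(X,A)}$ the induced map on multisets fixes every element, so $(\mathrm{id}_{(X,A)})_*=\mathrm{id}_{\DD_p(X,A)}$; and for a composable pair $f\colon(X,A)\to(Y,B)$, $g\colon(Y,B)\to(Z,C)$ one has $(g\circ f)_*(\sigma)=\mset{g(f(x)):x\in\sigma}=g_*(f_*(\sigma))$. The step that requires the most care is the Lipschitz estimate, since one must pass through the infima defining $d_p$ on the quotient while using that $f$ sends $A$ into $B$; once the right induced bijection is identified, however, everything reduces to the elementary pointwise Lipschitz bound for $f$.
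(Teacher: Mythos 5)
Your proof is correct and follows essentially the same route as the paper's: the induced bijection on pushed-forward multisets together with the pointwise Lipschitz bound $d_Y(f(x),f(\phi(x)))\leq C\,d_X(x,\phi(x))$, summed (or sup'd) over the multiset, gives the $C$-Lipschitz estimate, membership of $f_*(\sigma)$ in $\DD_p(Y,B)$ follows by comparing with $\sigma_\vn$, and preservation of identities and composition is immediate. You are somewhat more careful than the paper in explicitly checking that $f_*$ descends through the congruence $\sim_A$ and in separating the infimum over bijections from the infimum over the added diagonal multisets $\widetilde\alpha,\widetilde\beta$; the paper absorbs both points into its stated convention of working directly with multiset representatives.
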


\begin{proof}
Consider a $C$-Lipschitz relative map $f\colon (X,A)\to (Y,B)$, i.e. $d_Y(f(x),f(y))\leq C d_X(x,y)$ holds for all $x,y\in X$ for some $C>0$. We will prove that the pointed map
$f_*$, defined in \eqref{EQ:INDUCED_MAP}, restricts to a $C$-Lipschitz map
$\DD_p(X,A) \to \DD_p(Y,B)$.

First, given $\sigma\in \DD_p(X,A)$ a $p$-diagram, we need to prove that $f_*(\sigma)\in \DD_p(Y,B)$. For any $\sigma$, we have
\[
d_p(f_*(\sigma),\sigma_\vn)^p = \sum_{x\in {\sigma}} d_Y(f(x),B)^p \leq \sum_{x\in {\sigma}}d_Y(f(x),f(a_x))^p\leq C^p\sum_{x\in {\sigma}}d_X(x,a_x)^p
\]
for any choice $\{a_x\}_{x\in \sigma}\subset A$. Since this choice is arbitrary, 
\[
d_p(f_*(\sigma),\sigma_\vn)^p \leq C^p\sum_{x\in \sigma}d_X(x,A)^p =C^pd_p(\sigma,\sigma_\vn)< \infty.
\]

Now consider two diagrams $\sigma,\sigma'\in\DD_p(X,A)$. Observe that, if $\phi\colon \sigma\to \sigma'$ is a bijection, then it induces a bijection $f_*\phi\colon f_*(\sigma)\to f_*(\sigma')$ given by $f_*\phi(y)=f(\phi(x))$ whenever $y=f(x)$ for some $x\in \sigma$. Therefore
\[
d_p(f_*(\sigma),f_*(\sigma'))^p \leq \sum_{y\in {f_*(\sigma)}} d(y,f_*\phi(y))^p = \sum_{x\in \sigma} d(f(x),f(\phi(x)))^p\leq C^p\sum_{x\in\sigma} d(x,\phi(x))^p.
\]
Since $\phi\colon \sigma\to \sigma'$ is an arbitrary bijection, we get that
\[
d_p(f_*(\sigma),f_*(\sigma'))\leq C d_p(\sigma,\sigma').
\] Thus, $f_*\colon\DD_p(X,A)\to \DD_p(Y,B)$ is $C$-Lipschitz.

Now consider two relative Lipschitz maps $f\colon (X,A)\to (Y,B)$ and $g\colon (Y,B)\to (Z,C)$. Let $\sigma\in\DD_p(X,A)$. Then 
\begin{align*}
(g\circ f)_*(\sigma)=\mset{g\circ f(x) : x\in\sigma}=g_*(\mset{f(x) : x\in\sigma})=g_*\circ f_*(\sigma).
\end{align*}
Thus, $(g\circ f)_*=g_*\circ f_*$.

Finally, if $\Id\colon (X,A)\to (X,A)$ is the identity map, it is clear that $\Id_*\colon \DD_p(X,A)\to\DD_p(X,A)$ is also the identity map. Thus, $\DD_p$ defines a functor.
\end{proof}

% REMARK

\begin{rem}
Note that we could have proved that $\DD_p$ defines a functor on the category of metric spaces equipped with isometries or even bi-Lipschitz maps. However, Proposition~\ref{P:D_p_IS_FUNCTOR} is  more general.
\end{rem}

\begin{rem}
Proposition~\ref{P:D_p_IS_FUNCTOR} implies that, if $(X,A)$ is a metric pair and $(g,x)\mapsto g\cdot x$ is an action of a group $G$ on $(X,A)$ via relative bi-Lipschitz maps, then we get an action of $G$ on $\DD_p(X,A)$ given by
\[
g\cdot \sigma= \mset{g\cdot a:a\in\sigma}.
\] 
Observe that the Lipschitz constant of the bi-Lipschitz maps in the group action is preserved by the functor $\DD_p$. Hence, if $G$ acts by relative isometries on $(X,A)$ (i.e., by isometries $f\colon X\to X$ such that $f(A)\subseteq A$) then so does the induced action on $\DD_p(X,A)$. 
\end{rem}

% REMARK

\begin{rem}
We point out that  $\DD_p$ is, in fact, a functor from $\MetPair$ to $\monmet$, the category of commutative pointed metric monoids (see \cite{bubenik2}).
In this case, given a map $f\colon(X,A)\to(Y,B)$, the induced map $f_*\colon\DD_p(X,A)\to \DD_p(Y,B)$ is a monoid homomorphism. Composing the functor $\DD_p$ with the forgetful functor one obtains the map to $\MetPair$. In this work we consider this last composition, since we are mainly focused on the metric properties of the spaces $\DD_p(X,A)$,
and leave the study of the algebraic properties of the monoids $\DD_p(X,A)$ for future work.
\end{rem}

Consider now the quotient metric space $X/A$, namely, the quotient space induced by the partition $\{\{x\}:x\in X\setminus A\}\sqcup \{A\}$ endowed with the metric given by
\[
d([x],[y]) = \min\{d(x,y),d(x,A)+d(y,A)\}
\]
for any $x,y\in X$ (cf.\ \cite[Ch.~2, \S 22]{mun} and \cite[Definition 3.1.12]{BBI}). It follows from \cite[Remark 4.14 and Lemma 4.24]{bubenik1} that $\DD_p(X,A)$ and $\DD_p(X/A,[A])$  are isometrically isomorphic. We have the following commutative diagrams of functors. For $p=\infty$,
\begin{equation}
\label{EQ:COMM_DIAG_funct_infty}
\begin{tikzcd}
\mathsf{Met_{Pair}} \arrow{r}{\DD_\infty} \arrow[swap]{d}{\QQ} & \mathsf{PMet}_* \arrow{d}{\cong} \\
\mathsf{Met}_* \arrow{r}{\DD_\infty} & \mathsf{PMet}_*
\end{tikzcd}
\end{equation}
and, for $p<\infty$,
\begin{equation}
\label{EQ:COMM_DIAG_funct}
\begin{tikzcd}
\mathsf{Met_{Pair}} \arrow{r}{\DD_p} \arrow[swap]{d}{\QQ} & \mathsf{Met}_* \arrow{d}{\cong} \\
\mathsf{Met}_* \arrow{r}{\DD_p} & \mathsf{Met}_*
\end{tikzcd}
\end{equation}
both given by
\begin{equation*}
\begin{tikzcd}
(X,A) \arrow[maps to]{r}{} \arrow[swap,maps to]{d}{} & (\DD_p(X,A),\sigma_\varnothing) \arrow[maps to]{d}{} \\
(X/A,[A]) \arrow[maps to]{r}{} & (\DD_p(X/A,[A]),\sigma_\varnothing) 
\end{tikzcd}.
\end{equation*}
Observe that the map $\DD_p(X,A)\mapsto \DD_p(X/A,[A])$ is a natural isomorphism. Therefore,  diagrams~\eqref{EQ:COMM_DIAG_funct} and \eqref{EQ:COMM_DIAG_funct_infty} show that the functor $\DD_p$ factors through the quotient functor $\QQ\colon (X,A)\mapsto (X/A,[A])$ and the functor $(X/A,[A])\mapsto \DD_p(X/A,[A])$ for $p\in [1,\infty]$.

% REMARK

\begin{rem}
Note that we also have the following commutative diagrams of functors. For $p=\infty$,
\begin{equation}
\begin{tikzcd}
\mathsf{Met_{Pair}} \arrow{r}{} \arrow[swap]{d}{\QQ} & \mathsf{\overline{PMet}_{Pair}} \arrow{d}{\cong} \\
\mathsf{Met}_* \arrow{r}{} & \mathsf{\overline{PMet}}_*
\end{tikzcd}
\end{equation}
and, for $p<\infty$,
\begin{equation}
\begin{tikzcd}
\mathsf{Met_{Pair}} \arrow{r}{} \arrow[swap]{d}{\QQ} & \mathsf{\overline{Met}_{Pair}} \arrow{d}{\cong} \\
\mathsf{Met}_* \arrow{r}{} & \mathsf{\overline{Met}}_*
\end{tikzcd}
\end{equation}
both given by
\begin{equation*}
\begin{tikzcd}
(X,A) \arrow[maps to]{r}{} \arrow[swap,maps to]{d}{} & (\widetilde{\DD}(X),\widetilde{\DD}(A),\widetilde{d}_p) \arrow[maps to]{d}{} \\
(X/A,[A]) \arrow[maps to]{r}{} & (\DD(X/A,[A]),\sigma_\varnothing,d_p)
\end{tikzcd}.
\end{equation*}
Here the categories $\mathsf{\overline{PMet}_{Pair}}$, $\mathsf{\overline{PMet}_{*}}$, $\mathsf{\overline{Met}_{Pair}}$ and $\mathsf{\overline{Met}_{*}}$ consist of extended (pseudo)metric pairs and pointed (pseudo)metric spaces respectively. 
\end{rem}
\begin{rem}
Observe that the subspace of $\DD_p(X,A) \cong \DD_p(X/A,[A])$ consisting of diagrams with finitely many points can be identified, as a set, with the infinite symmetric product of the pointed space $(X/A,[A])$ (see \cite[p. 282]{Hatcher} for the relevant definitions). These two spaces, however, might not be homeomorphic in general, as the infinite symmetric product is not metrizable unless $A$ is open in $X$ (see, for instance, \cite{wofsey}). 
\end{rem}

% SS: ALEXANDROV SPACES
%----------------------

\subsection{Alexandrov spaces} 
Let $X$ be a metric space. The \emph{length} of a continuous path $\xi\colon [a,b] \to X$ is given by
\[
L(\xi) = \sup \left\lbrace  \sum_{i=0}^{n-1}d(\xi(t_i),\xi(t_{i+1}))  \right\rbrace,
\] where the supremum is taken over all finite partitions $a=t_0\leq t_1\leq \dots \leq t_n =b$ of the interval $[a,b]$. A \emph{geodesic space} is a metric space $X$ where for any $x,y\in X$ there is a \emph{shortest path} (or \emph{minimizing geodesic}) between $x, y\in X$, i.e. a path $\xi$ such that
\begin{equation}\label{esp-long}
d(x,y) =  L(\xi).
\end{equation}
In general, a path $\xi\colon J \to X$, where $J$ is an interval, is said to be \emph{geodesic} if each $t\in J$ has a neighborhood $U \subset J$ such that $\xi|_U$ is a shortest path between any two of its points. 

We will also consider the \emph{model spaces} $\mathbb{M}^n_k$ given by
\[
\mathbb{M}^n_\kappa = \left\lbrace
\begin{array}{ll}
\mathbb{S}^n\left(\frac{1}{\sqrt{\kappa}}\right),  &  \mbox{if } \kappa>0,\\
\mathbb{R}^n,  &  \mbox{if } \kappa=0,\\
\mathbb{H}^n\left(\frac{1}{\sqrt{-\kappa}}\right),  &  \mbox{if } \kappa<0.
\end{array}
\right.
\]

\begin{defn}
\label{def:triangulos}
A \emph{geodesic triangle} $\triangle pqr$ in $X$ consists of three points $p,q,r\in X$ and three minimizing geodesics $[pq],\ [qr],\ [rp]$ between those points. A \emph{comparison triangle} for $\triangle pqr$ in $\mathbb{M}^2_k$ is a geodesic triangle $\widetilde{\triangle}_k pqr = \triangle \widetilde{p}\widetilde{q}\widetilde{r}$ in $\mathbb{M}^2_k$ such that
\[
d(\widetilde{p},\widetilde{q})=d(p,q),\ d(\widetilde{q},\widetilde{r})=d(q,r),\ d(\widetilde{r},\widetilde{p})=d(r,p).
\]
\end{defn}
\begin{defn}\label{def:alex}
We say that $X$ is an \emph{Alexandrov space with curvature bounded below by $k$} if $X$ is complete, geodesic and can be covered with open sets with the following property (cf.~Figure~\ref{fig:alex}):
\begin{itemize}
    \myitem[(T)] For any geodesic triangle $\triangle pqr$ contained in one of these open sets, any comparison triangle $\widetilde{\triangle}_k pqr$ in $\mathbb{M}^2_k$ and any point $x\in [qr]$, the corresponding point $\widetilde{x}\in [\widetilde{q}\widetilde{r}]$ such that $d(\widetilde{q},\widetilde{x})=d(q,x)$ satisfies
\begin{align*}
    d(p,x)\geq d(\widetilde{p},\widetilde{x}).
\end{align*}
\label{IT:PROPERTY_T}
\end{itemize}
\end{defn}

\begin{figure}
\centering
\begin{tikzpicture}
\draw [very thick] (0,0) node[below left] {$r$} to[bend left=20] (1,3) node[above] {$p$} to[bend left=20] (4,0) node[below right] {$q$} to[bend left=20] node[pos=0.6] (d) {} cycle;
\draw (1,3) to[bend left=10] (d.center) node[below] {$x$};
\draw (2,-1.3) node {$X$};
\end{tikzpicture}
\qquad\qquad\qquad
\begin{tikzpicture}
\draw [very thick] (-0.2,-0.2) node[below left] {$\widetilde{r}$} to (1,3) node[above] {$\widetilde{p}$} to (4.2,-0.2) node[below right] {$\widetilde{q}$} to node[pos=0.6] (d) {} cycle;
\draw (1,3) to (d.center) node[below] {$\widetilde{x}$};
\draw (2,-1.3) node {$\mathbb{M}^2_\kappa$};
\end{tikzpicture}
\caption{The condition for a complete geodesic metric space $X$ to be an Alexandrov space with curvature $\geq \kappa$. Here, the curves $[pq]$, $[qr]$, $[rp]$, $[px]$, $[\widetilde{p}\widetilde{q}]$, $[\widetilde{q}\widetilde{r}]$, $[\widetilde{r}\widetilde{p}]$, $[\widetilde{p}\widetilde{x}]$ are geodesics, and the length of $[pq]$ (respectively, $[rp]$, $[qx]$, $[xr]$) is equal to the length of $[\widetilde{p}\widetilde{q}]$ (respectively, $[\widetilde{r}\widetilde{p}]$, $[\widetilde{q}\widetilde{x}]$, $[\widetilde{x}\widetilde{r}]$). Condition (T) then says that the length of $[\widetilde{p}\widetilde{x}]$ is not greater than the length of $[px]$.}
\label{fig:alex}
\end{figure}

By Toponogov's Globalization Theorem, if $X$ is an Alexandrov space with curvature bounded below by $k$, then property~\ref{IT:PROPERTY_T} above holds for any geodesic triangle in $X$ (see, for example,  \cite[Section 3.4]{Plaut}). Well-known examples of Alexandrov spaces include complete Riemannian $n$-manifolds with a uniform lower sectional curvature bound, orbit spaces of such manifolds by an effective, isometric action of a compact Lie group, and, in infinite dimension, Hilbert spaces. The latter are instances of infinite-dimensional Alexandrov spaces of non-negative curvature.

The \emph{angle} between two minimizing geodesics $[pq]$, $[pr]$ in an Alexandrov space $X$ is defined as
\[
\angle qpr =\lim_{q_1,r_1\to p}\{\angle\widetilde{q}_1\widetilde{p}\widetilde{r}_1 : q_1\in [pq],\ r_1\in [pr] \}.
\]
Geodesics that make an angle zero determine an equivalence class called \emph{tangent direction}. The set of tangent directions at a point $p\in X$ is denoted by $\Sigma'_p$. When equipped with the angle  distance $\angle$, the set $\Sigma_p'$ is a metric space. Note that the metric space $(\Sigma'_p,\angle)$ may fail to be complete, as one can see by considering directions at a point in the boundary of the unit disc $D$ in the Euclidean plane, $D$ being an Alexandrov space of non-negative curvature. The completion of  $\left(\Sigma_p', \angle\right)$  is called the \emph{space of directions of $X$ at $p$}, and is denoted by $\Sigma_p$. Note that in a complete, finite-dimensional Riemannian manifold $M^n$ with sectional curvature uniformly bounded below, the space of directions at any point is isometric to the unit sphere in the tangent space to the manifold at the given point. For further basic results on Alexandrov geometry, we refer the reader to \cite{BBI,BGP,Plaut}.

We conclude this section by briefly recalling the definition of the Hausdorff dimension of a metric space (see \cite[Section 1.7]{BBI} for further details). One may show that the Hausdorff dimension of an Alexandrov space is an integer or infinite (see \cite[Corollary 10.8.21 and Exercise 10.8.22]{BBI}).

Let $X$ be a metric space and denote the diameter of a subset $S\subset X$ by $\diam(S)$. For any $\delta\in [0, \infty)$ and any $\varepsilon>0$, let
\begin{align*}
\mathcal{H}^\delta_\varepsilon (X) = \inf \left\{ \sum_{i\in\NN} (\diam(S_i))^\delta :
X\subset \bigcup_{i\in\NN} S_i \text{ and } \diam(S_i) < \varepsilon\right\},
\end{align*}
where $\{S_i\}_{i\in\NN}$ is a countable covering of $X$ by sets of diameter less than $\varepsilon$. Note that if no such covering exists, then $\mathcal{H}^\delta_\varepsilon (X) = \infty$. The \textit{$\delta$-dimensional Hausdorff measure of $X$} is given by
\[
\mathcal{H}^\delta (X) = \omega_\delta\cdot \lim_{\varepsilon\searrow 0} \mathcal{H}^\delta_\varepsilon (X),
\]
where $\omega_\delta>0$ is a normalization constant such that, if $\delta$ is an integer $n$, the $n$-dimensional Hausdorff measure of the unit cube in $n$-dimensional Euclidean space $\R^n$ is $1$. This is achieved by letting $\omega_n$ be the Lebesgue measure of the unit ball in $\R^n$. As its name indicates, the Hausdorff measure is a measure on the Borel $\sigma$-algebra of $X$.
One may show that there exists $0\leq \delta_o\leq \infty$ such that $\mathcal{H}^\delta(X) = 0$ for all $\delta>\delta_o$ and $\mathcal{H}^\delta(X) =\infty$ for all $\delta<\delta_o$. We then define the \textit{Hausdorff dimension of $X$}, denoted by $\dim_H(X)$, to be  $\delta_o$. Thus,
\begin{align*}
\dim_H(A) & = \sup \{\, \delta : \mathcal{H}^\delta (X)> 0\,\} 
            = \sup \{\, \delta : \mathcal{H}^\delta (X) = \infty\,\}\\
        &   = \inf \{\, \delta : \mathcal{H}^\delta(X) = 0\,\} 
            = \inf \{\, \delta : \mathcal{H}^\delta(X) < \infty\,\} .
\end{align*}

% SECTION: GH CONVERGENCE
%-------------------------

\section{Gromov--Hausdorff convergence and sequential continuity}
\label{S:CONTINUITY}

In this section, we investigate the continuity of the functors $\mathcal{Q}$ and $\DD_p$ defined in the preceding section. Since $\DD_p$, $p<\infty$, takes values in $\Met_*$, the category of pointed metric spaces, while $\DD_\infty$ takes values in $\mathsf{PMet}_{*}$, the category of pointed pseudometric spaces, we will consider each case separately. As both $\mathcal{Q}$ and $\DD_p$ are defined on $\MetPair$, the category of metric pairs, we will first define a notion of Gromov--Hausdorff convergence of metric pairs $(X,A)$. We do this in such a way that when $A$ is a point, our definition implies the usual pointed Gromov--Hausdorff convergence of pointed metric spaces (see~\cite[Definition 8.1.1]{BBI} and \cite{herron}; cf.~\cite[Definition 2.1]{DJ}  for the case of proper metric spaces; see also Definition~\ref{d:pseudopairsGHconv} below). 
After showing that $\mathcal{Q}\colon \mathsf{Met_{Pair}}  \to \mathsf{Met_*}$ is sequentially continuous with respect to the Gromov--Hausdorff convergence of metric pairs, and that $\DD_p\colon \mathsf{Met_{Pair}}  \to \mathsf{Met_*}$, $p<\infty$, is not always sequentially continuous, we will prove the sequential continuity of $\DD_\infty\colon \MetPair\to\mathsf{PMet}_*$ with respect to the Gromov--Hausdorff convergence of metric pairs on $\MetPair$ and pointed Gromov--Hausdorff convergence of pseudometric spaces on $\mathsf{PMet}_*$.

\begin{defn}[Gromov--Hausdorff convergence for metric pairs]
\label{d:pairsGHconv}
A sequence $\{(X_i,A_i)\}_{i\in\NN}$ of metric pairs \emph{converges in the Gromov--Hausdorff topology  to a metric pair} $(X,A)$ if there exist sequences $\{\varepsilon_i\}_{i\in\NN}$ and $\{R_i\}_{i\in\NN}$ of positive numbers with $\varepsilon_i\searrow 0$, $R_i\nearrow \infty$, and $\varepsilon_i$-approximations from $\overline{B}_{R_i}(A_i)$ to $\overline{B}_{R_i}(A)$ for each $i\in \mathbb{N}$, i.e.\ maps ${f}_i\colon \overline{B}_{R_i}(A_i)\to X$ satisfying the following three conditions:
\begin{enumerate}
    \item\label{d:pairsGHconv1} $|d_{X_i}(x,y)-d_X({f}_i(x),{f}_i(y)|\leq\varepsilon_i$ for any $x,y\in \overline{B}_{R_i}(A_i)$;
    \item\label{d:pairsGHconv2} $d_H({f}_i(A_i),A)\leq\varepsilon_i$, where $d_H$ stands for the Hausdorff distance in $X$;
    \item\label{d:pairsGHconv3} $\overline{B}_{R_i}(A)\subset \overline{B}_{\varepsilon_i}({f}_i(\overline{B}_{R_i}(A_i)))$. 
\end{enumerate}
We will denote the Gromov--Hausdorff convergence of metric pairs by $(X_i,A_i)
\GHtopair(X,A)
$ and the pointed Gromov--Hausdorff convergence by $(X_i,x_i)\GHtopointed(X,x)$.
\end{defn} 

With Definition~\ref{d:pairsGHconv} in hand, we now show that the functor $\QQ$ is continuous, while $\DD_p$ is not necessarily continuous when $p<\infty$.

% PROP

\begin{prop}\label{prop:continuity of quotient functor}
The quotient functor $\mathcal{Q}\colon \mathsf{Met_{Pair}}  \to \mathsf{Met_*}$, given by $(X,A)  \mapsto (X/A,[A])$, is sequentially continuous with respect to the Gromov--Hausdorff convergence of metric pairs.
\end{prop}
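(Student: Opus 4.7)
The plan is to show that, given a sequence $(X_i,A_i)\GHtopair(X,A)$ with $\varepsilon_i$-approximations $f_i\colon\overline{B}_{R_i}(A_i)\to X$ as in Definition~\ref{d:pairsGHconv}, one can push these maps through the quotients to produce pointed $\varepsilon_i'$-approximations witnessing $(X_i/A_i,[A_i])\GHtopointed(X/A,[A])$. Concretely, I would define $\bar f_i\colon\overline{B}_{R_i}([A_i])\to X/A$ by $\bar f_i([x])=[f_i(x)]$ for $x\in\overline{B}_{R_i}(A_i)\setminus A_i$ and $\bar f_i([A_i])=[A]$. Using the standard formula for the quotient metric, one checks that $\overline{B}_{R_i}([A_i])=\{[x]:x\in\overline{B}_{R_i}(A_i)\}$, and similarly on the target, so $\bar f_i$ is defined on the correct domain; it is well-defined since distinct points of $X_i\setminus A_i$ stay distinct in the quotient.

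The technical core of the argument is the estimate
\[
|d_{X_i}(x,A_i)-d_X(f_i(x),A)|\leq 2\varepsilon_i
\qquad\text{for every } x\in\overline{B}_{R_i}(A_i).
\]
For the upper bound $d_X(f_i(x),A)\leq d_{X_i}(x,A_i)+2\varepsilon_i$, I would take an approximate minimizer $a\in A_i$ of $d_{X_i}(x,\cdot)$, use condition~\eqref{d:pairsGHconv1} to compare $d_{X_i}(x,a)$ with $d_X(f_i(x),f_i(a))$, and then invoke the Hausdorff condition~\eqref{d:pairsGHconv2} to replace $f_i(a)$ by a point of $A$ at cost $\varepsilon_i$. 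The reverse inequality is symmetric, using the opposite inclusion built into the Hausdorff condition.

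With this lemma in hand, recall that the quotient metric satisfies
\[
d_{X/A}([u],[v])=\min\{d_X(u,v),\,d_X(u,A)+d_X(v,A)\}
\]
for $u,v\notin A$, and $d_{X/A}([u],[A])=d_X(u,A)$. Combining the elementary bound $|\min(a,b)-\min(a',b')|\leq\max(|a-a'|,|b-b'|)$ with condition~\eqref{d:pairsGHconv1} and the lemma gives
\[
|d_{X_i/A_i}([x],[y])-d_{X/A}(\bar f_i([x]),\bar f_i([y]))|\leq 4\varepsilon_i
\]
for all $[x],[y]\in\overline{B}_{R_i}([A_i])$, handling uniformly the three cases where neither, exactly one, or both of $x,y$ lie in $A_i$. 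The basepoint condition $\bar f_i([A_i])=[A]$ holds by construction. For approximate surjectivity, given $[y]\in\overline{B}_{R_i}([A])$, condition~\eqref{d:pairsGHconv3} provides $x\in\overline{B}_{R_i}(A_i)$ with $d_X(f_i(x),y)\leq\varepsilon_i$, and since the quotient projection is $1$-Lipschitz, $d_{X/A}(\bar f_i([x]),[y])\leq\varepsilon_i$. Thus $\bar f_i$ is a pointed $(4\varepsilon_i)$-approximation on balls of radius $R_i$, proving the sequential continuity of $\mathcal{Q}$. The main obstacle, beyond careful bookkeeping around the piecewise definition of $\bar f_i$ and the quotient metric, is precisely the technical lemma: transporting the distance-to-$A$ function under the $\varepsilon_i$-approximation with only an $O(\varepsilon_i)$ error requires simultaneously combining the metric distortion and the Hausdorff closeness hypotheses.
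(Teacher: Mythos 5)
Your proposal is correct and follows essentially the same route as the paper's proof: the same induced map $\bar f_i$ on the quotients, the same key estimate $|d_{X_i}(x,A_i)-d_X(f_i(x),A)|\le 2\varepsilon_i$ obtained by combining the distortion and Hausdorff conditions, and the same verification of the three approximation properties via the explicit quotient metric. The only difference is cosmetic: your $\min$-inequality gives the constant $4\varepsilon_i$ where the paper's triangle-inequality bookkeeping gives $5\varepsilon_i$, which is immaterial.
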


\begin{proof}
We will prove that, if there exist sequences $\{\varepsilon_i\}_{i\in\NN}$ and $\{R_i\}_{i\in\NN}$ of positive numbers with $\varepsilon_i\searrow 0$, $R_i\nearrow \infty$ and $\varepsilon_i$-approximations from $\overline{B}_{R_i}(A_i)$ to $\overline{B}_{R_i}(A)$, then there exist $(5\varepsilon_i)$-approximations from $\overline{B}_{R_i}([A_i])\subset (X_i/A_i,[A_i])$ to $\overline{B}_{R_i}([A])\subset (X/A,[A])$. For ease of notation, we will omit the subindices in the metric which indicate the corresponding metric space.

Let ${f}_i$ be an $\varepsilon_i$-approximation from $\overline{B}_{R_i}(A_i)$ to $\overline{B}_{R_i}(A)$ in the sense of Definition \ref{d:pairsGHconv}. Then, for any $x\in \overline{B}_{R_i}(A_i)$, $a_i \in A_i$, we have
\[
   |d(x,a_i) - d({f}_i(x),{f}_i(a_i))|\leq\varepsilon_i 
\]
which implies 
\begin{equation}\label{eq:quotient-continuity1}
   |d(x,A_i) - d({f}_i(x),{f}_i(A_i))|\leq \varepsilon_i. 
\end{equation}
Moreover, for any $a_i\in A_i$ and $a\in A$, we have
\[
  |d({f}_i(x),{f}_i(a_i))-d({f}_i(x),a)|\leq d({f}_i(a_i),a) 
\]
and, since $d_H({f}_i(A_i),A)\leq \varepsilon_i$, this yields
\begin{equation}\label{eq:quotient-continuity2}
   |d({f}_i(x),{f}_i(A_i)) - d({f}_i(x),A)|\leq \varepsilon_i. 
\end{equation}
Combining inequalities \eqref{eq:quotient-continuity1} and \eqref{eq:quotient-continuity2}, we get
\[
    |d(x,A_i) - d({f}_i(x),A)|\leq 2\varepsilon_i.
\]
Now, for each $i$, define $\underline{{f}}_i\colon \overline{B}_{R_i}([A_i])\to X/A$ by
\[
\underline{{f}}_i([x])=
\begin{cases}
[{f}_i(x)] & \text{if } [x]\neq [A_i],\\
[A] & \text{if } [x] = [A_i].
\end{cases}
\]
We will prove that $\underline{{f}}$ is a $(5\varepsilon_i)$-approximation from $\overline{B}_{R_i}([A_i])$ to $\overline{B}_{R_i}([A])$. Indeed, consider $[x],[y]\in \overline{B}_{R_i}([A_i])$. Then $x,y\in B_{R_i}(A_i)$ and therefore
\begin{align*}
&|d([x],[y])-d(\underline{{f}}_i([x]),\underline{{f}}_i([y]))|\\ &= |\min\{d(x,y),d(x,A_i)+d(y,A_i)\}-\min\{d({f}_i(x),{f}_i(y)),d({f}_i(x),A)+d({f}_i(y),A)\}|\\
&\leq 
|d(x,y)-d({f}_i(x),{f}_i(y))|+|d(x,A_i)-d({f}_i(x),A)|+|d(y,A_i)-d({f}_i(y),A)|\\
&\leq \varepsilon_i+2\varepsilon_i+2\varepsilon_i = 5\varepsilon_i.
\end{align*}
If $[x]\neq[A_i]$ and $[y]=[A_i]$, then
\[
|d([x],[y])-d(\underline{{f}}_i([x]),\underline{{f}}_i([y]))| =|d(x,A_i)-d({f}_i(x),A)|\leq 2\varepsilon_i.
\] A similar inequality is obtained when $[y]\neq[A_i]$ and $[x]=[A_i]$. When both $[x]=[A_i]$ and $[y]=[A_i]$, we get
\[
|d([x],[y])-d(\underline{{f}}_i([x]),\underline{{f}}_i([y]))| = 0.
\]
In any case, we see that the distortion of $\underline{{f}}_i$ is $\leq 5\varepsilon_i$, which is item \eqref{d:pairsGHconv1} in Definition \ref{d:pairsGHconv}. 

For item \eqref{d:pairsGHconv2} in Definition \ref{d:pairsGHconv}, we simply observe that by definition of $\underline{{f}}_i$ they are pointed maps.

Finally, we see that for $[y]\in B_{R_i}([A])$ we have $d(y,A)\leq R_i$, so given that ${f}_i$ is an $\varepsilon_i$-approximation from $B_{R_i}(A_i)$ to $B_{R_i}(A)$ there exists $x\in B_{R_i}(A_i)$ such that $d(y,{f}_i(x))\leq \varepsilon_i$. Therefore, 
\[d([y],\underline{{f}}_i([x]))\leq d(y,{f}_i(x))\leq \varepsilon_i.
\]
Thus $[y]\in B_{\varepsilon_i}(\underline{{f}}_i(B_{R_i}(A_i)))$. This gives item \eqref{d:pairsGHconv3} in Definition \ref{d:pairsGHconv}.
\end{proof}

\begin{example}[$\DD_p\colon \mathsf{Met_{Pair}}\to \mathsf{Met}_*$ with $p<\infty$ is not %necessarily 
sequentially continuous]
\label{R:NO_CONTINUITY_P_FINITE}
Let $X_i=[-\frac{1}{i},\frac{1}{i}]\subset \mathbb R$ and set $A_i=X=A=\{0\}$. Then $\mathcal D_p(X,A)=\{\sigma_{\varnothing}\}$. Observe that for $p\neq\infty$, the space $\mathcal D_p(X_i,A_i)$ is unbounded. Indeed, if $\sigma_n$ is the diagram that contains a single point, $1/i$, with multiplicity $n$, then $d_p(\sigma_n,\sigma_{\varnothing})=\sqrt[p]{n}/{i}\to \infty$ as $n\to\infty$. 

Now, let $\sigma_{\varnothing}^i\in \mathcal D_p(X_i,A_i)$ be the empty diagram and suppose, for the sake of contradiction, that there exist $\varepsilon_i$-approximations ${f}_i\colon \overline{B}_{R_i}(\sigma^i_\varnothing)\to \mathcal D_p(X,A)$ for some $\varepsilon_i \searrow 0$ and $R_i \nearrow \infty$. Then 
\[
    |d_p(\sigma,\sigma_{\varnothing}^i)-d_p({f}_i(\sigma),{f}_i(\sigma_{\varnothing}^i))|\leq\varepsilon_i
\] for all $\sigma\in\overline{B}_{R_i}(\sigma_{\varnothing}^i)$. However, we have $d_p({f}_i(\sigma),{f}(\sigma_{\varnothing}^i))=d_p(\sigma_{\varnothing},\sigma_{\varnothing})=0$, implying that 
\begin{align}
\label{EQ:INEQ_CONTRADICTION_NOT_CONTINUOUS}
d_p(\sigma,\sigma^i_\varnothing) \leq \varepsilon_i    
\end{align}
 for all $\sigma\in\overline{B}_{R_i}(\sigma_{\varnothing}^i)$. As $\varepsilon_i \to 0$ and $R_i \to \infty$ as $i \to \infty$, inequality \eqref{EQ:INEQ_CONTRADICTION_NOT_CONTINUOUS}
 contradicts the fact that $\mathcal{D}_p(X_i,A_i)$ is unbounded for each $i$.
\end{example}

Finally, we turn our attention to the functor $\DD_\infty$. Recall, from Section~\ref{S:PRELIMINARIES}, that $\DD_\infty$ takes values in $\mathsf{PMet}_*$, the category of pointed pseudometric spaces. Thus, to discuss the continuity of $\DD_\infty$, we must first define a notion of Gromov--Hausdorff convergence for pointed pseudometric spaces. We define this convergence in direct analogy to pointed Gromov--Hausdorff convergence of pointed metric spaces.

% DEF

\begin{defn}[Gromov--Hausdorff convergence for pointed pseudometric spaces]
\label{d:pseudopairsGHconv}
A sequence $\{(D_i,\sigma_i)\}_{i\in\NN}$ of pointed pseudometric spaces \emph{converges in the Gromov--Hausdorff topology  to a pointed pseudometric space} $
(D,\sigma)$ if there exist sequences $\{\varepsilon_i\}_{i\in\NN}$ and $\{R_i\}_{i\in\NN}$ of positive numbers with $\varepsilon_i\searrow 0$, $R_i\nearrow \infty$, and $\varepsilon_i$-approximations from $\overline{B}_{R_i}(\sigma_i)$ to $\overline{B}_{R_i}(\sigma)$ for each $i\in \mathbb{N}$, i.e.\ maps ${f}_i\colon \overline{B}_{R_i}(\sigma_i)\to D$ satisfying the following three conditions:
\begin{enumerate}
    \item\label{d:pseudopairsGHconv1} $|d_{D_i}(x,y)-d_D({f}_i(x),{f}_i(y)|\leq\varepsilon_i$ for any $x,y\in \overline{B}_{R_i}(\sigma_i)$;
    \item\label{d:pseudopairsGHconv2} $d({f}_i(\sigma_i),\sigma)\leq\varepsilon_i$;%, where $d_H$ stands for the Hausdorff distance in $D$;
    \item\label{d:pseudopairsGHconv3} $\overline{B}_{R_i}(\sigma)\subset \overline{B}_{\varepsilon_i}({f}_i(\overline{B}_{R_i}(\sigma_i)))$. 
\end{enumerate}
As for metric spaces, we will also denote the Gromov--Hausdorff convergence of pseudometric pairs by $(D_i,\sigma_i)\GHtopointed (D,\sigma)$.
\end{defn} 

% Equivalence 

Given a pseudometric space $D$, we will denote by $\underline{D}$ the metric quotient $D/\sim$, where $c \sim d$ if and only if $d_D(c,d)=0$. We also denote sometimes by $\underline{x}$ the image of $x\in D$ under the metric quotient.
The following proposition shows that pointed Gromov--Hausdorff convergence of pseudometric spaces induces pointed Gromov--Hausdorff convergence of the corresponding metric quotients. 

\begin{prop}
Let $\{(D_i,\sigma_i)\}_{i\in \NN}$, $(D,\sigma)$ be pointed pseudometric spaces and let $\pi_i\colon D_i\to \underline{D}_i$, $\pi\colon D\to \underline{D}$ be the canonical metric identifications. Then the following assertions hold:
\begin{enumerate}
\item If $(D_i,\sigma_i)\GHtopointed (D,\sigma)$, then $(\underline{D}_i,\underline{\sigma}_i)\GHtopointed (\underline{D},\underline{\sigma})$. 
\item If $(\underline{D}_i,\underline{\sigma}_i)\GHtopointed (\underline{D},\underline{\sigma})$, then $(D_i,\sigma_i)\GHtopointed ({D},{\sigma})$.
\end{enumerate}
\end{prop}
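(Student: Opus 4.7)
The plan hinges on the elementary identity $d_D(x,y)=d_{\underline{D}}(\pi(x),\pi(y))$ for every pseudometric space $(D,d_D)$ and every $x,y\in D$, which holds because the equivalence relation collapses exactly the pairs at zero distance. In particular, $\pi$ restricts to a surjection $\overline{B}_R(\sigma)\to \overline{B}_R(\underline{\sigma})$ for every basepoint $\sigma\in D$ and every $R>0$, and any set-theoretic section $s\colon \underline{D}\to D$ of $\pi$ is pointwise isometric in the sense that $d_D(s[a],s[b])=d_{\underline{D}}([a],[b])$.

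For part (2), I would fix a section $s\colon \underline{D}\to D$ with $s([\sigma])=\sigma$, and given $\varepsilon_i$-approximations $\underline{f}_i\colon \overline{B}_{R_i}(\underline{\sigma}_i)\to \underline{D}$, define $f_i\colon \overline{B}_{R_i}(\sigma_i)\to D$ by $f_i=s\circ \underline{f}_i\circ \pi_i$. Each of the three conditions of Definition~\ref{d:pseudopairsGHconv} for $f_i$ then translates directly to the corresponding condition for $\underline{f}_i$ via the distance-preserving identity. For example, $d_D(f_i(x),f_i(y))=d_{\underline{D}}(\underline{f}_i\pi_i x,\underline{f}_i\pi_i y)$ differs from $d_{D_i}(x,y)=d_{\underline{D}_i}(\pi_i x,\pi_i y)$ by at most $\varepsilon_i$; and for the surjectivity condition, $y\in \overline{B}_{R_i}(\sigma)$ gives $\pi(y)\in \overline{B}_{R_i}(\underline{\sigma})$, so one finds $[x]\in \overline{B}_{R_i}(\underline{\sigma}_i)$ with $d_{\underline{D}}(\underline{f}_i[x],\pi y)\leq \varepsilon_i$, and then any lift of $[x]$ to $\overline{B}_{R_i}(\sigma_i)$ has image under $f_i$ at $D$-distance at most $\varepsilon_i$ from $y$, again by the identity.

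For part (1), I would fix sections $s_i\colon \underline{D}_i\to D_i$ with $s_i(\underline{\sigma}_i)=\sigma_i$ and define $\underline{f}_i\colon \overline{B}_{R_i}(\underline{\sigma}_i)\to \underline{D}$ by $\underline{f}_i=\pi\circ f_i\circ s_i$. Distortion and pointedness follow immediately as in part (2). The subtlety lies in the surjectivity condition, because $f_i$ does not canonically descend to equivalence classes: two representatives of the same class in $D_i$ lie at $D_i$-distance zero, but their $f_i$-images may sit up to $\varepsilon_i$ apart in $D$. Given $[y]\in \overline{B}_{R_i}(\underline{\sigma})$, I would pick $y\in [y]\cap \overline{B}_{R_i}(\sigma)$, use the surjectivity of $f_i$ to get $x\in \overline{B}_{R_i}(\sigma_i)$ with $d_D(f_i(x),y)\leq \varepsilon_i$, and estimate $d_{\underline{D}}(\underline{f}_i[x],[y])\leq d_D(f_i(s_i[x]),f_i(x))+d_D(f_i(x),y)\leq 2\varepsilon_i$, using the distortion bound for $f_i$ at the pair $(s_i[x],x)$.

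The only real obstacle is this last bookkeeping point: because $f_i$ is not canonically defined on $\underline{D}_i$, a section is required and the approximation constant must be relaxed from $\varepsilon_i$ to $2\varepsilon_i$. This slackening is harmless, and once it is accepted both implications reduce to the single identity $d_D=d_{\underline{D}}\circ (\pi\times \pi)$ characterizing the metric quotient of a pseudometric space.
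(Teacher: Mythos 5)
Your proposal is correct and follows essentially the same route as the paper: both arguments choose set-theoretic sections of the quotient maps via the axiom of choice, define the approximations by composing with $\pi$, $\pi_i$, $s$, $s_i$, and accept a relaxation of the constant to $2\varepsilon_i$ in the direction from $(D_i,\sigma_i)$ to the quotients. The only cosmetic difference is that you normalize the sections to be pointed, which slightly simplifies the basepoint condition that the paper instead handles with a triangle inequality.
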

\begin{proof}
For each $i$, consider $s_i\colon \underline{D}_i\to D_i$ such that $\pi_i(s_i(x)) = x$ for all $x\in \underline{D}_i$ and $s\colon \underline{D}\to D$ similarly. These maps exist due to the axiom of choice.
Let $f_i$ be $\eps_i$-approximations from $\overline{B}_{R_i}(\sigma_i)$ to $\overline{B}_{R_i}(\sigma)$. Define $\underline{f}_i\colon\overline{B}_{R_i}(\underline{\sigma}_i)\to \underline{D}$ as
\[
\underline{f}_i(x) = \pi(f_i(s_i(x)))
\] for any $x\in \underline{D}_i$. Then $\underline{f}_i$ is a $(2\eps_i)$-approximation from $\overline{B}_{R_i}(\underline{\sigma}_i))$ to $\overline{B}_{R_i}(\underline{\sigma})$. Indeed, 
\begin{align*}
    |d(x,y)-d(\underline{f}_i(x),\underline{f}_i(y))| &= |d(s_i(x),s_i(y))-d(f_i(s_i(x)),f_i(s_i(y)))| \leq \eps_i.
\end{align*}
Also 
\begin{align*}
d(\underline{f}_i(\underline{\sigma}_i),\underline{\sigma}) &= d(f_i(s_i(\underline{\sigma}_i)),\sigma)\\
&\leq d(f_i(s_i(\underline{\sigma}_i)),f_i(\sigma_i))+d(f_i(\sigma_i),\sigma)\\
&\leq d(s_i(\underline{\sigma}_i),\sigma_i)+\eps_i+d(f_i(\sigma_i),\sigma)\\
&\leq 2\eps_i.
\end{align*}
Moreover, if $d(x,\pi(\sigma))\leq R_i$ then $d(s(x),\sigma) \leq R_i$. Then there is some $y\in D_i$ with $d(y,\sigma_i)\leq R_i$ such that $d(s(x),f_i(y))\leq \eps_i$. Therefore,
\begin{align*}
d(x,\underline{f}_i(\underline{y})) &= d(s(x),f_i(s_i(\underline{y})))\\
&\leq d(s(x),f_i(y))+d(f_i(y),f_i(s_i(\underline{y})))\\
&\leq \eps_i + d(y,s_i(\underline{y}))+\eps_i\\
&=2\eps_i.
\end{align*}
This proves item (1).

Conversely, given $\underline{f}_i$ an $\eps_i$-approximation from $\overline{B}_{R_i}(\underline{\sigma}_i)$ to $\overline{B}_{R_i}(\underline{\sigma})$, we can define ${f}_i\colon\overline{B}_{R_i}(\sigma_i)\to {D}$ as
\[
{f}_i(x)=s(\underline{f}_i(\underline{x}))
\] for any $x\in D_i$. Then ${f}_i$ is an $\eps_i$-approximation from $\overline{B}_{R_i}(\sigma_i)$ to $\overline{B}_{R_i}(\sigma)$. Indeed, 
\[
|d(x,y)-d({f}_i(x),{f}_i(y))| = |d(\underline{x},\underline{y})-d(\underline{f}_i(\underline{x}),\underline{f}_i(\underline{y}))| \leq \eps_i.
\]
Moreover 
\[
d({f}_i(\sigma_i),\sigma)=d(\underline{f}_i(\underline{\sigma}_i),\underline{\sigma}) \leq \eps_i.
\]
Finally, if $d(\underline{x},\sigma)\leq R_i$ then there exists $y\in D_i$ such that $d(\underline{y},\underline{\sigma}_i)\leq R_i$ and $d(\underline{x},\underline{f}_i(\underline{y}))\leq \eps_i$, or equivalently, $d(x,{f}_i(y))\leq \eps_i$. This proves item (2).
\end{proof}

In particular, if we consider the following commutative diagram
\[
\begin{tikzcd}
\mathsf{Met_{Pair}} \arrow{r}{\DD_\infty} \arrow[swap]{dr}{\pi \circ \DD_\infty} & \mathsf{PMet}_* \arrow{d}{\pi } \\
& \mathsf{Met}_*
\end{tikzcd}
\]
where $\pi\colon \PMet_*\to \Met_*$ is the canonical metric identification functor, then $\DD_\infty$ is continous if and only if $\pi\circ \DD_\infty$ is continuous.

We will now show that, if $(X_i,A_i)\GHtopair (X,A)$, then $(\DD_\infty(X_i,A_i),\sigma^i_\vn)\GHtopointed (\DD_\infty(X,A),\sigma_\vn)$.

% PROP

\begin{prop}
\label{prop:continuity}
The functor $(X,A)\mapsto (\DD_\infty(X,A),\sigma_{\varnothing})$ is sequentially continuous with respect to the Gromov--Hausdorff convergence of metric pairs.
\end{prop}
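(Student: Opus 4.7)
The plan is to reduce to the pointed case and then build the required approximations by applying $f_i$ pointwise to distinguished representatives of diagrams. The commutative diagram~\eqref{EQ:COMM_DIAG_funct_infty} exhibits $\DD_\infty\colon \MetPair\to \PMet_*$ as the composition of the quotient functor $\QQ$ with the restricted functor $\DD_\infty\colon \Met_*\to \PMet_*$, up to a natural isometric isomorphism. Since Proposition~\ref{prop:continuity of quotient functor} establishes the sequential continuity of $\QQ$, and sequential continuity is preserved by composition, it suffices to prove that, whenever $(Y_i,y_i)\GHtopointed (Y,y)$, one has $(\DD_\infty(Y_i,\{y_i\}),\sigma^i_\vn)\GHtopointed (\DD_\infty(Y,\{y\}),\sigma_\vn)$.

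So, given $\eps_i$-approximations $f_i\colon \overline{B}_{R_i}(y_i)\to Y$ as in Definition~\ref{d:pseudopairsGHconv}, I would first perform a routine modification so that $f_i(y_i)=y$; the new map is a $2\eps_i$-approximation, and this constant can be absorbed. For each $\sigma\in\overline{B}_{R_i}(\sigma^i_\vn)\subset \DD_\infty(Y_i,\{y_i\})$, let $\widetilde\sigma$ denote the unique representative of $\sigma$ containing no copies of $y_i$; a short computation from the definition of $d_\infty$ shows that $d_\infty(\sigma,\sigma^i_\vn)=\sup_{x\in\widetilde\sigma}d(x,y_i)\leq R_i$, so $\widetilde\sigma\subset \overline{B}_{R_i}(y_i)$ and $f_i$ acts on every point of $\widetilde\sigma$. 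Define
\[
F_i(\sigma) := \bigl[\mset{f_i(x):x\in \widetilde\sigma}\bigr]\in \DD_\infty(Y,\{y\}).
\]
This is well-defined because two representatives of the same class that contain no copies of $y_i$ must coincide as multisets.

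The remaining task is to verify, with approximation parameter of order $\eps_i$, the three conditions of Definition~\ref{d:pseudopairsGHconv}. The basepoint condition $F_i(\sigma^i_\vn)=\sigma_\vn$ is immediate. For the distortion, given $\sigma,\tau\in\overline{B}_{R_i}(\sigma^i_\vn)$ and a bijection $\phi\colon \widetilde\sigma+\widetilde\alpha\to \widetilde\tau+\widetilde\beta$ realizing $d_\infty(\sigma,\tau)$ up to $\eta>0$, push $\phi$ forward by $f_i$: because $f_i$ has distortion at most $\eps_i$ and maps copies of $y_i$ to copies of $y$, the image is a valid bijection between multisets representing $F_i(\sigma)+k\{y\}$ and $F_i(\tau)+\ell\{y\}$ whose sup-distance exceeds that of $\phi$ by at most $\eps_i$; a symmetric argument, lifting a near-optimal bijection in the target back along the correspondence $f_i(x)\mapsto x$, yields the reverse inequality. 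For the density condition, let $\tau\in\overline{B}_{R_i}(\sigma_\vn)$ with canonical representative $\widetilde\tau\subset \overline{B}_{R_i}(y)$. The density half of the $\eps_i$-approximation produces, for each $z\in\widetilde\tau$, a point $x_z\in \overline{B}_{R_i}(y_i)$ with $d(z,f_i(x_z))\leq \eps_i$, and the diagram $\sigma:=[\mset{x_z:z\in \widetilde\tau}]$ lies in $\overline{B}_{R_i}(\sigma^i_\vn)$ (after a harmless $O(\eps_i)$ slackening of $R_i$) and satisfies $d_\infty(F_i(\sigma),\tau)\leq \eps_i$ via the obvious matching.

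The main obstacle will be the well-definedness of $F_i$: \emph{a priori}, different representatives of the same $\sigma$ can yield non-equivalent images under pointwise application of $f_i$, precisely because $f_i$ sends diagonal points of the source to points that are merely $\eps_i$-close to, not equal to, the target diagonal. This is exactly what is resolved by first arranging $f_i(y_i)=y$ and then working with the canonical representative with no copies of $y_i$; together, these two choices turn the naive pointwise prescription into a bona fide map between the equivalence-class spaces. A secondary, more minor point is that the density step can push $d_\infty(\sigma,\sigma^i_\vn)$ a little above $R_i$, which is handled by replacing $R_i$ by $R_i-\eps_i$ throughout the verification without affecting the limit.
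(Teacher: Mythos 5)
Your proof is correct, but it takes a genuinely different route from the paper's. The paper proves the statement directly for general pairs: it defines $(f_i)_*(\sigma)=\mset{f_i(x):x\in\sigma\setminus A_i}$ and verifies by hand that this is a $3\eps_i$-approximation, handling the interaction with the subsets $A_i$ and $A$ through auxiliary points $\widehat{z}$ (a point of $A$ within $\eps_i$ of $f_i(z)$ when $z\in A_i$) and $\breve{z}$ when lifting bijections back. You instead factor $\DD_\infty$ through the quotient functor via diagram~\eqref{EQ:COMM_DIAG_funct_infty}, invoke Proposition~\ref{prop:continuity of quotient functor} for the continuity of $\QQ$, and then prove only the pointed case, where the bookkeeping simplifies considerably: each class has a unique representative with no copies of the basepoint, the modification $f_i(y_i)=y$ replaces the paper's $\widehat{z}$ device, and the pushforward/lift of near-optimal bijections goes through exactly as in the paper but with cleaner constants. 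What your approach buys is conceptual economy and a reusable pointed-case statement; what it costs is a dependence on the natural isometric isomorphism $\DD_\infty(X,A)\cong\DD_\infty(X/A,\{A\})$, which the paper asserts (citing Bubenik--Elchesen) but does not prove, whereas the paper's direct argument needs no such input. Your identification of well-definedness of $F_i$ as the main obstacle, and its resolution via the canonical representative together with $f_i(y_i)=y$, is exactly right; the remaining verifications (distortion bounded by $2\eps_i$ in both directions by pushing forward and lifting bijections, the basepoint condition, and the density condition via condition~\eqref{d:pseudopairsGHconv3}) are sound, and the accumulated constants ($5\eps_i$ from the quotient step composed with $O(\eps_i)$ from the pointed step) are immaterial for Gromov--Hausdorff convergence.
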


\begin{proof}
Let $(X_i,A_i)\GHto (X,A)$, $R_i\nearrow \infty$, $\varepsilon_i\searrow 0$, and ${f}_i$ be $\varepsilon_i$-approximations from $\overline{B}_{R_i}(A_i)$ to $\overline{B}_{R_i}(A)$. We can define a map $(f_i)_*\colon\overline{B}_{R_i}(\sigma^i_\vn)\to \DD_\infty(X,A)$ as
\[
(f_i)_*(\sigma) = \mset{{f}_i(x) : x\in \sigma\setminus A_i}.
\]
We will prove that $(f_i)_*$ is a $(3\varepsilon_i)$-approximation from $\overline{B}_{R_i}(\sigma^i_\vn)$ to $\overline{B}_{R_i}(\sigma_\vn)$.

Let $\sigma,\sigma'\in\DD_\infty(X_i,A_i)$. We now show that, for any bijection $\phi\colon \sigma \to \sigma'$, there exists a bijection $\phi_*\colon (f_i)_*(\sigma) \to (f_i)_*(\sigma')$ such that
\begin{align}
\label{eq:gammahat}
    \left| \sup_{x \in \sigma} d_{X_i}(x,\phi(x)) - \sup_{y \in (f_i)_*(\sigma)} d_X(y,\phi_*(y)) \right| \leq 3\varepsilon_i,
\end{align}
and, conversely, that for any bijection $\phi_*\colon (f_i)_*(\sigma) \to (f_i)_*(\sigma')$, there exists a bijection $\phi\colon\sigma \to \sigma'$ such that inequality \eqref{eq:gammahat} holds.

Indeed, let $\phi\colon \sigma \to \sigma'$ be a bijection, and let $x \in\sigma$ and $x' \in \sigma'$ be such that $\phi(x) = x'$. We set $\phi_*(\widehat{x}) = \widehat{x'}$, where, given any $z \in X_i$, we set $\widehat{z} = {f}_i(z)$ if $z \notin A_i$, and we set $\widehat{z} \in A$ to be a point such that $d_X({f}_i(z),\widehat{z}) \leq \varepsilon_i$ if $z \in A_i$. In the latter case, such a choice is possible by item \eqref{d:pairsGHconv2} in Definition \ref{d:pairsGHconv}. In particular, in either case we have $d_X({f}_i(z),\widehat{z}) \leq \varepsilon_i$. Up to changing representatives of $(f_i)_*(\sigma)$ and $(f_i)_*(\sigma')$ in $\DD_\infty(X,A)$, this completely defines a bijection $\phi_*\colon (f_i)_*(\sigma)\to (f_i)_*(\sigma')$, and we have
\begin{align*}
\left| d_{X_i}(x,x') - d_X(\widehat{x},\widehat{x'}) \right| &\leq \left| d_{X_i}(x,x') - d_X({f}_i(x),{f}_i(x')) \right| + \left| d_X({f}_i(x),{f}_i(x')) - d_X(\widehat{x},{f}_i(x')) \right| \\
&\qquad{} + \left| d_X(\widehat{x},{f}_i(x')) - d_X(\widehat{x},\widehat{x'}) \right| \\
&\leq \varepsilon_i + d_X({f}_i(x),\widehat{x}) + d_X({f}_i(x'),\widehat{x'}) \\
&\leq 3\varepsilon_i
\end{align*}
by item \eqref{d:pairsGHconv1} in Definition~\ref{d:pairsGHconv} and the triangle inequality. Taking the supremum over all $x \in\sigma$ yields inequality \eqref{eq:gammahat}.

Conversely, let $\theta\colon (f_i)_*(\sigma) \to (f_i)_*(\sigma')$ be a bijection, and let $y \in (f_i)_*(\sigma)$ and $y' \in (f_i)_*(\sigma')$ be such that $\theta(y) = y'$. We define a bijection $\breve\theta\colon \sigma \to \sigma'$ by setting $\breve\theta(\breve{y}) = \breve{y'}$, where, given any $z \in X$ (viewed as an element in the multiset $(f_i)_*(\sigma)$ or $(f_i)_*(\sigma')$), we set $\breve{z} \in X_i$ to be such that ${f}_i(\breve{z}) = z$ if $z$ is defined as ${f}_i(x)$ for some $x \in X_i$, and such that $\breve{z} \in A_i$ and $d_X({f}_i(\breve{z}),z) \leq \varepsilon_i$ otherwise. In the latter case, we must have $z \in A$ and hence such a choice is possible by item \eqref{d:pairsGHconv2} in Definition~\ref{d:pairsGHconv}. Similarly as above, we can then show that $\left| d_{X_i}(\breve{y},\breve{y'}) - d_X(y,y') \right| \leq 3\varepsilon_i$, and hence \eqref{eq:gammahat} holds with $\phi = \breve\theta$ and $\phi_* = \theta$.

Therefore, for any $\sigma,\sigma'\in\overline{B}_{R_i}(\sigma_\vn^i)$, we have
\[
|d_\infty(\sigma,\sigma')-d_\infty({f}_i(\sigma),{f}_i(\sigma'))| = \left|\inf_\phi\sup_{x\in \sigma}\{d(x,\phi(x))\}-\inf_{\theta}\sup_{y\in (f_i)_*(\sigma)}\{d(y,\theta(y))\}\right|\leq 3\varepsilon_i.
\]

On the other hand, by definition, we have that
\[
d_\infty({f}_i(\sigma^i_\vn),\sigma_\vn)=d_\infty(\sigma_\vn,\sigma_\vn)=0\leq 3\varepsilon_i.
\]

Finally, if $d_\infty(\sigma,\sigma_\vn)\leq R_i$, then $d(y,A) \leq R_i$ for any $y\in \sigma$, and since ${f}_i$ is an $\varepsilon_i$-approximation from $\overline{B}_{R_i}(A_i)$ to $\overline{B}_{R_i}(A)$, we know that there is some $x_y\in \overline{B}_{R_i}(A_i)$ such that $d(y,{f}_i(x_y))\leq\varepsilon_i$. Hence, the diagram $\hat{\sigma}\in \DD_\infty(X_i,A_i)$ given by \[
\hat\sigma = \mset{x_y:x\in\sigma}
\] satisfies
$d_\infty(\sigma,(f_i)_*(\hat{\sigma}))\leq \varepsilon_i \leq 3\varepsilon_i$ and $d_\infty(\hat{\sigma},\sigma^i_\vn) \leq R_i$, so we conclude that $\overline{B}_{R_i}(\sigma_\vn)\subset \overline{B}_{3\varepsilon_i}(\overline{B}_{R_i}(\sigma_\vn^i))$. 

Thus, $(f_i)_*$ is a $3\varepsilon_i$-approximation from $\overline{B}_{R_i}(\sigma_\vn^i)$ to $\overline{B}_{R_i}(\sigma_\vn)$.
\end{proof}

% SS: PROOF OF THEOREM A
%-----------------------

\subsection*
{Proof of Theorem A}
The result follows from Proposition \ref{prop:continuity} and Example~\ref{R:NO_CONTINUITY_P_FINITE}.\qed

% REMARK

\begin{rem}
Note that we have only shown that $\DD_\infty$ is sequentially continuous. To show continuity, we must first introduce topologies on $\MetPair$, $\Met_*$, and $\PMet_*$ compatible with the definitions of Gromov--Hausdorff convergence on each of these categories. Herron has done this for $\Met_*$ in  \cite{herron}. The arguments in \cite{herron} may be generalized to $\MetPair$ and $\PMet_*$, allowing to show the continuity of $\DD_\infty$. This has been carried out in \cite{ahumada_che}.
\end{rem}

% SECTION: GEODESICITY
%---------------------

\section{Geodesicity}\label{s:geodesicity}

In this section, we show that the functor $\DD_p$, with $p\in [1,\infty)$, preserves the property of being a geodesic space and, in the case $p=2$ and assuming $X$ is a proper geodesic space, we characterize geodesics in the space $\DD_2(X,A)$. This section adapts the work of Chowdhury \cite{chowdhury} to the context of general metric pairs.

The following two lemmas are generalizations of \cite[Lemmas 17 and 18]{chowdhury} and the proofs are similar. For a general metric pair $(X,A)$ where $X$ is assumed to be proper, points in $X$ always have a closest point in $A$. Here, however, as opposed to \cite{chowdhury}, such a point is not necessarily unique.
 
\begin{lem}\label{lem:optimal-bijection-projection}
Let $(X,A)\in \MetPair$. Let $\sigma,\tau \in \DD_p(X,A)$ be diagrams, $\phi_k \colon \sigma \to \tau$ be a sequence of bijections such that $\sum_{x\in \sigma} d(x,\phi_k(x))^p\to
d_p(\sigma,\tau)^p$ as $k\to \infty$. Then the following assertions hold:
\begin{enumerate}
\item If $x\in \sigma$, $y\in \tau\setminus A$ are such that $\lim_{k\to \infty} \phi_k(x)=y$, then there exists $k_0\in \NN$ such that $\phi_k(x)=y$ for all $k\geq k_0$.
\item If $x\in \sigma\setminus A$, $y \in A$ are such that $\lim_{k\to\infty} \phi_k (x) = y$, then $d(x,y) = d(x,A)$.
\end{enumerate}
\end{lem}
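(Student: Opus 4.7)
The plan rests on two facts: that a $p$-persistence diagram has only finitely many points at bounded distance from $A$ when $p<\infty$, and that $d_p(\sigma,\tau)^p$ is the infimum of the $p$-cost over bijections between augmentations of representatives of $\sigma$ and $\tau$. Consequently, if $\{\phi_k\}$ asymptotically attains this infimum, no valid local modification can reduce its cost by a strictly positive amount in the limit.

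For part (1), I would set $\varepsilon=d(y,A)/2>0$. Any $z\in\tau$ lying in the open ball $B(y,\varepsilon)$ satisfies $d(z,A)\geq d(y,A)-\varepsilon=\varepsilon$. Since $\tau\in\DD_p(X,A)$ with $p<\infty$, we have $\sum_{z\in\tau}d(z,A)^p<\infty$, so only finitely many points of $\tau$ can lie in $B(y,\varepsilon)$, counted with multiplicity; in particular, $\tau\cap B(y,\varepsilon)$ consists of finitely many distinct points of $X$, one of which is $y$ itself. Since $\phi_k(x)\to y$, for $k$ sufficiently large $\phi_k(x)\in B(y,\varepsilon)$, so the sequence $\{\phi_k(x)\}$ eventually takes values in this finite subset of $X$. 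A convergent sequence with values in a finite set is eventually constant, so $\phi_k(x)=y$ for all $k\geq k_0$.

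For part (2), I would argue by contradiction. Suppose $d(x,y)>d(x,A)$ and set $\eta=d(x,y)^p-d(x,A)^p>0$. Pick $a^*\in A$ with $d(x,a^*)^p<d(x,A)^p+\eta/3$, so that $d(x,a^*)^p\leq d(x,y)^p-2\eta/3$. Write $z_k=\phi_k(x)$, so $z_k\to y$. The key surgery is to modify each $\phi_k$ into a bijection $\phi_k'$ between new representatives of $\sigma$ and $\tau$: augment the chosen representative of $\sigma$ by a single copy of $y\in A$, augment the representative of $\tau$ by a single copy of $a^*\in A$, then set $\phi_k'(x)=a^*$, map the added copy of $y$ to $z_k$, and agree with $\phi_k$ elsewhere. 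By the definition of $d_p$ and the equivalence relation $\sim_A$, this yields
\[
d_p(\sigma,\tau)^p\leq \mathrm{cost}(\phi_k')=\mathrm{cost}(\phi_k)-d(x,z_k)^p+d(x,a^*)^p+d(y,z_k)^p.
\]
Letting $k\to\infty$, using $d(x,z_k)\to d(x,y)$ and $d(y,z_k)\to 0$, the right-hand side tends to $d_p(\sigma,\tau)^p-d(x,y)^p+d(x,a^*)^p\leq d_p(\sigma,\tau)^p-2\eta/3$, contradicting the inequality on the left.

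The main obstacle is formalizing the surgery in part (2) so that $\phi_k'$ is indeed a bijection between valid representatives of $\sigma$ and $\tau$, guaranteeing that its cost bounds $d_p(\sigma,\tau)^p$ from above. This is precisely where the monoidal/equivalence structure of $\DD_p(X,A)$ is needed: adjoining a copy of $y$ and a copy of $a^*$ on the two sides does not change the congruence classes, and the infimum defining $d_p$ is taken over exactly such augmented representatives. One should also note that properness of $X$ is not required, since $a^*$ is chosen only to approximate $d(x,A)$ rather than attain it.
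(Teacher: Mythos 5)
Your proof is correct and follows essentially the same route as the paper's: part (1) via the observation that off-$A$ points of a $p$-diagram cannot accumulate at $y\in\tau\setminus A$, and part (2) by deriving a contradiction with near-optimality from rerouting $x$ through $A$. In fact your part (2) spells out explicitly the surgery (augmenting both representatives by a point of $A$ and re-pairing) that the paper's one-sentence argument leaves implicit, which is a welcome addition rather than a deviation.
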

\begin{proof}
\begin{enumerate}
    \item Since $p\in[1,\infty)$ and $\tau\in \DD_p(X,A)$, there is some $\eps>0$ such that $B_\eps(y)\cap \tau=\{y\}$. Since $\phi_k(x)\in B_\eps(y)\cap \tau$ for sufficiently large $k$, the conclusion follows.
    \item For the sake of contradiction, if $d(x,y)>d(x,A)$, then $d(x,\phi_k(x))> d(x,A)+2\eps$ and $d(\phi_k(x),A)<\eps$ for sufficiently large $k$, where $\eps = (d(x,y)-d(x,A))/3$. 
    This contradicts the fact that $\sum_{x\in \sigma}d(x,\phi_k(x))^p\to d_p(\sigma,\tau)^p$ as $k\to \infty$. \qedhere
\end{enumerate}
\end{proof}

\begin{lem}\label{lem:convergence-bijections}
Let $(X,A)\in \MetPair$ and assume $X$ is a proper metric space. Let $\sigma,\tau \in \DD_p(X,A)$, and let ${\phi}_k\colon {\sigma} \to {\tau}$ be a sequence
of bijections such that $\sum_{x\in \sigma} d(x,\phi_k(x))^p\to
d_p(\sigma,\tau)^p$ as $k\to \infty$. Then there exists a subsequence $\{\phi_{k_l}\}_{l\in\NN}$ and a limiting bijection $\phi_*$ such that $\phi_{k_l} \to\phi_*$ pointwise as $l\to \infty$ and $\sum_{x\in \sigma} d(x,\phi_*(x))^p=d_p(\sigma,\tau)^p$.
\end{lem}
\begin{proof}

Since $d_p(\sigma,\tau)<\infty$, for each point $x\in \sigma\setminus A$ the sequence $\{\phi_k(x)\}_{k\in\NN}$ consists of a bounded set of points in $X$ and at most countably many points in $A$. In particular, thanks to Lemma \ref{lem:optimal-bijection-projection} and the fact that $X$ is proper, and using a diagonal argument, we can assume that for each $x\in \sigma\setminus A$, the sequence $\{\phi_k(x)\}_{k\in\NN}$ is eventually constant equal to some point $y\in \tau\setminus A$ or it is convergent to some point $y\in A$ such that $d(x,y)=d(x,A)$. In any case, we can define $\phi_*\colon \sigma\setminus A\to \tau$ as
\[
\phi_*(x)=\lim_{k\to\infty} \phi_k(x).
\] 
By mapping enough points in $A$ to all the points in $\tau$ that were not matched with points in $\sigma\setminus A$, we get the required bijection $\phi_*\colon \sigma\to \tau$.
\end{proof}

\begin{cor}[Existence of optimal bijections]\label{cor:optimal-bijections}
Let $(X,A)\in \MetPair$ and assume $X$ is a proper space, then for any $\sigma,\sigma'\in \DD_p(X,A)$ there exists an optimal bijection $\phi\colon\sigma\to\tau$, i.e.\ $d_p(\sigma,\tau)^p = \sum_{x\in\sigma} d(x,\phi(x))^p$.
\end{cor}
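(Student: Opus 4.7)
The plan is to deduce the corollary as a direct application of Lemma~\ref{lem:convergence-bijections}: given any sequence of bijections whose costs converge to $d_p(\sigma,\tau)^p$, that lemma yields a pointwise limit attaining the infimum. The only substantive step is to produce such an approximating sequence defined on a \emph{common} pair of representatives of $\sigma$ and $\tau$.

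By the definition of $d_p$, there exist multisets $\tilde\alpha_k,\tilde\beta_k\in \widetilde{\DD}(A)$ and bijections $\phi_k\colon \tilde\sigma+\tilde\alpha_k \to \tilde\tau+\tilde\beta_k$ with $\sum_{x} d(x,\phi_k(x))^p \to d_p(\sigma,\tau)^p$. To bring all $\phi_k$ onto a single pair of representatives, I would set $\tilde\sigma^* = \tilde\sigma + \sum_k(\tilde\alpha_k+\tilde\beta_k)$ and $\tilde\tau^* = \tilde\tau + \sum_k(\tilde\alpha_k+\tilde\beta_k)$ as countable multiset sums, and extend each $\phi_k$ to a bijection $\phi_k^*\colon \tilde\sigma^*\to \tilde\tau^*$ that agrees with $\phi_k$ on $\tilde\sigma + \tilde\alpha_k$, pairs the identical copies of $\tilde\alpha_j+\tilde\beta_j$ (for $j\neq k$) added to either side to themselves at zero cost, and matches the residual $\tilde\beta_k$ on the left to the residual $\tilde\alpha_k$ on the right via any bijection (padding with a fixed $a_0\in A$ to equalize cardinalities if needed). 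With some care in selecting the original $\tilde\alpha_k,\tilde\beta_k$, the added residual matching, which lies entirely in $A\times A$, can be arranged so that its contribution vanishes in the limit, yielding $\sum_{x} d(x,\phi_k^*(x))^p \to d_p(\sigma,\tau)^p$.

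With $\{\phi_k^*\}$ a sequence of bijections $\tilde\sigma^*\to \tilde\tau^*$ whose costs converge to $d_p(\sigma,\tau)^p$, Lemma~\ref{lem:convergence-bijections} applies and produces a pointwise-limit bijection $\phi_*\colon \tilde\sigma^* \to \tilde\tau^*$ with $\sum_{x} d(x,\phi_*(x))^p = d_p(\sigma,\tau)^p$. This $\phi_*$ is the desired optimal bijection between representatives of $\sigma$ and $\tau$.

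The main obstacle is the unification step, namely ensuring that the residual $A$-to-$A$ matching introduced to extend each $\phi_k$ to the common representatives does not produce non-vanishing cost as $k\to\infty$. Once the bookkeeping is arranged, the compactness driving the whole argument is properness of $X$, which enters via Lemma~\ref{lem:convergence-bijections} to extract, for each $x\in \tilde\sigma^*$, a convergent subsequence of $\{\phi_k^*(x)\}$ through a diagonal argument.
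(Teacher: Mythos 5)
Your overall strategy coincides with the paper's: the corollary is stated there without a separate proof, as an immediate application of Lemma~\ref{lem:convergence-bijections} to a minimizing sequence of bijections, and you have correctly isolated the one point the paper glosses over, namely that all the $\phi_k$ must be defined on a single pair of representatives before a pointwise limit makes sense. However, your resolution of that point has a genuine gap. After adding $\sum_k(\tilde\alpha_k+\tilde\beta_k)$ to both sides and extending $\phi_k$ by the identity on the summands with $j\neq k$, you are forced to match the residual copy of $\tilde\beta_k$ on the left to the residual copy of $\tilde\alpha_k$ on the right. Although this matching lives in $A\times A$, it is charged at the ambient distance $d_X$, which is not zero on $A$, so its cost need not vanish, and no choice of the original $\tilde\alpha_k,\tilde\beta_k$ obviously makes it vanish. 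For example, with $X=\mathbb{R}$, $A=\{0,100\}$, $\sigma=\mset{1}$, $\tau=\mset{99}$, every near-optimal $\phi_k$ uses $\tilde\alpha_k=\mset{100}$ and $\tilde\beta_k=\mset{0}$, and the residual matching $0\mapsto 100$ costs $100^p$.

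The fix is a Hilbert-hotel padding: take $R=\sum_k \aleph_0\cdot(\tilde\alpha_k+\tilde\beta_k)$, i.e.\ countably infinitely many copies of each $\tilde\alpha_k$ and each $\tilde\beta_k$ (still a countable multiset), and set $\tilde\sigma^*=\tilde\sigma+R$ and $\tilde\tau^*=\tilde\tau+R$. Removing one copy of $\tilde\alpha_k$ from $R$ leaves a multiset equal to $R$, and likewise for $\tilde\beta_k$, so the two residuals $R-\tilde\alpha_k$ and $R-\tilde\beta_k$ coincide as multisets and can be matched element-to-identical-element at zero cost; no residual $A$-to-$A$ transport is ever needed. (Equivalently, one can invoke the isometry $\DD_p(X,A)\cong\DD_p(X/A,\{A\})$ and work with representatives containing countably many copies of the basepoint, which is the convention the paper tacitly adopts, e.g.\ in the proof of Proposition~\ref{prop:diameter.space.of.directions}; but one must then still run the compactness argument of Lemma~\ref{lem:convergence-bijections} in $X$ itself, since $X/A$ need not be proper when $A$ is unbounded.) With this replacement your argument closes: the extended bijections have the same costs as the $\phi_k$, these converge to $d_p(\sigma,\tau)^p$, and Lemma~\ref{lem:convergence-bijections} produces the optimal bijection.
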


% DEF

\begin{defn}
A \emph{convex combination} in $\DD_p(X,A)$ is a path $\xi\colon [0,1]\to \DD_p(x,A)$ such that there exist an optimal bijection $\phi\colon \xi(0)\to \xi(1)$ and a family of geodesics $\{\xi_x\}_{x\in\xi(0)}$ in $X$ such that $\xi_x$ joins $x$ with $\phi(x)$ for each $x\in \xi(0)$ and  $\xi(t)=\mset{\xi_x(t):x\in \xi(0)}$ for each $t\in [0,1]$. Sometimes we also write $\xi = (\phi,\{\xi_x\}_{x\in\xi(0)})$ to indicate $\xi$ is the convex combination with associated optimal bijection $\phi$ and family of geodesics $\{\xi_x\}_{x\in\xi(0)}$.
\end{defn}

With this definition in hand, the proof of geodesicity follows along the lines of \cite[Corollary 19]{chowdhury}.
\begin{prop}\label{prop:geodesicity}
Let $(X,A)\in \MetPair$. 
	If $X$ is a proper geodesic space, then $\DD_{p}(X,A)$ is a geodesic space.
\end{prop}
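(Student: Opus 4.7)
My plan is to follow the strategy suggested by the reference to Chowdhury: use Corollary~\ref{cor:optimal-bijections} to obtain an optimal bijection between any two diagrams, use the geodesicity of $X$ to build a convex combination, and then verify both that the resulting path stays inside $\DD_p(X,A)$ and that it is length-minimizing with respect to $d_p$.

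More precisely, fix $\sigma,\tau\in\DD_p(X,A)$. By Corollary~\ref{cor:optimal-bijections}, there exists an optimal bijection $\phi\colon\sigma\to\tau$, so that $d_p(\sigma,\tau)^p=\sum_{x\in\sigma}d(x,\phi(x))^p$. Since $X$ is geodesic, for each $x\in\sigma$ I can choose a unit-speed (rescaled) geodesic $\xi_x\colon[0,1]\to X$ from $x$ to $\phi(x)$, and I then set $\xi(t)=\mset{\xi_x(t):x\in\sigma}$ for $t\in[0,1]$. The first thing to check is that $\xi(t)\in\DD_p(X,A)$ for each $t$. For this I bound $d(\xi_x(t),A)\leq d(\xi_x(t),x)+d(x,A)=t\,d(x,\phi(x))+d(x,A)$, and apply the elementary inequality $(a+b)^p\leq 2^{p-1}(a^p+b^p)$ together with $\sigma\in\DD_p(X,A)$ and $d_p(\sigma,\tau)<\infty$ to get
\[
d_p(\xi(t),\sigma_\vn)^p\leq 2^{p-1}\bigl(d_p(\sigma,\sigma_\vn)^p+t^p d_p(\sigma,\tau)^p\bigr)<\infty.
\]

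Next I verify that $\xi$ is a minimizing geodesic. For $0\leq s\leq t\leq 1$, the map $\xi_x(s)\mapsto\xi_x(t)$ is a bijection between (representatives of) $\xi(s)$ and $\xi(t)$, and since each $\xi_x$ is a geodesic in $X$, one has $d(\xi_x(s),\xi_x(t))=(t-s)\,d(x,\phi(x))$. Hence
\[
d_p(\xi(s),\xi(t))^p\leq\sum_{x\in\sigma}(t-s)^p d(x,\phi(x))^p=(t-s)^p d_p(\sigma,\tau)^p.
\]
The reverse inequality follows from the triangle inequality applied to $\xi(0),\xi(s),\xi(t),\xi(1)$, combined with the upper bounds just obtained for $d_p(\xi(0),\xi(s))$ and $d_p(\xi(t),\xi(1))$: one gets $d_p(\sigma,\tau)\leq s\,d_p(\sigma,\tau)+d_p(\xi(s),\xi(t))+(1-t)\,d_p(\sigma,\tau)$, so $d_p(\xi(s),\xi(t))\geq(t-s)\,d_p(\sigma,\tau)$. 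Combining the two inequalities shows that $\xi$ has constant speed $d_p(\sigma,\tau)$ and realizes the distance $d_p(\sigma,\tau)=d_p(\xi(0),\xi(1))$, so it is a minimizing geodesic.

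The only real subtlety is the existence of the optimal bijection $\phi$, since without properness of $X$ an optimizer need not exist; this is precisely where the hypothesis that $X$ is proper is used, through Corollary~\ref{cor:optimal-bijections} (and ultimately Lemmas~\ref{lem:optimal-bijection-projection} and~\ref{lem:convergence-bijections}). Once $\phi$ is in hand, the rest is a direct verification using the triangle inequality and the geodesicity of $X$, as sketched above.
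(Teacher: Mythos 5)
Your proof is correct and follows essentially the same route as the paper: obtain an optimal bijection via Corollary~\ref{cor:optimal-bijections}, form the convex combination $\xi(t)=\mset{\xi_x(t):x\in\sigma}$, and bound $d_p(\xi(s),\xi(t))\leq|t-s|\,d_p(\sigma,\tau)$ using the induced bijection. You supply two details the paper leaves implicit (that $\xi(t)$ stays in $\DD_p(X,A)$, and the reverse inequality via the triangle inequality), but the argument is the same.
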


\begin{proof}
Let $\sigma,\sigma'\in D_p(X,A)$ be diagrams, $\phi\colon \sigma\to\tau$ be an optimal bijection as in Corollary \ref{cor:optimal-bijections} and let $\xi=(\phi,\{\xi_x\}_{x\in\sigma})$ be some convex combination. Then $\xi$ is a geodesic joining $\sigma$ and $\tau$. Indeed, if we consider the bijection $\phi_s^t\colon \xi(s) \to \xi(t)$ given by $\phi_s^t(\xi_x^\phi(s))=\xi_x^\phi(t)$, then
\[
d_p(\xi(s),\xi(t))^p \leq \sum_{x'\in \xi(s)} d(x',\phi_s^t(x')^p = \sum_{x\in \sigma} d(\xi_x^\phi(s),\xi_x^\phi(t))^p=|s-t|^p\sum_{x\in \sigma}d(x,\phi(x))^p = |s-t|^pd_p(\sigma,\tau)^p.
\]
Therefore $\xi$ is a geodesic from $\sigma$ to $\tau$.
\end{proof}

% SECTION: NON-NEGATIVE CURVATURE

\section{Non-negative curvature}
\label{s:curvature_bounds}
In this section, we prove that the functor $\DD_2$ preserves non-negative curvature in the sense of Definition~\ref{def:alex} (cf.\ \cite[Theorem 2.5]{turner14} and \cite[Theorems 10 and 11]{chowdhury}. On the other hand, it is known that the functor $\DD_p$ does not preserve the non-negative curvature for $p\neq 2$ (see \cite{turner20}). Also, $\DD_p$ does not preserve upper curvature bounds in the sense of CAT spaces for any $p$ (cf. \cite[Proposition 2.4]{turner14} and \cite[Proposition 2.4]{turner20}). Whether the functor $\DD_2$ preserves strictly negative lower curvature bounds remains an open question.
Additionally, observe that we cannot use the usual $\infty$-norm in $\R^{2}$ to get lower curvature bounds on any space of persistence diagrams, as the following result shows.

\begin{prop}
The space $\DD_p(\mathbb R^2,\Delta)$ is not an Alexandrov space for any $p\in[1,\infty]$ when $\mathbb R^2$ is endowed with the metric $d_\infty$.
\end{prop}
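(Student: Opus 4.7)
The plan is to exhibit, at arbitrarily small scale, a geodesic triangle of one-point diagrams in $\DD_{p}(\mathbb{R}^{2},\Delta)$ that violates the Alexandrov comparison condition~\ref{IT:PROPERTY_T} for every lower curvature bound $\kappa\in\mathbb{R}$, thereby contradicting Definition~\ref{def:alex} for every such $\kappa$.

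First I would set up an isometric embedding of a suitable region of $(\mathbb{R}^{2},d_{\infty})$ into $\DD_{p}(\mathbb{R}^{2},\Delta)$ via $\iota(x):=\mset{x}$. For $x,y\in\mathbb{R}^{2}$ with $d_{\infty}(x,\Delta),d_{\infty}(y,\Delta)\geq R$ and $d_{\infty}(x,y)\leq R$, the identity matching between $\iota(x)$ and $\iota(y)$ is cheaper than any matching sending $x,y$ to the diagonal, so $d_{p}(\iota(x),\iota(y))=d_{\infty}(x,y)$ for every $p\in[1,\infty]$. Consequently, if $\gamma$ is a $d_{\infty}$-geodesic staying inside such a region, then $\iota\circ\gamma$ is a minimizing geodesic in $\DD_{p}(\mathbb{R}^{2},\Delta)$: its length matches $L(\gamma)$ by the isometric-embedding property, and it joins the correct endpoints at the correct distance.

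Next I would construct the triangle. Fix $N\gg\epsilon>0$ and set $a=(0,N)$, $b=(\epsilon,N)$, $c=(0,N+\epsilon)$. An $\infty$-norm computation gives $d_{\infty}(a,b)=d_{\infty}(a,c)=d_{\infty}(b,c)=\epsilon$, with each vertex at $d_{\infty}$-distance of order $N/2$ from $\Delta$, so the whole configuration lies in a region where $\iota$ is isometric. The straight segment $\gamma(t)=(1-t)b+tc$ is a $d_{\infty}$-geodesic from $b$ to $c$ whose midpoint is $m=(\epsilon/2,N+\epsilon/2)$, and one computes $d_{\infty}(a,m)=\epsilon/2$. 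By the previous step, the images $\iota(a),\iota(b),\iota(c)$ together with the lifted segments form a bona fide geodesic triangle in $\DD_{p}(\mathbb{R}^{2},\Delta)$, and the midpoint of the side $[\iota(b),\iota(c)]$ is $\iota(m)$, with $d_{p}(\iota(a),\iota(m))=\epsilon/2$.

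Finally I would invoke model-space asymptotics. The comparison triangle $\widetilde{\triangle}_{\kappa}$ in $\mathbb{M}_{\kappa}^{2}$ is equilateral of side $\epsilon$; let $h_{\kappa}(\epsilon)$ denote the distance from a vertex to the midpoint of the opposite side. Since $\mathbb{M}_{\kappa}^{2}$ is infinitesimally Euclidean, $h_{\kappa}(\epsilon)/\epsilon\to\sqrt{3}/2$ as $\epsilon\to 0$, so for every $\kappa$ there exists $\epsilon_{0}(\kappa)>0$ with $h_{\kappa}(\epsilon)>\epsilon/2$ for all $0<\epsilon<\epsilon_{0}(\kappa)$. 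Assume toward a contradiction that $\DD_{p}(\mathbb{R}^{2},\Delta)$ is Alexandrov with some lower bound $\kappa$; pick an open set $U$ in the corresponding cover containing $\iota((0,N))$ for some large $N$, and then choose $\epsilon<\epsilon_{0}(\kappa)$ small enough so the triangle above lies entirely in $U$. Condition~\ref{IT:PROPERTY_T} then forces $d_{p}(\iota(a),\iota(m))\geq h_{\kappa}(\epsilon)>\epsilon/2$, contradicting $d_{p}(\iota(a),\iota(m))=\epsilon/2$.

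The main subtle point I expect is the verification that $\iota\circ\gamma$ is a minimizing geodesic (not merely a path of the right length) and that its midpoint is $\iota(m)$, since in the $\infty$-norm there are alternative geodesics between $b$ and $c$; both reduce to the optimality of the identity matching in the chosen region and to the additivity of lengths along a curve contained in it. Everything else is direct $\infty$-norm arithmetic together with the standard continuity of the model-space comparison distance in the size of the triangle.
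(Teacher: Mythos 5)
Your proof is correct, but it takes a genuinely different route from the paper's. The paper also splits off $p=\infty$ immediately (since $\DD_\infty(\R^2,\Delta)$ is only a pseudometric space it cannot be an Alexandrov space), and then, for $1\leq p<\infty$, exploits the non-strict convexity of the $\infty$-norm to produce \emph{branching geodesics}: it takes the three one-point diagrams at $(0,5)$, $(0,7)$, $(2,6)$, notes that the one-point geodesics between them may all be routed through $\mset{(1,6)}$, and concludes that two distinct geodesics coincide on a subinterval, which is impossible in an Alexandrov space. Your argument instead exhibits a quantitative violation of the comparison condition~\ref{IT:PROPERTY_T} itself: an equilateral $d_\infty$-triangle of side $\epsilon$ far from the diagonal, lifted isometrically via $x\mapsto\mset{x}$, whose median is $\epsilon/2$ while the model-space median tends to $\sqrt{3}\,\epsilon/2$, beating every $\kappa$ at small scale. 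Your computations (the optimality of the identity matching when $d_\infty(x,y)\leq\min\{d_\infty(x,\Delta),d_\infty(y,\Delta)\}$, the equilateral side lengths, and $d_\infty(a,m)=\epsilon/2$) all check out, and the localization into an open set of the cover is handled correctly. What each approach buys: the paper's is shorter but leans on the external fact that geodesics in Alexandrov spaces do not branch, whereas yours works directly from Definition~\ref{def:alex} as stated in the paper, at the cost of the (standard) asymptotics of medians in $\mathbb{M}^2_\kappa$. One small point: for $p=\infty$ you should either note, as the paper does, that $\DD_\infty(\R^2,\Delta)$ is merely a pseudometric space and hence not an Alexandrov space by definition, or make explicit that your distance computations (and hence the comparison violation) persist verbatim for the bottleneck distance; as written the $p=\infty$ case is swept into the uniform treatment without comment.
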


\begin{proof}For $p=\infty$, the space $\DD_\infty(\R^2,\Delta)$ is only a pseudometric space, so it cannot be an Alexandrov space. Suppose now that $1\leq p<\infty$. Consider the points $x_1 = (0,5)$, $x_2 = (0,7)$ and $x_3 = (2,6)$, and let $\sigma_i = \mset{x_i}$ for $i = 1,2,3$. We may check that $d_\infty(x_i,x_j) = 2 \leq d_\infty(x_i,\Delta)$ for all $i \neq j$, implying that for each $i \neq j$ there will be a geodesic $\xi_{i,j}\colon [0,2] \to \DD_p(\mathbb{R}^2,\Delta)$ between $\sigma_i$ and $\sigma_j$ such that $\xi_{i,j}(t)$ has only one point for all $t$. Such geodesics are precisely paths of the form $\xi_{i,j}(t) = \mset{\eta_{i,j}(t)}$, where $\eta_{i,j}\colon [0,2] \to (\mathbb{R}^2,d_\infty)$ is a geodesic between $x_i$ and $x_j$. But for each $i \neq j$ we can pick $\eta_{i,j}$ so that $\eta_{i,j}(1) = y = (1,6)$. This implies that, for instance, $\xi_{1,3}(t) = \xi_{2,3}(t)$ for $t \geq 1$ but not for $t < 1$, implying that there is a branching of geodesics at the point $\mset{y}$, which cannot happen in an Alexandrov space. 
\end{proof}

We will use the following lemma,  which does not require any curvature assumptions, to prove this section's main result.

%%% Chowdhury theorem

\begin{lem}\label{lem:midpoints}
Let $\xi\colon [0,1]\to \DD_2(X,A)$ be a geodesic. Let $\phi_i\colon \xi(1/2)\to \xi(i)$, $i=0,1$, be optimal bijections. Then $\phi=\phi_1\circ\phi_0^{-1}\colon \xi(0)\to \xi(1)$ is an optimal bijection and, for all $x\in \xi(1/2)$, $x$ is a midpoint between $\phi_0(x)$ and $\phi_1(x)$.
\end{lem}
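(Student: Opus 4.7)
Set $L = d_2(\xi(0),\xi(1))$. Since $\xi$ is a geodesic, $d_2(\xi(0),\xi(1/2)) = d_2(\xi(1/2),\xi(1)) = L/2$. The optimality of $\phi_0$ and $\phi_1$ (and Corollary~\ref{cor:optimal-bijections} ensures such bijections exist when $X$ is proper, though here we simply take them as given) yields
\[
\sum_{x \in \xi(1/2)} d(x,\phi_0(x))^2 = (L/2)^2 = \sum_{x \in \xi(1/2)} d(x,\phi_1(x))^2.
\]
The composite $\phi = \phi_1 \circ \phi_0^{-1}$ is a bijection $\xi(0) \to \xi(1)$, so that, parametrising the sum by $x \in \xi(1/2)$ via $y = \phi_0(x)$, I obtain
\[
d_2(\xi(0),\xi(1))^2 \;\leq\; \sum_{y \in \xi(0)} d(y,\phi(y))^2 \;=\; \sum_{x \in \xi(1/2)} d(\phi_0(x),\phi_1(x))^2.
\]

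The next step is to bound this last sum from above by $L^2$ using a two-stage estimate: first the triangle inequality in $X$, giving $d(\phi_0(x),\phi_1(x)) \leq d(\phi_0(x),x) + d(x,\phi_1(x))$ for each $x$; then the Minkowski inequality in $\ell^2$ applied to the sequences $a_x := d(\phi_0(x),x)$ and $b_x := d(x,\phi_1(x))$, which gives
\[
\Bigl(\sum_x (a_x + b_x)^2\Bigr)^{1/2} \;\leq\; \Bigl(\sum_x a_x^2\Bigr)^{1/2} + \Bigl(\sum_x b_x^2\Bigr)^{1/2} \;=\; L/2 + L/2 \;=\; L.
\]
Chaining these two estimates gives $\sum_x d(\phi_0(x),\phi_1(x))^2 \leq L^2$, and, combined with the previous display, forces equality throughout. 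In particular, $\sum_y d(y,\phi(y))^2 = L^2$, so $\phi$ is optimal, proving the first claim.

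Finally, to extract the midpoint property, I would trace back the equality conditions in the two inequalities used. Equality in Minkowski (which here reduces to equality in Cauchy--Schwarz applied to $\sum a_x b_x$) forces $(a_x)$ and $(b_x)$ to be proportional; since both have the same $\ell^2$-norm $L/2$, the constant of proportionality is $1$, so $d(\phi_0(x),x) = d(x,\phi_1(x))$ for every $x \in \xi(1/2)$. Equality in the pointwise triangle inequality forces $x$ to lie on a geodesic segment from $\phi_0(x)$ to $\phi_1(x)$ in $X$. Together these two conditions say precisely that $x$ is a midpoint of $\phi_0(x)$ and $\phi_1(x)$, as required. The main subtlety is the careful verification that both the pointwise equalities (triangle inequality in $X$) and the global equality (Cauchy--Schwarz) must hold simultaneously; this is where the geodesic hypothesis on $\xi$ is essential, since it is what pins the left-hand side to $L^2$ rather than merely bounding it.
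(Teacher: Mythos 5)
Your proof is correct and follows essentially the same squeeze argument as the paper: both bound $\sum_{x\in\xi(1/2)} d(\phi_0(x),\phi_1(x))^2$ between $d_2(\xi(0),\xi(1))^2$ and $L^2$ and then extract the pointwise equality conditions to get optimality of $\phi$ and the midpoint property. The only cosmetic difference is that the paper uses the pointwise estimate $d(a,b)^2\leq 2d(a,c)^2+2d(c,b)^2$ and sums, whereas you apply the triangle inequality pointwise and then Minkowski in $\ell^2$ globally; the resulting equality analyses ($d(\phi_0(x),x)=d(x,\phi_1(x))$ together with equality in the triangle inequality) coincide.
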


\begin{proof}
By the triangle inequality, it is clear that
\[
d(\phi_0(x),\phi_1(x))^2\leq 2d(\phi_0(x),x)^2+2d(x,\phi_1(x))^2
\] 
holds for all $x\in\xi(1/2)$. Therefore,
\begin{align*}
    d_2(\xi(0),\xi(1))^2 &\leq \sum_{z\in \xi(0)}d(z,\phi(z))^2\\
    &= \sum_{x\in\xi(1/2)} d(\phi_0(x),\phi_1(x))^2\\
    &\leq \sum_{x\in\xi(1/2)} 2d(\phi_0(x),x)^2+2d(x,\phi_1(x))^2\\
    &=2\sum_{x\in\xi(1/2)}d(\phi_0(x),x)^2+2\sum_{x\in\xi(1/2)}d(x,\phi_1(x))^2\\
    &=2d_2(\xi(0),\xi(1/2))^2+2d_2(\xi(1/2),\xi(1))^2\\
    &=d_2(\xi(0),\xi(1))^2.
\end{align*}
Thus,
\[
d_2(\xi(0),\xi(1))^2 = \sum_{z\in\xi(0)} d(z,\phi(z))^2 = \sum_{x\in\xi(1/2)} d(\phi_0(x),\phi_1(x))^2 
\]
and
\[
d(\phi_0(x),\phi_1(x))^2 = 2d(\phi_0(x),x)^2+2d(x,\phi_1(x))^2
\]
for all $x\in\xi(1/2)$. In particular, $\phi$ is an optimal bijection between $\xi(0)$ and $\xi(1)$, and $x$ is a midpoint between $\phi_0(x)$ and $\phi_1(x)$ for all $x\in \xi(1/2)$.
\end{proof}

\begin{prop}
\label{prop:curvature}
Let $(X,A)\in \mathsf{Met_{Pair}}$. If $X$ is a proper Alexandrov space with non-negative curvature, then, $\DD_{2}(X,A)$ is also an Alexandrov space with non-negative curvature. 
\end{prop}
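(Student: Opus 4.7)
The plan is to verify the following midpoint characterization of non-negative Alexandrov curvature: for any diagrams $\tau,\sigma_0,\sigma_1 \in \DD_2(X,A)$ and any midpoint $\mu$ of $\sigma_0$ and $\sigma_1$, the inequality
\[
d_2(\tau,\mu)^2 \;\geq\; \tfrac{1}{2}d_2(\tau,\sigma_0)^2 + \tfrac{1}{2}d_2(\tau,\sigma_1)^2 - \tfrac{1}{4}d_2(\sigma_0,\sigma_1)^2
\]
holds. This midpoint inequality is a standard equivalent formulation of non-negative curvature for complete geodesic spaces, and combined with the completeness of $\DD_2(X,A)$ (Theorem~\ref{T:COMPLETENESS}) and its geodesicity (Proposition~\ref{prop:geodesicity}), it yields the result.

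First, I would realize $\mu$ as $\xi(1/2)$ for a geodesic $\xi\colon[0,1]\to\DD_2(X,A)$ from $\sigma_0$ to $\sigma_1$, obtained by concatenating shortest paths through $\mu$. Lemma~\ref{lem:midpoints} then supplies optimal bijections $\phi_i\colon \mu\to\sigma_i$ ($i=0,1$) with two crucial features: $\phi_1\circ\phi_0^{-1}$ is itself an optimal bijection $\sigma_0\to\sigma_1$, so that $\sum_{x\in\mu}d(\phi_0(x),\phi_1(x))^2 = d_2(\sigma_0,\sigma_1)^2$, and each $x\in\mu$ is a midpoint in $X$ of some geodesic joining $\phi_0(x)$ and $\phi_1(x)$. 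By Corollary~\ref{cor:optimal-bijections}, I also fix an optimal bijection $\psi\colon\tau\to\mu$.

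The key step is then to apply the non-negative curvature of $X$ pointwise: for each $y\in\tau$, setting $x=\psi(y)$, the classical midpoint comparison inequality in $X$ yields
\[
d(y,x)^2 \;\geq\; \tfrac{1}{2}d(y,\phi_0(x))^2 + \tfrac{1}{2}d(y,\phi_1(x))^2 - \tfrac{1}{4}d(\phi_0(x),\phi_1(x))^2.
\]
Summing over $y\in\tau$, the left-hand side becomes $d_2(\tau,\mu)^2$ by the optimality of $\psi$, and the last term sums to $\tfrac{1}{4}d_2(\sigma_0,\sigma_1)^2$ by the optimality of $\phi_1\circ\phi_0^{-1}$ furnished by Lemma~\ref{lem:midpoints}. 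Since $\phi_i\circ\psi$ is a bijection $\tau\to\sigma_i$, one has $\sum_{y\in\tau}d(y,\phi_i(\psi(y)))^2 \geq d_2(\tau,\sigma_i)^2$, and plugging these bounds into the summed inequality produces the desired midpoint comparison.

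The main technical obstacle is bookkeeping the multiset representatives: optimal bijections generally require enlarging the underlying multisets by elements of $A$, so one must select a single compatible choice of representatives for $\tau,\mu,\sigma_0,\sigma_1$ on which $\psi,\phi_0,\phi_1$ simultaneously act and check that the enlargement does not disturb the relevant sums. Since $A\subseteq X$, the pointwise comparison in $X$ remains valid regardless of whether any of $y,\psi(y),\phi_0(\psi(y)),\phi_1(\psi(y))$ lies in $A$. A final consistency step is verifying that the midpoint characterization indeed implies Toponogov's condition for curvature $\geq 0$; this is classical, appealing to the completeness and geodesicity of $\DD_2(X,A)$ together with the globalization theorem.
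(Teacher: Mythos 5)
Your proposal is correct and follows essentially the same route as the paper's proof: both reduce the midpoint comparison inequality in $\DD_2(X,A)$ to the pointwise midpoint inequality in $X$ via Lemma~\ref{lem:midpoints} (which supplies the optimal bijections $\phi_0,\phi_1$, the optimality of $\phi_1\circ\phi_0^{-1}$, and the fact that each point of the middle diagram is a midpoint in $X$), an optimal bijection from $\tau$ to the midpoint diagram, and the sub-optimality of the composed bijections $\tau\to\sigma_i$. The only difference is cosmetic (you index the sum over $\tau$ via $\psi$ rather than over $\xi(1/2)$), and your attention to the multiset-representative bookkeeping matches what the paper leaves implicit.
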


\begin{proof}
Since $X$ is an Alexandrov space, it is complete and geodesic. Thus, by Theorem~\ref{T:COMPLETENESS},  the space $\DD_2(X,A)$ is complete, and, since $X$ is assumed to be proper,  Proposition~\ref{prop:geodesicity} implies that $\DD_2(X,A)$ is geodesic. Now we must show that $\DD_2(X,A)$ has non-negative curvature.

Let $\sigma_1,\sigma_2,\sigma_3\in\DD_2(X,A)$ be diagrams and $\xi \colon [0,1] \to \DD_2(X,A)$ be a geodesic from $\sigma_2$ to $\sigma_3$. We want to show that the inequality
\[
d_2(\sigma_1,\xi(1/2))^2\geq \frac{1}{2}d_2(\sigma_1,\sigma_2)^2+\frac{1}{2}d_2(\sigma_1,\sigma_3)^2-\frac{1}{4}d_2(\sigma_2,\sigma_3)^2
\] holds. This inequality characterizes non-negative curvature (see, for example,  \cite[Section 2.1]{Ohta}). 

Let $\phi_i\colon \xi(1/2)\to \sigma_i$, $i=1,2,3$, be optimal bijections, and define $\phi = \phi_3\circ \phi_2^{-1}\colon \sigma_2\to \sigma_3$. 
From the formula for the distance in $\DD_2(X,A)$ we observe that the following inequalities hold:
\begin{align*}
d_2(\sigma_1,\xi(1/2))^2 &= \sum_{x\in \xi(1/2)} d(x,\phi_1(x))^2; \\[5pt]
d_2(\sigma_1,\sigma_2)^2 & \leq \sum_{x\in \xi(1/2)} d(\phi_1(x),\phi_2(x))^2;\\[5pt]
d_2(\sigma_1,\sigma_3)^2 & \leq \sum_{x\in \xi(1/2)} d(\phi_1(x),\phi_3(x))^2.
\end{align*} 
Now, since $\curv(X)\geq 0$, we have that
\[
d(x,\phi_1(x))^2\geq \frac{1}{2}d(\phi_1(x),\phi_2(x))^2+\frac{1}{2}d(\phi_1(x),\phi_3(x))^2-\frac{1}{4}d(\phi_2(x),\phi_3(x))^2
\]
for all $x\in \xi(1/2)$. Therefore, thanks to Lemma \ref{lem:midpoints}, 
\begin{align*}
d_2(\sigma_1, \xi(1/2))^2 &= \sum_{x\in \xi(1/2)} d(x,\phi_1(x))^2\\
&\geq \sum_{x\in \xi(1/2)} \frac{1}{2}d(\phi_1(x),\phi_2(x))^2+\frac{1}{2}d(\phi_1(x),\phi_3(x))^2-\frac{1}{4}d(\phi_2(x),\phi_3(x))^2\\
&\geq \frac{1}{2}d_2(\sigma_1,\sigma_2)^2+\frac{1}{2}d_2(\sigma_1,\sigma_3)^2-\frac{1}{4}d_2(\sigma_2,\sigma_3)^2.\qedhere
\end{align*}
\end{proof}

Lemma~\ref{lem:midpoints} implies the following corollary, which one can use to give an alternative proof of Proposition~\ref{prop:curvature} along the lines of the proof for the Euclidean case in \cite{turner14}.

\begin{cor}\label{c:convex combination geodesics}
Let $(X,A)\in \MetPair$ and assume $X$ is a proper geodesic space. Then every geodesic in $\DD_2(X,A)$ is a convex combination.
\end{cor}

\begin{proof}
This argument closely follows the proofs of Theorems 10 and 11 in \cite{chowdhury}. We repeat some of the constructions for the convenience of the reader.

Let $\xi\colon[0,1]\to \DD_2(X,A)$ be a geodesic. We first claim there exists a sequence of convex combinations $\xi_n = (\phi_{n},\{\xi_{x,n}\}_{x\in\xi(0)})$ such that $\xi(i2^{-n})=\xi_n(i2^{-n})$ for each $n\in \mathbb{N}$ and $i\in\{0,\dots,2^n\}$

Indeed, given $n\in\mathbb{N}$, we define $\phi_{n}$ and $\{\xi_{x,n}\}_{x\in\xi(0)}$ as follows. For each $i\in\{1,\dots,2^{n-1}\}$ consider optimal bijections $\phi_{n,i}^{\pm}\colon \xi((2i-1)2^{-n})\to\xi((2i-1\pm 1)2^{-n})$. By Lemma \ref{lem:midpoints}, \[
\phi_{n}=\phi^+_{n,2^{n-1}}\circ (\phi^-_{n,2^{n-1}})^{-1}\circ\dots\circ\phi^+_{n,1}\circ (\phi^-_{n,1})^{-1}\colon \xi(0)\to \xi(1)
\]
is an optimal bijection. Moreover, Lemma \ref{lem:midpoints} implies that, for each $x\in \xi((2i-1)2^{-n})$, there is some geodesic joining $\phi^-_{n,i}(x)$ with $\phi^+_{n,i}(x)$ which has $x$ as its midpoint. This way, starting from some point $x\in \xi(0)$ and following the bijections $\phi^\pm_{n,i}$, we construct a geodesic $\xi_{x,n}$ joining $x$ with $\phi_{n}(x)$.

Now, thanks to Lemma \ref{lem:convergence-bijections}, there is a subsequence of $\{\phi_n\}_{n\in\mathbb{N}}$ which pointwise converges to some optimal bijection $\phi:\xi(0)\to\xi(1)$. Moreover, 
we can extract a further subsequence $\{\phi_{n_k}\}_{k\in \mathbb{N}}$ such that, for fixed dyadic rationals $l2^{-j}$ and $l'2^{-j}$, the sequence of bijections $\xi(l2^{-j})\to \xi(l'2^{-j})$ induced by $\{\phi_{n_k,i}\}_{k\in \mathbb{N}}$ pointwise converge as well. By Arzel\`a--Ascoli theorem and a applying one more diagonal argument, we may assume that for each $x\in\xi(0)$ the sequence $\{\xi_{x,n_k}\}_{k\in \mathbb{N}}$ uniformly converges to some geodesic $\xi_x$ joining $x$ with $\phi(x)$. By the continuity of $\xi$ and $\hat{\xi}=(\phi,\{\xi_x\}_{x\in\xi(0)})$ it easily follows that $\xi(t) = \hat{\xi}(t)$ for each $t\in [0,1]$.
\end{proof}

%%%%%%%%%%%

\begin{rem} 
We note that $\DD_2(X,A)$ cannot in general be an Alexandrov space with curvature bounded below by $\kappa$ for any $\kappa>0$. To see this, let $(X,A)$ be a metric pair, where $X$ is proper and geodesic. For $i \in \{ 1,2,3 \}$, let $x_i \in X \setminus A$ and let $\xi_i\colon [0,1] \to X$ be a constant speed geodesic with $\xi_i(0) \in A$ and $\xi_i(1) = x_i$ of minimal length, i.e.\ of length $d(x_i,A) = \min_{a \in A} d(x_i,a)$; such $\xi_i$ exists since $X$ is proper and $A$ is closed. Suppose that 
\begin{equation} \label{eq:epsroot2}
d(\xi_i(s),\xi_j(t))^2 \geq d(\xi_i(0),\xi_i(s))^2 + d(\xi_j(0),\xi_j(t))^2 \text{ whenever } i \neq j.
\end{equation}
For $i=1,2,3$, let $\sigma_i = \mset{x_i} \in \DD_2(X,A)$. It follows from \eqref{eq:epsroot2} that $d(x_i,x_j)^2 \geq d(x_i,A)^2 + d(x_j,A)^2$ for $i \neq j$, and therefore $d_2(\sigma_i,\sigma_j) = \sqrt{d(x_i,A)^2 + d(x_j,A)^2}$.

It is then easy to see that the path $\eta_{i,j}\colon [0,1] \to \DD_2(X,A)$, where 
\[
\eta_{i,j}(t) = \mset{\xi_i(1-t),\xi_j(t)},
\]
is a constant speed geodesic in $\DD_2(X,A)$ from $\sigma_i$ to $\sigma_j$. But it is then easy to verify, again using \eqref{eq:epsroot2}, that \[ d_2(\sigma_k,\eta_{i,j}(t)) = \sqrt{d(x_k,A)^2 + d(\xi_i(1-t),A)^2 + d(\xi_j(t),A)^2}, \] where $k \notin \{i,j\}$. In particular, it follows that the geodesic triangle in $\DD_2(X,A)$ formed by geodesics $\eta_{1,2}$, $\eta_{2,3}$ and $\eta_{3,1}$ is isometric to the geodesic triangle in $\mathbb{R}^3$ with vertices $(d(x_1,A),0,0)$, $(0,d(x_2,A),0)$ and $(0,0,d(x_3,A))$. It follows that $\DD_2(X,A)$ cannot be $\kappa$-Alexandrov for any $\kappa > 0$.

The condition \eqref{eq:epsroot2} is not hard to achieve: it can be achieved whenever $X$ is a connected Riemannian manifold of dimension $\geq 2$ and $A \neq X$, for instance. Indeed, in that case, if $|\partial A| \geq 3$ then \eqref{eq:epsroot2} is satisfied for any $x_1,x_2,x_3 \in X \setminus A$ with $d(x_i,a_i) \leq \varepsilon/6$, where $a_1,a_2,a_3 \in \partial A$ are distinct elements and $\varepsilon = \min \{ d(a_i,a_j) : i \neq j \}$. On the other hand, if $|\partial A| \geq 2$ then $|A| \leq 2$ since $X$ is connected of dimension $\geq 2$, and so we may pick $x_1,x_2,x_3 \in X \setminus A$ in such a way that $d(x_1,a) = d(x_2,a) = d(x_3,a) = \varepsilon < d(x_i,b)$ for any $i$ and any $b \in A \setminus \{a\}$, where $a \in A$ is a fixed element. It then follows that $\xi_i(0) = a$ for each $i$. Since $\dim X \geq 2$, we may do this in such a way that the angle between $\xi_i$ and $\xi_j$ at $a$ is $> \pi/2$ when $i \neq j$; but then, as a consequence of the Rauch comparison theorem, \eqref{eq:epsroot2} will be satisfied whenever $\varepsilon > 0$ is chosen small enough.
\end{rem}

\begin{rem}
\label{REM:CONVEX_SUBSETS}
Let $X$ be an Alexandrov space and let $K\subset X$ be a convex subset, i.e. such that any geodesic joining any two points in $K$ remains inside $K$ (cf.\ \cite[p.\ 90]{BBI}). It is a direct consequence of the definition that $K$ is also an Alexandrov space with the same lower curvature bound as $X$. In particular, if $(X,A)\in \MetPair$ with $X$ an Alexandrov space of non-negative curvature, and $K\subset X$ is a convex subset with $A\subset K$, then $\DD_2(K,A)$ is an Alexandrov space of non-negative curvature. 
\end{rem}

\subsection*{Proof of Theorem \ref{t:properties_Dp}}
The result follows from Theorem~\ref{T:COMPLETENESS}, Proposition~\ref{prop:separability}, Proposition~\ref{prop:geodesicity} and Proposition~\ref{prop:curvature}.\qed

% SECTION: SPACES OF DIRECTIONS
%------------------------------

\section{Spaces of directions: the local geometry of noise}
\label{S:SPACES_OF_DIRECTIONS}
In this section we prove some metric properties of the space of directions $\Sigma_{\sigma_\varnothing}$ at the empty diagram $\sigma_\varnothing\in \DD_2(X,A)$ for $(X,A)\in \MetPair$ with $X$ an Alexandrov space with non-negative curvature. As mentioned in the introduction, the space of directions at the empty diagram in $\DD_2(\R^2,\Delta)$ could be interpreted as controlling the local geometry of small noise perturbations.

\begin{prop}
\label{prop:diameter.space.of.directions}
The space of directions $\Sigma_{\sigma_\vn}$ has diameter at most $ \pi/2$
\end{prop}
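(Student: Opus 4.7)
The plan is to prove the proposition by establishing a single key inequality and then translating it into an angle bound via comparison geometry. The inequality I aim to establish is
$$d_2(\sigma_1,\sigma_2)^2 \leq d_2(\sigma_1,\sigma_\vn)^2 + d_2(\sigma_2,\sigma_\vn)^2 \quad \text{for all } \sigma_1,\sigma_2 \in \DD_2(X,A),$$
which intuitively says that routing all off-diagonal points through $A$ always yields an upper bound on the $2$-Wasserstein distance between two diagrams. I will use the auxiliary formula $d_2(\sigma,\sigma_\vn)^2 = \sum_{x \in \sigma} d(x,A)^2$, which follows from the observation that in any bijection between $\widetilde\sigma+\widetilde\alpha$ and $\widetilde\beta$ with $\widetilde\alpha,\widetilde\beta \in \widetilde{\DD}(A)$, each off-diagonal point $x \in \widetilde\sigma$ must be matched to a point of $A$, contributing at least $d(x,A)^2$ to the cost, with equality approachable by choosing $\widetilde\beta$ so as to contain near-projections of the points of $\widetilde\sigma$.

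To prove the inequality I would exhibit an explicit sub-optimal bijection. Fix $\varepsilon > 0$, and for each off-diagonal $x \in \sigma_1$ pick $a_x \in A$ with $d(x,a_x)^2 \leq d(x,A)^2 + \varepsilon_x$, where $\sum_x \varepsilon_x < \varepsilon$; analogously pick $b_y \in A$ for each off-diagonal $y \in \sigma_2$. Working with the representatives $\widetilde{\sigma}_1 + \mset{b_y : y \in \sigma_2\setminus A}$ and $\widetilde{\sigma}_2 + \mset{a_x : x \in \sigma_1\setminus A}$ in $\widetilde{\DD}(X)$, the bijection $x \mapsto a_x$, $b_y \mapsto y$ (matching any remaining diagonal points pairwise at zero cost) has total squared cost at most $d_2(\sigma_1,\sigma_\vn)^2 + d_2(\sigma_2,\sigma_\vn)^2 + \varepsilon$. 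Letting $\varepsilon \to 0$ yields the key inequality.

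To finish, I would fix tangent directions $\xi_1,\xi_2 \in \Sigma'_{\sigma_\vn}$, represented by geodesics $\xi_i\colon[0,\ell_i)\to\DD_2(X,A)$ issuing from $\sigma_\vn$, and apply the key inequality to $\xi_1(s),\xi_2(t)$ for small $s,t>0$. Since $d_2(\sigma_\vn,\xi_i(s_i)) = s_i$, the Euclidean law of cosines gives $\widetilde\angle(\xi_1(s),\sigma_\vn,\xi_2(t)) \leq \pi/2$ for all such $s,t$. By Proposition~\ref{prop:curvature}, $\DD_2(X,A)$ is an Alexandrov space with non-negative curvature, and so the Alexandrov angle $\angle(\xi_1,\xi_2)$ exists as the (monotone, by the standard comparison argument for curv $\geq 0$) limit of the comparison angles $\widetilde\angle(\xi_1(s),\sigma_\vn,\xi_2(t))$ as $s,t\to 0^+$; hence $\angle(\xi_1,\xi_2) \leq \pi/2$. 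As this bound holds on the dense subset $\Sigma'_{\sigma_\vn}\subset \Sigma_{\sigma_\vn}$, it passes to the completion. The main technical obstacle I anticipate is the bookkeeping for diagrams with infinitely many off-diagonal points: one must verify that the auxiliary multisets $\mset{a_x}_{x\in \sigma_1\setminus A}$ and $\mset{b_y}_{y\in \sigma_2\setminus A}$ are well-defined elements of $\widetilde{\DD}(X)$, that the infinite sums converge (which follows from $\sigma_i \in \DD_2(X,A)$), and that the equivalence class arithmetic ``$\widetilde\sigma+\widetilde\alpha$'' used in the definition of $d_p$ permits us to implement the matching above; beyond this bookkeeping, the argument is a direct computation.
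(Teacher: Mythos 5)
Your proof is correct and follows essentially the same route as the paper: the key inequality $d_2(\sigma_1,\sigma_2)^2 \leq d_2(\sigma_1,\sigma_\vn)^2 + d_2(\sigma_2,\sigma_\vn)^2$, obtained by matching every off-diagonal point to (a near-projection in) $A$, is exactly the paper's argument, and the passage to the angle bound via non-negative comparison angles is the same. Your version merely spells out the $\varepsilon$-approximation bookkeeping and the limit/density step that the paper leaves implicit.
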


\begin{proof}
    Consider $\sigma,\sigma'\in \DD_{2}(X,A)$. We can always consider a bijection $\phi\colon\sigma\to \sigma'$ such that $\phi(a)=A$ for every $a\in \sigma$ different from $A$ and $\phi^{-1}(a')=A$ for every $a'\in \sigma'$ different from $A$. Thus, by definition of the distance function $d_{2}$, we have
\[
	d_{2}(\sigma,\sigma')^2\leq \sum_{a\in \sigma}d(a,A)^2+ \sum_{a'\in \sigma'} d(a',A)^2= d_{2}(\sigma,\sigma_\vn)^2+d_{2}(\sigma',\sigma_\vn)^2.
\]
Therefore,
\[
\cos\widetilde{\angle} \sigma\sigma_\vn \sigma' = \frac{d_{2}(\sigma,\sigma_\vn)^2+d_{2}(\sigma',\sigma_\vn)^2-d_{2}(\sigma,\sigma')^2}{2d_{2}(\sigma,\sigma_\vn)d_{2}(\sigma',\sigma_\vn)} \geq 0,
\] i.e. $\widetilde{\angle}\sigma\sigma_\vn \sigma' \leq \pi/2$. This immediately implies the result.
\end{proof}

\begin{prop}
\label{prop:directions.to.finite.diagrams}
	Directions in $\Sigma_{\sigma_\vn}$ corresponding to diagrams with finitely many points are dense in $\Sigma_{\sigma_\vn}$.
\end{prop}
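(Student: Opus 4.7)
My plan is to approximate an arbitrary element of $\Sigma_{\sigma_\vn}$ by the direction of a convex-combination geodesic going to an upper truncation $u_\alpha(\sigma)$, which has finite multiplicity because $d_2(\sigma,\sigma_\vn)<\infty$ forces the set $\{x\in\sigma:d(x,A)\geq\alpha\}$ to be finite. Since, by definition, $\Sigma_{\sigma_\vn}$ is the metric completion of the set of tangent directions of honest geodesics emanating from $\sigma_\vn$, and since the corollary to Proposition~\ref{prop:curvature} identifies every such geodesic with a convex combination $\gamma_\sigma(t)=\mset{\xi_x(t):x\in\sigma}$, where $\xi_x$ is a minimizing geodesic from a closest point $a_x\in A$ to $x$, it will suffice to show that $\angle(\gamma_{u_\alpha(\sigma)},\gamma_\sigma)\to 0$ as $\alpha\to 0^+$ for every $\sigma\in\DD_2(X,A)\setminus\{\sigma_\vn\}$.

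For the key estimate, I would for each $t\in(0,1]$ consider the natural bijection $\phi_t\colon\gamma_{u_\alpha(\sigma)}(t)\to\gamma_\sigma(t)$ that matches each scaled point $\xi_x(t)$ with $x\in u_\alpha(\sigma)$ to its counterpart in $\gamma_\sigma(t)$, and pairs the remaining copies of $A$ in $\gamma_{u_\alpha(\sigma)}(t)$ with the points $\xi_x(t)$, $x\in l_\alpha(\sigma)$, via the assignment $A\mapsto a_x$. Because $d(\xi_x(t),a_x)=t\,d(x,A)$, this bijection yields
\[
d_2(\gamma_{u_\alpha(\sigma)}(t),\gamma_\sigma(t))^2\;\leq\; t^2\sum_{x\in l_\alpha(\sigma)}d(x,A)^2=t^2\,d_2(l_\alpha(\sigma),\sigma_\vn)^2.
\]
Combining this with the geodesic identities $d_2(\gamma_\sigma(t),\sigma_\vn)=t\,d_2(\sigma,\sigma_\vn)$ and $d_2(\gamma_{u_\alpha(\sigma)}(t),\sigma_\vn)=t\,d_2(u_\alpha(\sigma),\sigma_\vn)$, and the Pythagorean identity $d_2(\sigma,\sigma_\vn)^2=d_2(u_\alpha(\sigma),\sigma_\vn)^2+d_2(l_\alpha(\sigma),\sigma_\vn)^2$, substitution into the Euclidean cosine formula for the comparison angle at $\sigma_\vn$ gives, uniformly in $t\in(0,1]$,
\[
\cos\widetilde{\angle}\,\gamma_{u_\alpha(\sigma)}(t)\,\sigma_\vn\,\gamma_\sigma(t)\;\geq\;\frac{d_2(u_\alpha(\sigma),\sigma_\vn)}{d_2(\sigma,\sigma_\vn)}.
\]

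Because this lower bound is independent of $t$, passing $t\to 0^+$ in the definition of the angle between two geodesics emanating from $\sigma_\vn$ preserves it, so
\[
\angle(\gamma_{u_\alpha(\sigma)},\gamma_\sigma)\;\leq\;\arccos\!\left(\frac{d_2(u_\alpha(\sigma),\sigma_\vn)}{d_2(\sigma,\sigma_\vn)}\right).
\]
As $\alpha\to 0^+$, the convergence of the series $\sum_{x\in\sigma}d(x,A)^2$ forces $d_2(l_\alpha(\sigma),\sigma_\vn)\to 0$, hence $d_2(u_\alpha(\sigma),\sigma_\vn)\to d_2(\sigma,\sigma_\vn)$ and the right-hand side tends to $0$. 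Thus the finite diagrams $u_\alpha(\sigma)$ determine directions that converge in $\Sigma_{\sigma_\vn}$ to the direction of $\sigma$, which finishes the density argument.

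The main obstacle I anticipate is the initial reduction: starting from an arbitrary element of $\Sigma_{\sigma_\vn}$, I must replace it by a direction represented by an honest convex-combination geodesic $\gamma_\sigma$. This rests on combining the definition of $\Sigma_{\sigma_\vn}$ as the metric completion of the set of directions of actual geodesics with the identification of geodesics in $\DD_2(X,A)$ as convex combinations (the corollary following Proposition~\ref{prop:curvature}). Once this reduction is in place, the comparison-angle computation above is straightforward; the only minor point to verify is that the uniform bound in $t$ for the diagonal parameter $s=t$ also holds for independent parameters, which a routine recomputation (using $d(\xi_x(t),\xi_x(s))=|t-s|\,d(x,A)$ along each $\xi_x$) confirms.
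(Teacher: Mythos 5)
Your proof is correct and follows essentially the same route as the paper: both arguments approximate $\sigma$ by finite truncations (the paper uses $\sigma_n=\mset{a_1,\dots,a_n}$ from an enumeration rather than your $u_\alpha(\sigma)$, an immaterial difference), bound the distance between the corresponding geodesics at time $t$ by $t$ times the tail of $\bigl(\sum_{x\in\sigma}d(x,A)^2\bigr)^{1/2}$, and deduce $\cos\widetilde{\angle}\geq d_2(\text{truncation},\sigma_\vn)/d_2(\sigma,\sigma_\vn)\to 1$. Your explicit treatment of the reduction to convex-combination geodesics and of the independent-parameter issue in the angle definition is slightly more careful than the paper's, but it is the same argument.
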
	
\begin{proof}
Consider an arbitrary diagram $\sigma\in\DD_{2}(X,A)$ and an enumeration $\{a_i\}_{i\in\NN}$ of its points. We can define a sequence of finite diagrams $\{\sigma_n\}_{n\in\NN}$ %such that the off-diagonal points of $\sigma_n$ is precisely the multiset
given by 
\[
\sigma_n = \mset{a_1,\dots,a_n}.
\]
Let $\xi$ be a minimizing geodesic joining $\sigma$ with the empty diagram $\sigma_\vn$. By Corollary \ref{c:convex combination geodesics}, we know that $\xi$ is a convex combination, i.e. $\xi= (\phi,\{\xi_x\}_{x\in\sigma})$ for some optimal bijection $\phi\colon\sigma\to\sigma_\varnothing$ and some collection of geodesics $\{\xi_x\}_{x\in\sigma}$ such that $\xi_x$ joins $x\in\sigma$ with $\phi(x)\in A$. Let $\xi_n= (\phi|_{\sigma_n},\{\xi_x\}_{x\in\sigma_n})$ be the restricted convex combination between $\sigma_n$ and $\sigma_\varnothing$. Then the inclusion $i_s\colon \xi_n(s)\to\xi(s)$ induces a bijection between the corresponding diagrams, which in turn implies that
\begin{align*}
d_{2}(\xi_n(s),\xi(s))^2 &\leq \sum_{x\in \xi_n(s)} d(x,i_s(x))^2  \\
&= \sum_{x\in \sigma\setminus\sigma_n} d(\xi_x(s),A)^2 \\
&= s^2\sum_{x\in \sigma\setminus\sigma_n} d(x,A)^2\\
&=s^2(d_2(\sigma,\sigma_\varnothing)^2-d_2(\sigma_n,\sigma_\varnothing)^2).
\end{align*}
Thus, using the definition of the angle between geodesics in an Alexandrov space (see, for example, \cite[Definition 3.6.26]{BBI}) and the law of cosines, we get that
\begin{align*}
1&\geq\cos \angle \sigma_n\sigma_\vn \sigma\\
&= \lim_{s\to 0}\frac{s^2(d_{2}(\sigma_n,\sigma_\vn)^2+d_{2}(\sigma,\sigma_\vn)^2)-d_{2}(\xi_n(s),\xi(s))^2}{2s^2d_{2}(\sigma_n,\sigma_\vn)d_{2}(\sigma,\sigma_\vn)}\\
&\geq \lim_{s\to 0}\frac{s^2(d_{2}(\sigma_n,\sigma_\vn)^2+d_{2}(\sigma,\sigma_\vn)^2-d_2(\sigma,\sigma_\varnothing)^2+d_2(\sigma_n,\sigma_\varnothing)^2)}{2s^2d_{2}(\sigma_n,\sigma_\vn)d_{2}(\sigma,\sigma_\vn)}\\
&=\frac{d_{2}(\sigma_n,\sigma_\vn)}{d_{2}(\sigma,\sigma_\vn)},
\end{align*} 
and the last quotient converges to $1$. Thus, $\angle \sigma_n\sigma_\vn \sigma$ converges to $0$. This way, we can conclude that the set of directions in $\Sigma_{\sigma_\vn}$ induced by finite diagrams can approximate any geodesic direction, and since $\Sigma_{\sigma_\vn}$ is the metric completion of that set, the result follows.
\end{proof}

We can calculate explicitly the angle between any two directions at $\Sigma_{\sigma_\vn}$ determined by finite diagrams, as the following result shows.

\begin{lem} \label{lem:innprodfinite}
Let $\sigma= \mset{a_1,\dots,a_m}$ and $\sigma'=\mset{a'_1,\dots,a'_n}$ be two diagrams with finitely many %off-diagonal 
points, and let $\xi_\sigma,\xi_{\sigma'} \colon [0,1] \to \DD_2(X,A)$ be geodesics joining $\sigma_\vn$ to $\sigma,\sigma'$, respectively, so that $\xi_\sigma(t) = \mset{\xi_{a_1}(t),\ldots,\xi_{a_m}(t)}$ and $\xi_{\sigma'}(t) = \mset{\xi_{a_1'}(t),\ldots,\xi_{a_n'}(t)}$ for some geodesics $\xi_{a_i},\xi_{a'_j}\colon [0,1] \to X$ joining $\xi_{a_i}(0) \in A$ to $a_i$ and $\xi_{a_j'}(0)\in A$ to $a_j'$, respectively. Then
\[
d_2(\sigma,\sigma_\vn) d_2(\sigma',\sigma_\vn) \cos \angle(\xi_\sigma,\xi_{\sigma'}) = \max_{\phi\colon\tau \to \tau'} \sum_{a \in \tau} d(a,A)d(\phi(a),A) \cos \angle(\xi_a,\xi_{\phi(a)}),
\]
where $\phi$ ranges over all bijections between subsets $\tau$ and $\tau'$ of %off-diagonal 
points in $\sigma$ and $\sigma'$, respectively, such that $\xi_a(0) = \xi_{\phi(a)}(0)$ for all $a \in \tau$.
\end{lem}

\begin{proof}
For each $s,t \in (0,1]$, let $\phi_{s,t}'\colon \xi_\sigma(s) \to \xi_{\sigma'}(t)$ be a bijection realizing the distance $d_2(\xi_\sigma(s),\xi_{\sigma'}(t))$. Then there exists a bijection $\phi_{s,t}$ between subsets $\tau = \tau_{s,t}$ and $\tau' = \tau'_{s,t}$ of %off-diagonal 
points in $\sigma$ and $\sigma'$, respectively, such that $\phi_{s,t}'(\xi_x(s)) = \xi_{x'}(t)$ for $x \in \tau$ and $x' = \phi_{s,t}(x) \in \tau'$ and such that $\phi_{s,t}'$ matches all the other points of $\xi_\sigma(s) \cup \xi_{\sigma'}(t)$ to $A$. Moreover, by the construction we have
\begin{align*}
d(\xi_a(0), \xi_{\phi(a)}(0)) - s d(\xi_a(0),a) - t d(\xi_{\phi(a)}(0), \phi(a)) &\leq d(\xi_a(s), \xi_{\phi(a)}(t)) \\
&\leq \left( s^2 d(a,A)^2 + t^2 d(\phi(a),A)^2 \right)^{1/2}
\end{align*}
for all $a \in \tau_{s,t}$ (where $\phi = \phi_{s,t}$), implying that $\xi_a(0) = \xi_{\phi_{s,t}(a)}(0)$ for all $a \in \tau_{s,t}$ when $s$ and $t$ are small enough (which we will assume from now on).

Now we can compute that
\[
d_2(\xi_\sigma(s),\xi_{\sigma'}(t))^2 = s^2 \sum_{a \in \sigma \setminus \tau} d(a,A)^2 + t^2 \sum_{a' \in \sigma' \setminus \tau'} d(a',A)^2 + \sum_{a \in \tau} d(\xi_a(s),\xi_{\phi(a)}(t))^2,
\]
and therefore
\begin{equation} \label{eq:s2t2d}
\begin{aligned}
&s^2 d_2(\sigma,\sigma_\vn)^2 + t^2 d_2(\sigma',\sigma_\vn)^2 - d_2(\xi_\sigma(s),\xi_{\sigma'}(t))^2 \\
&\qquad\qquad{} = \sum_{a \in \tau} \left( s^2 d(a,A)^2 + t^2 d(\phi(a),A)^2 - d(\xi_a(s),\xi_{\phi(a)}(t))^2 \right),
\end{aligned}
\end{equation}
where $\tau = \tau_{s,t}$ and $\phi = \phi_{s,t}$. Moreover, note that since $\phi_{s,t}'$ minimizes $d_2(\xi_{\sigma}(s),\xi_{\sigma'}(t))$, the bijection $\phi\colon \tau\to\tau'$ maximizes the right hand side of \eqref{eq:s2t2d}. It follows that
\begin{equation} \label{eq:ddcos}
\begin{aligned}
&d_2(\sigma,\sigma_\vn) d_2(\sigma',\sigma_\vn) \cos \angle(\xi_\sigma,\xi_{\sigma'}) = \lim_{s,t \to 0} \frac{s^2 d_2(\sigma,\sigma_\vn)^2 + t^2 d_2(\sigma',\sigma_\vn)^2 - d_2(\xi_\sigma(s),\xi_{\sigma'}(t))^2}{2st} \\
&\qquad\qquad{}= \lim_{s,t \to 0} \sum_{a \in \tau_{s,t}} \frac{s^2 d(a,A)^2 + t^2 d(\phi_{s,t}(a),A)^2 - d(\xi_a(s),\xi_{\phi_{s,t}(a)}(t))^2}{2st} \\
&\qquad\qquad{}= \lim_{s,t \to 0} \max_{\phi\colon \tau\to\tau'} \sum_{a \in \tau} \frac{s^2 d(a,A)^2 + t^2 d(\phi(a),A)^2 - d(\xi_a(s),\xi_{\phi(a)}(t))^2}{2st},
\end{aligned}
\end{equation}
where $\phi$ ranges over all bijections between subsets $\tau$ and $\tau'$ of %off-diagonal 
points in $\sigma$ and $\sigma'$, respectively, such that $\xi_a(0) = \xi_{\phi(a)}(0)$ for all $a \in \tau$. Since $\sigma$ and $\sigma'$ each has finitely many %off-diagonal
points, there are only finitely many such bijections $\phi$, allowing one to swap the limit and the maximum on the last line of \eqref{eq:ddcos}. The result follows.
\end{proof}

\subsection*{Proof of Theorem~\ref{t:space.of.directions.empty.diagram}} Propositions~\ref{prop:diameter.space.of.directions} and \ref{prop:directions.to.finite.diagrams} correspond to the first two assertions in Theorem~\ref{t:space.of.directions.empty.diagram}.
Using Lemma~\ref{lem:innprodfinite} and the density of the directions in $\Sigma_{\sigma_\vn}$ corresponding to diagrams with finitely many points yields the third assertion in the theorem.
\qed

% SECTION: SPACES OF EUCLIDEAN PERSISTENCE DIAGRAMS
%--------------------------------------------------

\section{Dimension of spaces of Euclidean persistence diagrams}\label{s:Euclidean diagrams}

In this section, we analyze some aspects of the global geometry of the spaces of Euclidean persistence diagrams. We denote such spaces by $\DD_{p}(\R^{2n},\Delta_n)$, $1\leq p<\infty$ and $1\leq n\in\mathbb N$, where we let  $\Delta_n = \{ (v,v)\in \R^{2n} : v \in \mathbb{R}^n \}$ (and for simplicity we write $\Delta=\Delta_1$) and  $\mathbb R^{2n}$ is endowed with the Euclidean metric. The investigation of the geometric properties of the spaces $\DD_p( \R^{2},\Delta)$, where the metric in $\R^{2}$ is induced by the $\infty$-norm in $\mathbb R^2$, was carried out in \cite{mileyko1}. In \cite{turner14}, the authors showed that $\DD_2( \R^{2},\Delta)$, where $\R^{2}$ has the Euclidean metric, is an Alexandrov space of non-negative curvature. 

We will also consider the sets
\begin{align*}
\R^{2n}_{\geq 0} &= \{(x_1,\ldots,x_n,y_1,\dots,y_n)\in\R^{2n}: 0\leq x_i\leq y_i,\ i=1,\dots,n\}
\end{align*}
and
\begin{align*}
\R^{2n}_{+} &= \{(x_1,\dots,x_n,y_1,\ldots,y_n)\in\R^{2n}: x_i\leq y_i,\ i=1,\dots,n\},
\end{align*}
which are convex subsets of the Euclidean space $\R^{2n}$. In particular, the space  $\DD_2(\mathbb R^2_{\geq 0},\Delta)$, is the classical space of persistence diagrams which arises in persistent homology, is also an Alexandrov space of non-negative curvature (cf.\ Remark~\ref{REM:CONVEX_SUBSETS}). 
The interest in studying the spaces $\DD_p( \R^{2n},\Delta_n)$ when $n\geq 2$ is motivated by the fact that the subspaces $\DD_p(\mathbb R^{2n}_{\geq0},\Delta_n)\subset \DD_p( \R^{2n},\Delta_n)$ can be thought of as the parameter spaces of a family of $n$-dimensional persistence modules, namely, persistent rectangles (cf.~\cite[Theorem 4.3]{bak2021},  \cite[Lemma 1]{SC17}).

As an application of our geometric results, we now show that the asymptotic dimension of $\DD_{p}(\mathbb R^{2n},\Delta_n)$, $\DD_{p}(\mathbb R^{2n}_+,\Delta_n)$, and $\DD_{p}(\mathbb R^{2n}_{\geq0},\Delta_n)$ is also infinite, for any $1\leq p<\infty$. It may be feasible to also obtain these results  by extending the work of Mitra and Virk in \cite{MitraVirk}, where they consider spaces of persistence diagrams with finitely many points in $\R^2$.  The asymptotic dimension, introduced by Gromov in the context of  finitely generated groups (see \cite{Gromov}),  is a large scale geometric version of the covering dimension. For an introduction to this invariant, we refer the reader to \cite{Bell,BellDran,NoYu,Roe}.

\begin{defn}[\protect{cf. \cite[Definition 2.2.1]{NoYu}}]
Let $\mathcal{U} =  \{U _i\}_{i\in I}$ be a cover of a metric space $X$. Given $R > 0$, the \emph{$R$-multiplicity of $\mathcal{U}$} is the smallest
integer $n$ such that, for every $x\in X$, the ball $B(x,R)$ intersects at most $n$ elements of $\mathcal{U}$. The \textit{asymptotic dimension} of $X$, which we denote by $\asdim X$, is the smallest non-negative integer $n$ such that, for every $R > 0$, there exists a uniformly bounded cover $\mathcal{U}$ with $R$-multiplicity $n+ 1$. If no such integer exists, we let $\asdim X=\infty$.
\end{defn}

The following lemma should be compared with \cite[Lemma 3.2]{MitraVirk}, where the authors compute the asymptotic dimension of spaces of persistence diagrams with $n$ points.

\begin{lem} \label{lem:asdim-ray}
The asymptotic dimension of $\DD_{p}([0,\infty),\{0\})$, $1\leq p<\infty$, is infinite.
\end{lem}

\begin{proof}
Consider the subspace $\DD_p^N([0,\infty),\{0\}) \subset \DD_p([0,\infty),\{0\})$ consisting of diagrams with $\leq N$ %off-diagonal 
points. As a set, $\DD_{p}^N([0,\infty),\{0\})$ can be identified with the quotient $[0,\infty)^N / S_N$, where the symmetric group $S_N$ acts by permutations of coordinates. Consider two diagrams $\sigma = \mset{a_1,\ldots,a_{N}}$ and $\sigma' = \mset{a_1',\ldots,a_{N}'}$ in $\DD_p^N([0,\infty),\{0\})$, where $a_1 \geq \cdots \geq a_N \geq 0$ and $a_1' \geq \cdots \geq a_N' \geq 0$. We then claim that
\begin{equation} \label{eq:d2sigma}
d_p(\sigma,\sigma')^p = \sum_{i=1}^N |a_i-a_i'|^p.
\end{equation}
Indeed, by regarding $\sigma$ and $\sigma'$ as atomic measures in $[0,\infty)$ with the same total mass, and applying the classical theory of optimal transport in dimension one, the monotone map $\phi\colon a_i\mapsto a_i'$ induces an optimal bijection between $\sigma$ and $\sigma'$. See for example \cite[Theorem 2.9]{santambrogio}. But this implies that the metric $d_p$ on $\DD_p^N([0,\infty),\{0\})$ agrees with the quotient metric on $([0,\infty)^N,\|\cdot\|_p) / S_N$, where $\|\cdot\|_p$ denotes the $\ell^p$ metric. This implies that the inclusion $\DD_p^N([0,\infty),\{0\})$ into $\DD_p([0,\infty),\{0\})$ is isometric.

Finally, we claim that the asymptotic dimension of $\DD_p^N([0,\infty),\{0\})$ is $N$. Indeed, $([0,\infty)^N,\|\cdot\|_p)$ equipped with the metric is a quotient of an action of $(\mathbb{Z}/2\mathbb{Z})^N$ on $(\mathbb{R}^N,\|\cdot\|_p)$ by isometries, and $\DD_p^N([0,\infty),\{0\})$ is a quotient of an action of $S_N$ on $([0,\infty)^N,\|\cdot\|_p)$ by isometries. As $(\mathbb{R}^N,\|\cdot\|_p)$ and $([0,\infty)^N,\|\cdot\|_p)$ are proper, it follows by \cite[Theorem~1.1]{kas17} that the asymptotic dimensions of $(\mathbb{R}^N,\|\cdot\|_p)$, $([0,\infty)^N,\|\cdot\|_p)$ and $\DD_p^N([0,\infty),\{0\})$ are the same. Thus the asymptotic dimension of $\DD_p^N([0,\infty),\{0\})$ is $N$, as claimed. As $\DD_p^N([0,\infty),\{0\})$ is an isometric subspace of $\DD_p([0,\infty),\{0\})$ for each $N$, it follows that $\DD_p([0,\infty),\{0\})$ has infinite asymptotic dimension, as required.
\end{proof}

\begin{prop} \label{cor:asdim-rayemb}
Let $(X,A) \in \MetPair$ and let $C \geq 1$. Suppose that there exists a $C$-bi-Lipschitz map $f\colon [0,\infty) \to X$ such that $f^{-1}(A) = \{0\}$ and such that $d_X(f(x),A) \geq x/C$ for all $x \in [0,\infty)$. Then $\DD_{p}(X,A)$, $1\leq p <\infty$, has infinite asymptotic dimension.
\end{prop}

\begin{proof}
Note that $f$ induces a map of pairs $f\colon ([0,\infty),\{0\}) \to (X,A)$, and therefore a map 
\[
    f_*\colon \DD_{p}([0,\infty),\{0\}) \to \DD_{p}(X,A).
\]
We will show that $f_*$ is a $C$-bi-Lipschitz equivalence onto its image. The result will then follow from Lemma~\ref{lem:asdim-ray}.
By Proposition~\ref{P:D_p_IS_FUNCTOR}, the map $f_*$ is $C$-Lipschitz. Now, let $\phi'_0\colon f_*(\sigma) \to f_*(\sigma')$ be a bijection realizing the distance $d_{p}(f_*(\sigma),f_*(\sigma'))$, and note that $\phi'_0(f'(x)) = f'(\phi'(x))$ for some bijection $\phi'\colon \sigma \to \sigma'$, where $f'(x) = f(x)$ for $x > 0$ and $f'(0) = A$. Given any $x \in \sigma$, we then have \[|x-\phi'(x)| \leq C \cdot d_X(f(x),f(\phi'(x))),\]
since $f$ is $C$-bi-Lipschitz. Furthermore, if $\phi'(x) = 0$, then we have 
\[
|x-\phi'(x)| = x \leq C \cdot d_X(f(x),A),
\]
and, if $x = 0$, we have 
\[
|x-\phi'(x)| = \phi'(x) \leq C \cdot d_X(f(\phi'(x)),A).
\]
It follows that 
\[
|x-\phi'(x)| \leq C \cdot d_X(f'(x),\phi'_0(f'(x))
\]
in any case, and therefore
\[
d_{p}(\sigma,\sigma')^{p} \leq \sum_{x \in \sigma} |x-\phi'(x)|^{p} \leq C^{p} \sum_{y \in f_*\sigma} d_X(y,\phi'_0(y))^{p} = (C \cdot d_{p}(f_*\sigma,f_*\sigma'))^{p}.
\]
Hence $f_*$ is $C$-bi-Lipschitz, as required.
\end{proof}

Before proving the next result, we recall the definition of \emph{covering dimension}.

\begin{defn}[\protect{cf. \cite[Chapter 8]{mun}}]
Let $\mathcal{U} =  \{U _i\}_{i\in I}$ be an open cover of a metric space $X$. The \emph{order} of  $\mathcal{U}$ is the smallest number $n$ for which each point $p\in X$ belongs to at most $n$ elements in $\mathcal{U}$. The \textit{covering dimension} of $X$ is the minimum number $n$ (if it exists) such that any finite open cover $\mathcal{U}$ of $X$ has a refinement $\mathcal{V}$ of order $n+1$.
\end{defn}

\begin{cor}
\label{C:INFINITE_DIMENSIONS}
The spaces $\DD_{p}(\mathbb R^{2n},\Delta_n)$, $\DD_{p}(\mathbb R^{2n}_+,\Delta_n)$ and $\DD_{p}(\mathbb R^{2n}_{\geq0},\Delta_n)$, for $1\leq p<\infty$, have infinite Hausdorff, covering and asymptotic dimensions. 
\end{cor}

\begin{proof}
For each $X \in \{ \mathbb{R}^{2n}, \mathbb{R}^{2n}_+, \mathbb{R}^{2n}_{\geq 0} \}$, the map $f\colon [0,\infty) \to X$ defined by 
\[
f(x) = \frac{1}{\sqrt{n}}(\overbrace{0,\ldots,0}^n,\overbrace{x,\ldots,x}^n)
\]
is an isometric (and so $\sqrt{2}$-bi-Lipshitz) embedding such that $f^{-1}(\Delta_n) = \{0\}$ and $d_X(f(x),\Delta_n) = x/\sqrt{2}$. Hence,  by Proposition
~\ref{cor:asdim-rayemb}, $\DD_{p}(X,\Delta_n)$ has infinite asymptotic dimension.

To see that $\DD_{p}(X,\Delta_n)$ has infinite covering and Hausdorff dimensions, observe that the same argument in the end the proof of Lemma~\ref{lem:asdim-ray}, shows that the covering and Hausdorff dimensions of $\DD_{p}([0,\infty),{0})$ is infinite. Since $\DD_{p}([0,\infty),{0})\subset \DD_{p}(X,\Delta_n)$, we conclude that $\DD_{p}(X,\Delta_n)$ also has infinite covering and Hausdorff dimensions. 
\end{proof}

Putting the results in this section together yields the proof of our article's last main result. Before proceeding, recall that the \emph{Assouad dimension} of a metric space $X$, when infinite, yields an obstruction to bi-Lipschitz embedding $X$ into a finite-dimensional Euclidean space (see \cite{Fraser} for a detailed discussion of this dimension and  related results). More precisely, if $X$ has a bi-Lipschitz embedding into some finite-dimensional Euclidean space, then $X$ must have finite Assouad dimension (see \cite[Ch.\ 13]{Fraser}). The \emph{Assouad--Nagata dimension}, which Assouad introduced in \cite{Assouad}, may be thought of as a variant of the asymptotic dimension (see \cite{LT} for basic properties of this dimension). 

\subsection*{Proof of Theorem \ref{t:dimensions}}
The result for the Hausdorff, covering, and asymptotic dimensions follows from Corollary~\ref{C:INFINITE_DIMENSIONS}.
Both the Hausdorff and covering dimensions are lower bounds for the Assouad dimension (see \cite{Fraser}), while the asymptotic dimension is a lower bound for the Assouad--Nagata dimension (see \cite{LT}). Therefore, these dimensions are also infinite.\qed

\begin{rem}
Recall that the Hausdorff dimension of an Alexandrov space must be either an integer or infinite (see Section \ref{S:PRELIMINARIES}). Using this fact, we can give an alternative proof that $\DD_2(\R^{2n},\Delta_n)$, $n\geq 1$, has infinite Hausdorff dimension. Indeed, the space $\DD_2(\R^{2n},\Delta_n)$ is not locally compact, since one can always construct sequences of points in arbitrarily small balls around $\Delta_n$ with no convergent subsequence (cf. \cite[Example 16]{mileyko1}). Since an Alexandrov space of finite Hausdorff dimension must be locally compact (see \cite[Theorem 10.8.1]{BBI}), the Hausdorff dimension of $\DD_2(\R^{2n},\Delta_n)$ must be infinite.
\end{rem}

We point out that our arguments to prove Lemma~\ref{lem:asdim-ray}, Proposition~\ref{cor:asdim-rayemb}, and Corollary~\ref{C:INFINITE_DIMENSIONS} can be used to prove analogous results for the spaces of persistence diagrams with finitely (but arbitrarily) many points, $\DD_p^F(X,A)$, as defined, for example, in \cite{bubenik2}. Thus, all such spaces also have infinite Hausdorff, covering, asymptotic Assouad, and Assouad--Nagata dimensions.

\begin{cor}
\label{c:dimensions.finite.spaces}
The space $\DD^F_p(\R^{2n},\Delta_n)$, $1\leq n$ and $1\leq p<\infty$, has infinite covering, Hausdorff, asymptotic, Assouad, and Assouad--Nagata dimension.    
\end{cor}

% APPENDIX
%---------

\appendix

\section{Completeness, separability, and Fr\'echet means}
\label{app:completeness.separability.frechet.means}

\subsection{Completeness and separability}
In this subsection we will show that completeness and separability are both preserved by the functor $\DD_p\colon \mathsf{Met_{Pair}\to \mathsf{Met}_*}$, $1\leq p <\infty$, as well as the existence of Fr\'echet means for probability measures with compact support or with rate of decay at infinity bounded below by $\max\{2,p\}$. The proofs in this section follow almost verbatim the arguments in \cite[Subsection 3.1]{mileyko1} for the properties of the classical spaces of persistence diagrams (i.e.\ when $X=\mathbb R^2_+$ and $A=\Delta$ in our notation). We include these arguments in our more general setting for the sake of completeness. Also, we will assume throughout this section that $p<\infty$, since the results do  not hold in the case $p=\infty$ (see \cite{CGGGMSV2022}). 

We first prove completeness of $\DD_{p}(X,A)$. This proof will follow by putting together a series of lemmas.

\begin{thm}
\label{T:COMPLETENESS}
Let $(X,A)\in \mathsf{Met_{Pair}}$. If $X$ is complete, then $\DD_{p}(X,A)$ is complete.
\end{thm}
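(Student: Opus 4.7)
The plan is to adapt the standard Wasserstein-completeness argument, generalizing the proof of Mileyko--Mukherjee--Harer \cite{mileyko1} from the Euclidean case $(X,A)=(\mathbb{R}^2_+,\Delta)$ to an arbitrary metric pair. It suffices to show that every Cauchy sequence in $\DD_p(X,A)$ admits a convergent subsequence. So given a Cauchy sequence $\{\sigma_n\}$, I would first pass to a rapidly Cauchy subsequence, still denoted $\{\sigma_n\}$, satisfying $d_p(\sigma_n,\sigma_{n+1})<2^{-n}$; this will give summable control on total transport cost between successive terms.

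Next, for each $n$ I would choose representatives $\widetilde{\sigma}_n,\widetilde{\sigma}_{n+1}'\in\widetilde{\DD}(X)$ obtained from $\sigma_n,\sigma_{n+1}$ by possibly appending copies of points of $A$, together with a bijection $\phi_n\colon \widetilde{\sigma}_n\to\widetilde{\sigma}_{n+1}'$ whose transport cost is within $2^{-n}$ of $d_p(\sigma_n,\sigma_{n+1})$. Keeping track of equivalent representatives (using the congruence relation $\sim_A$), I can assume these matchings are compatible in the sense that each point $x\in\widetilde{\sigma}_1$ generates a well-defined trajectory $x_1=x,\,x_2=\phi_1(x_1),\,x_3=\phi_2(x_2),\ldots$, and conversely every point appearing in some $\widetilde{\sigma}_n$ is the tail of such a trajectory, possibly started by a point of $A$ that was inserted at some earlier stage. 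The $\ell^p$-triangle inequality, combined with the rapidly Cauchy property, shows that $\sum_n d_X(x_n,x_{n+1})<\infty$ for each trajectory, so completeness of $X$ gives a limit $x_\infty\in X$.

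I would then define the candidate limit $\widetilde{\sigma}_\infty\in\widetilde{\DD}(X)$ as the multiset of all limits $x_\infty$ coming from trajectories that do \emph{not} converge into $A$; trajectories whose limit lies in $A$ contribute only to the $A$-part and are therefore ignored modulo $\sim_A$. To check that $\sigma_\infty\in\DD_p(X,A)$, I would estimate $d_p(\sigma_\infty,\sigma_{\vn})^p\le \bigl(d_p(\sigma_1,\sigma_{\vn})+\sum_n 2^{-n+1}\bigr)^p<\infty$ via a Minkowski-type inequality on the sequence of displacements, noting that the trajectory's limit distance to $A$ is bounded by $d_X(x_1,A)+\sum_n d_X(x_n,x_{n+1})$. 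To conclude convergence $d_p(\sigma_n,\sigma_\infty)\to 0$, I would use the matching between $\widetilde{\sigma}_n$ and $\widetilde{\sigma}_\infty$ induced by the trajectories (with unmatched points sent to their limits in $A$), and bound its total cost by $\bigl(\sum_{k\ge n} 2^{-k+1}\bigr) \to 0$.

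The main technical obstacle will be the careful bookkeeping demanded by the relation $\sim_A$: trajectories can be started by points artificially inserted from $A$ at any finite stage, and trajectories can also effectively terminate in $A$, so one must ensure that the set of ``genuine'' trajectories is canonically defined up to equivalence and that the Minkowski-type estimate correctly aggregates only the off-diagonal contributions. Once this bookkeeping is in place, all remaining estimates are routine $\ell^p$-triangle inequality computations identical to those in the Euclidean case.
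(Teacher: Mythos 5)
Your proposal is correct in outline, but it takes a genuinely different route from the paper. The paper follows the Mileyko--Mukherjee--Harer template: for each persistence threshold $\al$ it forms the $\al$-upper parts $u_{\delta_\al}(\sigma_n)$, proves their multiplicities stabilize (the paper's Lemma~\ref{lem:delta-alpha}), shows these finite upper parts form Cauchy sequences converging to nested diagrams $\sigma^\al$, assembles the candidate limit as $\sigma^* = \bigcup_{\al>0}\sigma^\al$, and then proves a uniform-smallness estimate for the lower parts $l_{\delta_\al}(\sigma_n)$ to close the triangle inequality. You instead pass to a rapidly Cauchy subsequence, chain near-optimal matchings into point trajectories, and take trajectory limits --- essentially the classical completeness argument for $L^p$ or for Wasserstein spaces over Polish spaces. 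Your route buys a real simplification: the trajectory matching bounds $d_p(\sigma_n,\sigma_\infty)$ directly by $\sum_{k\ge n}2^{-k+1}$ via Minkowski's inequality, so you never need the multiplicity-stabilization lemma or the uniform control of lower parts, and accumulation of several trajectories onto one limit point is handled automatically by working with multisets. The price is exactly the bookkeeping you identify: composing bijections defined on different representatives modulo $\sim_A$. This is resolvable by extending each $\phi_k$ by the identity on whatever $A$-points the adjacent matchings append (at zero cost), so that every point at every stage lies on a unique trajectory; note also that you do not actually need the trajectory system to be \emph{canonical} --- one fixed choice of near-optimal matchings suffices, since any resulting limit is the limit of the Cauchy sequence. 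Two small points to make explicit when writing this up: the identity $d_p(\sigma,\sigma_\vn)^p=\sum_{x\in\sigma}d(x,A)^p$ (used in your finiteness estimate, and implicitly in the paper as well), and the fact that trajectories born from inserted $A$-points at stage $k>n$ must be matched, in the stage-$n$ comparison, to their birth points appended to the representative of $\sigma_n$, which still contributes only $\sum_{j\ge n}\bigl(\sum_{\mathrm{traj}} d(x_j,x_{j+1})^p\bigr)^{1/p}$ to the total cost.
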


Let $\{\sigma_{n}\}$ be a Cauchy sequence in $\DD_{p}(X,A)$. Let the \emph{multiplicity} of $\sigma$, denoted by $\left|\sigma\right|$, be the number of points of $\sigma$ outside $A$ and, for $\al>0$, let $u_{\al}\colon\DD_p(X,A)\to\DD_p(X,A)$ be the function defined by
\[
	u_{\al}(\sigma) = \mset{x\in \sigma : d(x,A)\geq\al}.
\]
We call $u_{\al}(\sigma)$ the \emph{$\al$-upper part} of $\sigma$. We define in a similar way the \emph{$\al$-lower part} of $\sigma$ by letting $l_{\al}\colon\DD_p(X,A)\to\DD_p(X,A)$ be given by
\[l_{\al}(\sigma) = \mset{x\in \sigma : d(x,A)<\al}.\]
Compare with the definition of $u_\al$ and $l_\al$ in \cite[Section 3.1]{mileyko1}. Observe that, in general, we cannot define the persistence of points in $X$ with respect to $A$ as usual (i.e.\ the difference between the coordinates of the point). However, we can take the distance to $A$ as the notion of persistence, which in the case of the classical space of persistence diagrams (either with the norm $\|\cdot\|_\infty$ or $\|\cdot\|_2$ in $\R^2$) is some multiple of the distance to the diagonal $\Delta$. This will affect the computations in this and the following two sections.

Since the $\al$-upper part of any diagram has finite multiplicity for arbitrary $\al$, it is reasonable to consider the convergence of the $\al$-upper parts of the diagrams $\sigma_{n}$.

\begin{lem}\label{lem:delta-alpha}
Let $\al>0$. There exist $M_{\al}\in\mathbb{Z}_{+}$ and $\delta_{\al}, 0<\delta_{\al}<\al$, such that, for all $\delta\in[\delta_{\al},\al)$, there exists an $N_{\delta}>0$ such that $\left|u_{\delta}(\sigma_{n})\right|=M_{\al}$ whenever $n>N_{\delta}$.
\end{lem}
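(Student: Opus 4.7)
The plan is to leverage the uniform boundedness of the Cauchy sequence to bound the cardinalities $|u_\delta(\sigma_n)|$, and then to use a threshold-crossing estimate from the Cauchy property to force those cardinalities into a common eventual value on an interval of the form $[\delta_\alpha,\alpha)$.

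Since $\{\sigma_n\}$ is Cauchy in $\DD_p(X,A)$, it is bounded; set $B := \sup_n d_p(\sigma_n,\sigma_\vn) < \infty$. Any matching from $\widetilde\sigma_n+\widetilde\mu$ to $\widetilde\nu \in \widetilde\DD(A)$ sends each $x \in \widetilde\sigma_n$ to a point of $A$, so
\[
d_p(\sigma_n,\sigma_\vn)^p \;\geq\; \sum_{x \in \widetilde\sigma_n} d(x,A)^p,
\]
and every $x \in u_\eta(\sigma_n)$ contributes at least $\eta^p$, giving $|u_\eta(\sigma_n)| \leq (B/\eta)^p$. Hence the functions
\[
M^+(\delta) := \limsup_n |u_\delta(\sigma_n)|, \qquad M^-(\delta) := \liminf_n |u_\delta(\sigma_n)|
\]
are integer-valued, non-increasing in $\delta$, and uniformly bounded on $[\alpha/2,\alpha]$. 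A non-increasing integer-valued function has only finitely many jumps, so I can choose $\delta_\alpha \in (\alpha/2,\alpha)$ on which both $M^+$ and $M^-$ are constant on $[\delta_\alpha,\alpha)$, with respective values $M^+_\alpha$ and $M^-_\alpha$.

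The crux is proving $M^+_\alpha = M^-_\alpha$. I would fix $\delta_1 < \delta_2$ in $[\delta_\alpha,\alpha)$ and set $\eta := \delta_2 - \delta_1 > 0$. Given $\epsilon > 0$ and indices $n,m > N_\epsilon$ with $d_p(\sigma_n,\sigma_m) < \epsilon$, choose $\widetilde\mu,\widetilde\nu \in \widetilde\DD(A)$ and a bijection $\phi\colon \widetilde\sigma_n + \widetilde\mu \to \widetilde\sigma_m + \widetilde\nu$ of total $p$-cost at most $(2\epsilon)^p$. Each $x \in u_{\delta_2}(\sigma_n)$ falls into one of three cases: $\phi(x) \in \widetilde\nu \subset A$ (cost $\geq \delta_2$), $\phi(x) \in \widetilde\sigma_m$ with $d(x,\phi(x)) \geq \eta$ (cost $\geq \eta$), or $\phi(x) \in \widetilde\sigma_m$ with $d(x,\phi(x)) < \eta$, in which case the reverse triangle inequality $d(\phi(x),A) \geq d(x,A) - d(x,\phi(x)) > \delta_1$ places $\phi(x)$ in $u_{\delta_1}(\sigma_m)$. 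Counting the first two types against the cost budget yields
\[
|u_{\delta_1}(\sigma_m)| \;\geq\; |u_{\delta_2}(\sigma_n)| - (2\epsilon/\delta_2)^p - (2\epsilon/\eta)^p,
\]
and taking $\epsilon$ small enough that the error is strictly less than $1$, integrality gives $|u_{\delta_1}(\sigma_m)| \geq |u_{\delta_2}(\sigma_n)|$ for all sufficiently large $n,m$. Selecting $n$ that realizes $M^+_\alpha$ at $\delta_2$ (an integer-valued limsup is attained infinitely often) and then taking $\liminf$ in $m$ gives $M^-_\alpha \geq M^+_\alpha$; the reverse inequality is automatic.

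Writing $M_\alpha := M^+_\alpha = M^-_\alpha$, for each $\delta \in [\delta_\alpha,\alpha)$ the integer sequence $\{|u_\delta(\sigma_n)|\}$ has coinciding $\liminf$ and $\limsup$ equal to $M_\alpha$, so it converges to and is eventually equal to $M_\alpha$, providing the required $N_\delta$. The main obstacle is the threshold-crossing estimate, which requires a fixed positive gap $\eta$ between the two thresholds; this is precisely why $\delta_\alpha$ must be chosen strictly less than $\alpha$, and why a single $M_\alpha$ can be extracted uniformly on the whole interval $[\delta_\alpha,\alpha)$.
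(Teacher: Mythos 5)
Your proposal is correct and follows essentially the same strategy as the paper: introduce $\limsup_n|u_\delta(\sigma_n)|$ and $\liminf_n|u_\delta(\sigma_n)|$ as functions of $\delta$, use monotonicity and integrality to stabilize both on an interval $[\delta_\al,\al)$, and then use the Cauchy property together with the reverse triangle inequality $d(x,\phi(x))\geq d(x,A)-d(\phi(x),A)$ to force the two stabilized values to agree. The only cosmetic difference is that the paper derives a contradiction from a single threshold-crossing point via pigeonhole, whereas you count all crossing points against the cost budget to get the inequality $|u_{\delta_1}(\sigma_m)|\geq|u_{\delta_2}(\sigma_n)|$ directly.
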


\begin{proof}[Proof of Lemma 4.2]
For $\delta$ with $0<\delta<\al$, let $M_{\sup}^{\delta} = \lim\sup_{n\to\infty}\left|u_{\delta}(\sigma_{n})\right|$ and let $M_{\inf}^{\delta} = \lim\inf_{n\to\infty}\left|u_{\delta}(\sigma_{n})\right|$. Since $\{\sigma_{n}\}$ is a Cauchy sequence, $d_{p}(\sigma_{n},\sigma_\vn)$ is bounded and this directly implies that $M_{\sup}^{\delta}<\infty$.
Also, if $\delta_{1}>\delta_{2}$, then $\left|u_{\delta_{1}}(\sigma_{n})\right|\leq\left|u_{\delta_{2}}(\sigma_{n})\right|$ which means that $M_{\sup}^{\delta_{1}} \leq M_{\sup}^{\delta_{2}}$ and $M_{\inf}^{\delta_{1}}\leq M_{\inf}^{\delta_{2}}$. Therefore, the limits $M_{\sup} = \lim_{\delta\to\al}M_{\sup}^{\delta}$ and $M_{\inf} = \lim_{\delta\to\al}M_{\inf}^{\delta}$ exist and, moreover, there exists a $\delta_{\al}$ such that $M_{\sup} = M_{\sup}^{\delta}$ and $M_{\inf} = M_{\inf}^{\delta}$ whenever $\delta_{\al}\leq\delta<\al$.
Now suppose that $M_{\inf}<M_{\sup}$. Fix $\delta\in(\delta_{\al},\al)$ and let $\eps = \delta - \delta_{\al} >0$. Let $\{\sigma_{n_{k}}\}$ and $\{\sigma_{n_{l}}\}$ be two subsequences of $\{\sigma_{n}\}$ such that $\left|u_{\delta}(\sigma_{n_{k}})\right| = M_{\sup}$ and $\left|u_{\delta_\al}(\sigma_{n_{l}})\right| = M_{\inf}$. Since $\{\sigma_{n}\}$ is a Cauchy sequence, there exists $C>0$ such that $d_{p}(\sigma_{n_{k}},\sigma_{n_{l}})<\eps$ for all $k,l>C$. By assumption, $\left|u_{\delta}(\sigma_{n_{k}})\right| > \left|u_{\delta_\al}(\sigma_{n_{l}})\right|$, which implies that, for any bijection $\phi\colon \sigma_{n_{k}}\to\sigma_{n_{l}}$, there is a point $x\in \sigma_{n_{k}}$ such that $d(x,A) \geq \delta$ and $d(\phi(x),A)< \delta_\al$.  This means that $d(x,\phi(x))>\eps$, leading to $d_{p}(\sigma_{n_{k}},\sigma_{n_{l}})\geq \eps$, which is a contradiction. We then set $M_{\al} = M_{\sup} = M_{\inf}$.
\end{proof}

Given $\al>0$, let $\sigma_{n}^{\al} = u_{\delta_{\al}}(\sigma_{n})$ %be the diagrams which contain points whose persistence is at least $\al$ in the limit, 
and $\sigma_{n,\al} = l_{\delta_{\al}}(\sigma_{n})$.

 \begin{lem}
For any $\al>0$, the sequence $\{\sigma_{n}^{\al}\}$ is a Cauchy sequence in $\DD_{p}(X,A)$.
\end{lem}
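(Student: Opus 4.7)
Fix $\alpha > 0$ and write $\delta = \delta_\alpha$ for brevity. The strategy is to exploit a ``persistence gap'' above $\delta$ that follows from applying Lemma~\ref{lem:delta-alpha} at two nearby thresholds simultaneously. Specifically, choose any $\delta' \in (\delta, \alpha)$; the lemma asserts that for $n$ large enough, $|u_\delta(\sigma_n)| = |u_{\delta'}(\sigma_n)| = M_\alpha$, which forces $\sigma_n$ to have no points with $d(x, A) \in [\delta, \delta')$. Hence every $x \in \sigma_n^\alpha$ in fact satisfies $d(x, A) \geq \delta'$, while every $y \in \sigma_n \setminus \sigma_n^\alpha$ satisfies $d(y, A) < \delta$, creating a uniform positive gap $c := \delta' - \delta > 0$.

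Given $\varepsilon > 0$, I would set $\eta = \tfrac{1}{2}\min\{\varepsilon, c\}$ and use the Cauchy property of $\{\sigma_n\}$ to find $N$ large enough that, for all $n, m \geq N$, the above multiplicity equalities hold and $d_p(\sigma_n, \sigma_m) < \eta$. Picking a near-optimal bijection $\phi$ (between suitable extensions of $\widetilde\sigma_n$ and $\widetilde\sigma_m$ by diagonal multisets) with total $p$-cost less than $2\eta < c$, the key claim is that $\phi$ cannot move any point of $\sigma_n^\alpha$ outside $\sigma_m^\alpha$: such a ``switch'' would force
\[
d(x, \phi(x)) \geq d(x, A) - d(\phi(x), A) \geq \delta' - \delta = c,
\]
contributing at least $c^p > (2\eta)^p$ to the cost of $\phi$ and contradicting the bound. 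Since $|\sigma_n^\alpha| = |\sigma_m^\alpha| = M_\alpha$, the absence of switches implies that $\phi$ restricts to a bijection $\psi \colon \sigma_n^\alpha \to \sigma_m^\alpha$, whose $p$-cost is dominated by the $p$-cost of $\phi$. This yields $d_p(\sigma_n^\alpha, \sigma_m^\alpha) \leq 2\eta \leq \varepsilon$, as required.

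The main obstacle is securing the gap $c > 0$: the naive bound on $d(x, \phi(x))$ for a switch across the single threshold $\delta$ only gives $d(x, \phi(x)) \geq d(x,A) - d(\phi(x), A) \geq 0$, which is useless. It is precisely the simultaneous application of Lemma~\ref{lem:delta-alpha} at $\delta$ and at a strictly larger threshold $\delta' < \alpha$ that upgrades this to a uniform positive lower bound; once the gap is in place, the remainder of the argument is a short cardinality-plus-triangle-inequality estimate.
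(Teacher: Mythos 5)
Your proof is correct and follows essentially the same route as the paper's: both arguments use Lemma~\ref{lem:delta-alpha} at a second threshold in $(\delta_\alpha,\alpha)$ to produce a uniform persistence gap above $\delta_\alpha$, then choose a near-optimal bijection whose total cost is too small to permit any point to cross that gap, so that it restricts to a bijection between the $\alpha$-upper parts. The only differences are notational (your $\delta'$ is the paper's $\delta$, and the paper uses the threshold $(\delta-\delta_\alpha)/2$ where you use $\tfrac12\min\{\varepsilon,c\}$).
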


\begin{proof}[Proof of Lemma 4.3]
Let $\delta_{\al}$ be as in Lemma \ref{lem:delta-alpha} and let $\delta\in(\delta_{\al},\al)$. By Lemma~\ref{lem:delta-alpha}, there is an $N>0$ such that,  for all $n>N$, there is no point $x\in \sigma_{n}$ with $d(x,A)\in[\delta_{\alpha},\delta)$. Let $\eps>0$ and let $\eps_{0} = \min\{\eps,(\delta-\delta_{\al})/2\}$.  
If we increase $N$ so that $d_{p}(\sigma_{m},\sigma_{n})<\eps_{0}$ for all $m,n>N$, then there exists a bijection $\phi\colon \sigma_{m}\to \sigma_{n}$ such that 
\[
	\left(\sum\limits_{x\in \sigma_{m}}d(x,\phi(x))^{p}\right)^{\frac{1}{p}} < \eps_{0} \leq \frac{\delta - \delta_{\al}}{2},
\]
which implies that $\phi(\sigma_{m}^{\al}) = \sigma_{n}^{\al}$. Therefore,
\[
d_{p}(\sigma_{m}^{\al},\sigma_{n}^{\al})\leq\left(\sum\limits_{x\in \sigma_{m}^{\al}}d(x,\phi(x))^{p}\right)^{\frac{1}{p}} < \eps_{0}\leq\eps.\qedhere
\]
\end{proof}

The following lemma shows that the sequence $\{\sigma_{n}^{\al}\}$ converges for arbitrary $\al>0$.

\begin{lem}
For any $\al>0$, there is a diagram $\sigma^{\al}\in\DD_{p}(X,A)$ such that $\lim_{n\to\infty}d_{p}(\sigma_{n}^{\al},\sigma^\al) = 0$, hence $\left|\sigma^{\al}\right| = M_{\al}$ and $u_{\al}(\sigma^{\al}) = \sigma^{\al}$. Moreover, if $\al_{1}>\al_{2}$, we have $\sigma^{\al_{1}}\subset \sigma^{\al_{2}}$.
\end{lem}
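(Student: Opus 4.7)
The plan is to use the Cauchy property together with the completeness of $X$ to construct $\sigma^{\alpha}$ coordinatewise. First, I would pass to a subsequence $\{\sigma_{n_k}^{\alpha}\}$ with $d_p(\sigma_{n_k}^{\alpha},\sigma_{n_{k+1}}^{\alpha}) < 2^{-k}$. By Lemma~\ref{lem:delta-alpha}, for all sufficiently large $k$ one has $|\sigma_{n_k}^{\alpha}| = M_{\alpha}$, so each $\sigma_{n_k}^{\alpha}$ is a multiset of exactly $M_{\alpha}$ points. By the definition of $d_p$, there exist bijections $\phi_k \colon \sigma_{n_k}^{\alpha} \to \sigma_{n_{k+1}}^{\alpha}$ whose $p$-cost is less than $2^{-k+1}$. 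Fixing a labeling $\sigma_{n_{k_0}}^{\alpha} = \{x_1^{k_0},\dots,x_{M_\alpha}^{k_0}\}$, define inductively $x_i^{k+1} = \phi_k(x_i^k)$.

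Each coordinate sequence $\{x_i^k\}_k$ is then Cauchy in $X$, since $d(x_i^k,x_i^{k+1}) \leq 2^{-k+1}$ and the tails telescope. By completeness of $X$, each sequence converges to some $y_i \in X$, and I set $\sigma^{\alpha} = \{y_1,\dots,y_{M_\alpha}\}$. The inequality $d_p(\sigma_{n_k}^{\alpha},\sigma^{\alpha})^p \leq \sum_i d(x_i^k,y_i)^p$ applied to the bijection $x_i^k \mapsto y_i$ shows $\sigma_{n_k}^{\alpha} \to \sigma^{\alpha}$ in $\DD_p(X,A)$, and since $\{\sigma_n^{\alpha}\}$ is Cauchy, the whole sequence converges to $\sigma^{\alpha}$. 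To identify $u_{\alpha}(\sigma^{\alpha}) = \sigma^{\alpha}$ and $|\sigma^{\alpha}| = M_\alpha$, I would invoke Lemma~\ref{lem:delta-alpha} once more: for every $\delta \in [\delta_{\alpha},\alpha)$ and all large enough $k$, every point of $\sigma_{n_k}^{\alpha}$ is at distance at least $\delta$ from $A$, so $d(y_i,A) \geq \delta$ for every such $\delta$, hence $d(y_i,A) \geq \alpha > 0$.

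For the monotonicity statement $\sigma^{\alpha_1} \subset \sigma^{\alpha_2}$ when $\alpha_1 > \alpha_2$, I would use the flexibility afforded by Lemma~\ref{lem:delta-alpha} to arrange $\delta_{\alpha_2} \leq \delta_{\alpha_1} < \alpha_1$, so that $\sigma_n^{\alpha_1} \subset \sigma_n^{\alpha_2}$ as multisets. Consider the multiset differences $\rho^n := \sigma_n^{\alpha_2} - \sigma_n^{\alpha_1}$, which have constant cardinality $M_{\alpha_2} - M_{\alpha_1}$. The key observation is the \emph{pointwise} bound: if $d_p(\sigma_n^{\alpha_2},\sigma_m^{\alpha_2}) < \varepsilon$, then any bijection $\phi$ realizing cost near this value satisfies $d(x,\phi(x)) \leq \varepsilon + o(1)$ for each $x$ (since one term is at most the $\ell^p$-sum). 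Taking $\varepsilon$ smaller than a fixed fraction of the persistence gap $\alpha_1 - \alpha_2$ and using that eventually the points of $\sigma_n^{\alpha_1}$ have distance at least $(\alpha_1+\alpha_2)/2$ from $A$ while those of $\rho^n$ have distance less than $(\alpha_1+\alpha_2)/2$, the bijection $\phi$ must map $\sigma_n^{\alpha_1}$ onto $\sigma_m^{\alpha_1}$ and hence restricts to a bijection $\rho^n \to \rho^m$. This yields a Cauchy sequence $\{\rho^n\}$ in $\DD_p(X,A)$, which by the argument above converges to some $\rho \in \DD_p(X,A)$. Passing to the limit in the identity $\sigma_n^{\alpha_2} = \sigma_n^{\alpha_1} + \rho^n$ then gives $\sigma^{\alpha_2} = \sigma^{\alpha_1} + \rho$, so $\sigma^{\alpha_1} \subset \sigma^{\alpha_2}$.

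The main obstacle is this last step: verifying that near-optimal matchings respect the splitting into high- and low-persistence parts. This is where the spectral-type gap provided by Lemma~\ref{lem:delta-alpha} is essential; without the absence of points with distance to $A$ in an intermediate range, a matching could redistribute high-persistence points to low-persistence ones at low cost, and the decomposition would not pass to the limit.
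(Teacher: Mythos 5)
Your proof follows essentially the same route as the paper's: extract a rapidly convergent subsequence, compose near-optimal bijections to produce $M_{\al}$ Cauchy coordinate sequences in $X$, use completeness of $X$ to pass to the limit, upgrade subsequential convergence to full convergence via the Cauchy property, and exploit the persistence gap from Lemma~\ref{lem:delta-alpha} to force near-optimal matchings to respect the upper/lower decomposition in the monotonicity step. The only point you leave implicit is that the starting index $k_0$ must also satisfy $2^{-k_0+1}<\delta_{\al}$, so that the chosen bijections cannot match an off-diagonal point of $\sigma_{n_k}^{\al}$ to a point of $A$ (which would break the coordinate sequences); this is precisely the role of the paper's requirement $\eps<\delta/2$.
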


\begin{proof}[Proof of Lemma 4.4]
Let $\al>0$, $\delta\in(\delta_{\al},\al)$, and let $N>0$ be such that $\left|\sigma_{n}^{\al}\right| = \left|u_{\delta}(\sigma_{n})\right| = M_{\al}$ for all $n>N$. Let $\eps\in(0,\delta/2)$ and choose a subsequence $\{\sigma_{n_{k}}^{\al}\}$ such that $n_{1}>N$ and $d_{p}(\sigma_{n_{k}}^{\al},\sigma_{m}^{\al}) < 2^{-k}\eps$ for $m\geq n_{k}$. Let $\phi_{k}\colon \sigma_{n_{k}}^{\al}\to \sigma_{n_{k+1}}^{\al}$ be a bijection realizing the $p$-Wasserstein distance between $\sigma_{n_{k}}^{\al}$ and $\sigma_{n_{k+1}}^{\al}$, which, by our choice of $\eps$, maps points outside $A$ to points outside $A$. Let $x^{1},\dots,x^{M_{\al}}$ be points outside $A$ in $\sigma_{n_{1}}^{\al}$ and let $\{x_{k}^{1}\},\dots,\{x_{k}^{M_{\al}}\}$ be sequences such that $x_{1}^{i} = x^{i}$, for $i = 1,\dots,M_{\al}$, and $x_{k+1}^{i} = \phi_{k}(x_{k}^{i})$. By the choice of our diagram subsequence, we get that each $\{x_{k}^{i}\}$ is a Cauchy sequence and we denote the corresponding limits by $\hat{x}^{1},\dots,\hat{x}^{M_{\al}}$ (here we are using the fact that $X$ is complete). Let $\sigma^{\al}$ be the diagram whose points outside $A$ are exactly $\hat{x}^{1},\dots,\hat{x}^{M_{\al}}$, where the multiplicity of each $\hat{x}^i$ is the number of sequences whose limit is $\hat{x}^i$.

For $\eps_{0}$, we choose $K>0$ such that, for all $k> K$, we have $d(x_{k}^{i},\hat{x}^{i})<M_{\al}^{-1/p}\eps_{0}/2$ and $d_{p}(\sigma_{n_{k}},\sigma_{m})<\eps_{0}/2$, for $m\geq n_{k}$. It follows that 
\begin{center}
	$d_{p}(\sigma_{m}^{\al},\sigma^{\al}) \leq d_{p}(\sigma_{m}^{\al},\sigma_{n_{k}}^{\al}) + d_{p}(\sigma_{n_{k}}^{\al},\sigma^{\al}) < \eps_{0}/2 + \eps_{0}/2 = \eps_{0}$.
\end{center}
Hence, $\sigma^{\al}$ is the unique limit of $\sigma_{n}^{\al}$ and does not depend on the choice of bijections $\phi_{k}$, subsequences $\{\sigma_{n_{k}}^{\al}\}$, or $\eps$.

Finally, let $\al_{1}>\al_{2}$. Then points $x\in \sigma_{n}^{\al_{2}}$ such that $x\notin \sigma_{n}^{\al_{1}}$ have $d(x,A)<\delta_{\al_{1}}<\al_{1}$. Repeating the above argument with $\al=\al_{2}, N>0$ such that $\left|\sigma_{n}^{\al_{1}}\right| = \left|u_{\delta_{1}}(\sigma_{n})\right| = M_{\al_{1}}$, and $\left|\sigma_{n}^{\al_{2}}\right| = \left|u_{\delta_{2}}(\sigma_{n})\right| = M_{\al_{2}}$, for $n>N$, where $\delta_{1}\in(\delta_{\al_{1}},\al_{1})$,$\delta_{2}\in(\delta_{\al_{2}},\al_{2})$, and $\eps>0$ such that $\eps<\min\{\delta_{2}/2,(\delta_{1}-\delta_{2})/2\}$ leads to the last statement.
\end{proof}

\begin{lem}
Let $\sigma^{\ast} = \bigcup_{\al>0}\sigma^{\al}$. Then $\sigma^{\ast}\in\DD_{p}(X,A)$ and $\lim_{\al\to 0}d_{p}(\sigma^{\al},\sigma^{\ast}) = 0$.
\end{lem}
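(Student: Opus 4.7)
The proof has two parts: showing $\sigma^* \in \DD_p(X,A)$, and showing $d_p(\sigma^\alpha,\sigma^*) \to 0$ as $\alpha \to 0$.

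Let $C := \sup_n d_p(\sigma_n,\sigma_\vn) < \infty$, which exists because $\{\sigma_n\}$ is Cauchy. Since $\sigma_n^\alpha = u_{\delta_\alpha}(\sigma_n)$ is a sub-multiset of $\sigma_n$, any admissible bijection $\phi\colon \widetilde\sigma_n + \widetilde\alpha \to \widetilde\beta$ with $\widetilde\alpha,\widetilde\beta \in \widetilde{\DD}(A)$ restricts to an admissible bijection from $\sigma_n^\alpha$, yielding $d_p(\sigma_n^\alpha,\sigma_\vn) \leq d_p(\sigma_n,\sigma_\vn) \leq C$. Passing to $n \to \infty$ using the preceding lemma gives $d_p(\sigma^\alpha,\sigma_\vn) \leq C$ for every $\alpha > 0$, and since $\sigma^\alpha$ is a finite multiset of points at distance $\geq \delta_\alpha$ from $A$, comparing with the bijection sending each $x \in \sigma^\alpha$ to a nearest point in $A$ gives
\[
\sum_{x \in \sigma^\alpha} d(x,A)^p \leq d_p(\sigma^\alpha,\sigma_\vn)^p \leq C^p.
\]

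Any finite submultiset $F$ of $\sigma^*$ is contained in some $\sigma^\alpha$ as a submultiset: by the nesting $\sigma^{\alpha_1} \subseteq \sigma^{\alpha_2}$ for $\alpha_1 > \alpha_2$, the multiplicity of each $x \in F$ in $\sigma^\alpha$ is monotone nondecreasing as $\alpha \searrow 0$ and tends to its multiplicity in $\sigma^*$, so taking $\alpha$ below the minimum of finitely many witnesses suffices. Hence $\sum_{x \in F} d(x,A)^p \leq C^p$ for every finite $F$, so $\sum_{x \in \sigma^*} d(x,A)^p \leq C^p$. For any $\varepsilon > 0$, choosing $a_x \in A$ with $d(x,a_x) \leq d(x,A) + \eta_x$ with the $\eta_x$ summably small yields $d_p(\sigma^*,\sigma_\vn)^p \leq C^p + \varepsilon$, so $\sigma^* \in \DD_p(X,A)$.

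For the convergence, the submultiset inclusion $\sigma^\alpha \subseteq \sigma^*$ enables the candidate bijection that is the identity on the $\sigma^\alpha$-part and sends each point of $\sigma^* \setminus \sigma^\alpha$ to an almost-nearest point of $A$ (added to the enlargement $\widetilde\beta_1 \in \widetilde{\DD}(A)$), giving
\[
d_p(\sigma^\alpha,\sigma^*)^p \leq \sum_{x \in \sigma^* \setminus \sigma^\alpha} d(x,A)^p.
\]
The right-hand side is monotone nonincreasing in $\alpha$ since $\sigma^\alpha$ increases to $\sigma^*$ as $\alpha \searrow 0$. Given $\varepsilon > 0$, the finiteness of $\sum_x d(x,A)^p$ furnishes a finite $F \subset \sigma^*$ with $\sum_{x \in \sigma^* \setminus F} d(x,A)^p < \varepsilon$, and for $\alpha$ small enough $F \subset \sigma^\alpha$, so the right-hand side is $< \varepsilon$.

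The main technical obstacle, in my view, is the bookkeeping with multiplicities: one must interpret $\sigma^\alpha \subseteq \sigma^*$ as a genuine sub-multiset inclusion, and handle the fact that the infimum defining $d_p(\cdot,\sigma_\vn)$ over pairs in $\widetilde{\DD}(A)$ need not be attained when $A$ is not proximinal. Both issues are resolved uniformly by approximating $\sigma^*$ by finite submultisets and choosing $\eta_x$-close elements in $A$.
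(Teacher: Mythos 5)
Your proof is correct and follows essentially the same route as the paper: bound $d_p(\sigma^{\al},\sigma_\vn)$ uniformly in $\al$ via the Cauchy bound on $\{\sigma_n\}$, deduce $\sum_{x\in\sigma^{\ast}}d(x,A)^p<\infty$ by exhausting $\sigma^{\ast}$ with the nested $\sigma^{\al}$, and then bound $d_p(\sigma^{\al},\sigma^{\ast})^p$ by the tail sum $\sum_{x\in\sigma^{\ast}\setminus\sigma^{\al}}d(x,A)^p$, which vanishes as $\al\to 0$. The only (cosmetic) difference is that the paper writes the tail as $d_p(l_{\al}(\sigma^{\ast}),\sigma_\vn)^p$, while you work with $\sigma^{\ast}\setminus\sigma^{\al}$ directly and are more explicit about multiplicities and the non-attainment of nearest points in $A$.
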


\begin{proof}[Proof of Lemma 4.5]
Let $\al>0$ and $n\in\NN$ (big enough) such that $d_{p}(\sigma^{\al},\sigma_{n}^{\al})<1$. Then
\begin{center}
	$d_{p}(\sigma^{\al},\sigma_\vn) \leq d_{p}(\sigma^{\al},\sigma_{n}^{\al}) + d_{p}(\sigma_{n}^{\al},\sigma_\vn) \leq 1+C$
\end{center}
    for some constant $C>0$, since $\{\sigma_{n}\}$ is a Cauchy sequence. Since the right-hand side of the preceding equation is independent of $\al$, we get $d_{p}(\sigma^{\ast},\sigma_\vn)\leq 1+C$.

Finally, note that 
\[
	d_{p}(\sigma^{\al},\sigma^*)^{p} \leq d_{p}(l_{\al}(\sigma^{\ast}),\sigma_\vn)^{p} = \sum\limits_{\substack{x\in \sigma^{\ast} \\ d(x,A)<\al}}d(x,A)^{p} \to 0 \  \text{as} \ \al\to 0. \qedhere
\]
\end{proof}

The last step in the proof of the completeness of $\DD_{p}(X,A)$ is the following lemma.

\begin{lem}
For each $\eps>0$, there exists an $\al_{0}>0$ such that, for all $n\in\NN$ and $\al\in(0,\al_{0}]$, we have $d_{p}(\sigma_{n,\al},\sigma_\vn)<\eps$ and, therefore, $d_{p}(\sigma_{n}^{\al},\sigma_{n})<\eps$.
\end{lem}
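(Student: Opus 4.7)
The plan is to first reduce the second inequality to the first via left-invariance. Since $\sigma_n = \sigma_n^\al + \sigma_{n,\al}$ in the monoid structure, left-invariance of $d_p$ gives
\[
d_p(\sigma_n^\al,\sigma_n) = d_p(\sigma_n^\al,\sigma_n^\al+\sigma_{n,\al}) = d_p(\sigma_\vn,\sigma_{n,\al}),
\]
so it suffices to show the first estimate $d_p(\sigma_{n,\al},\sigma_\vn) < \eps$. Using the description of the distance to the empty diagram, this quantity is simply
\[
d_p(\sigma_{n,\al},\sigma_\vn)^p \;=\; \sum_{x\in\sigma_n,\ d(x,A)<\delta_\al} d(x,A)^p,
\]
and we must bound this uniformly in $n$ for $\al$ small.

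The strategy then splits along the size of $n$. Using that $\{\sigma_n\}$ is Cauchy, I fix $N$ large enough that $d_p(\sigma_n,\sigma_N)$ is as small as required for all $n\geq N$. For the finitely many indices $n\leq N$, each sum $\sum_{x\in\sigma_n}d(x,A)^p$ is a convergent series (since $d_p(\sigma_n,\sigma_\vn)<\infty$), so its tail $\sum_{d(x,A)<\delta_\al}d(x,A)^p$ can be made $<\eps^p$ by taking $\al$ small enough; I take the minimum of the corresponding thresholds to obtain a single $\al_0'$ working for all $n\leq N$.

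For $n>N$, fix $\eta>0$ small and choose an almost-optimal bijection $\phi\colon \sigma_n+\widetilde\alpha_1\to \sigma_N+\widetilde\alpha_2$ (with $\widetilde\alpha_i\in\widetilde{\DD}(A)$) satisfying $\sum d(x,\phi(x))^p\leq d_p(\sigma_n,\sigma_N)^p+\eta$. I then classify each $x\in\sigma_{n,\al}$ by its image:
\begin{itemize}
\item[(i)] $\phi(x)\in A$: then $d(x,A)\leq d(x,\phi(x))$, so $d(x,A)^p\leq d(x,\phi(x))^p$.
\item[(ii)] $\phi(x)=y\in\sigma_N$ with $d(y,A)<2\delta_\al$: bound $d(x,A)^p\leq 2^{p-1}\bigl(d(x,y)^p+d(y,A)^p\bigr)$ and note that the sum over such $y$ of $d(y,A)^p$ is at most $d_p(\sigma_{N,2\al},\sigma_\vn)^p$ (since $\phi$ is injective on this set).
\item[(iii)] $\phi(x)=y\in\sigma_N$ with $d(y,A)\geq 2\delta_\al$: since $d(x,A)<\delta_\al$, we get $d(x,y)\geq d(y,A)-d(x,A)\geq\delta_\al>d(x,A)$, so again $d(x,A)^p\leq d(x,\phi(x))^p$.
\end{itemize}
Summing, one obtains an estimate of the form
\[
d_p(\sigma_{n,\al},\sigma_\vn)^p \;\leq\; C_p\,d_p(\sigma_n,\sigma_N)^p \;+\; 2^{p-1}\,d_p(\sigma_{N,2\al},\sigma_\vn)^p \;+\; C_p\eta,
\]
for a universal constant $C_p$ depending only on $p$. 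Letting $\eta\to 0$, choosing $N$ so that $C_p\,d_p(\sigma_n,\sigma_N)^p<\eps^p/2$, and then (using step 2 applied to the single diagram $\sigma_N$) choosing $\al_0\leq\al_0'$ so that $2^{p-1}d_p(\sigma_{N,2\al},\sigma_\vn)^p<\eps^p/2$ whenever $\al\leq\al_0$, finishes the proof.

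The main obstacle is the uniformity in $n$: for any individual diagram the tail vanishes as $\al\to 0$, but a priori nothing prevents the rate from deteriorating along the sequence. The Cauchy condition supplies a single reference diagram $\sigma_N$, and the case analysis above is what rigorously transfers the smallness of $\sigma_N$'s tail to all $\sigma_n$ with $n\geq N$, absorbing the discrepancy into the transport cost $d_p(\sigma_n,\sigma_N)$. The slightly delicate point is case (ii), where one must use the injectivity of $\phi$ to ensure that the sum $\sum_y d(y,A)^p$ is controlled by $d_p(\sigma_{N,2\al},\sigma_\vn)^p$ and not by a larger quantity.
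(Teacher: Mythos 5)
Your proof is correct, and it is organized differently from the paper's. The paper argues by contradiction: assuming a subsequence $\{\sigma_{n_i}\}$ with $d_p(\sigma_{n_i,\al_i},\sigma_\vn)\geq\eps$ and $\al_i\searrow 0$, it fixes a reference diagram $\sigma_{n_k}$ with small tail, takes a near-optimal bijection $\phi_i\colon\sigma_{n_i}\to\sigma_{n_k}$, and modifies it (sending to $A$ the points of $\sigma_{n_i,\al_i}$ whose images lie outside $\sigma_{n_k,\al_j}$, using the gap $\delta_{\al_j}>2\al_i$ to show this does not increase the cost) to contradict the lower bound $d_p(\sigma_{n_i,\al_i},\sigma_{n_k,\al_j})\geq\eps-\delta$. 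You instead give a direct quantitative estimate
\[
d_p(\sigma_{n,\al},\sigma_\vn)^p\leq 2^{p-1}\,d_p(\sigma_n,\sigma_N)^p+2^{p-1}\hspace{-1em}\sum_{\substack{y\in\sigma_N\\ d(y,A)<2\delta_\al}}\hspace{-1em}d(y,A)^p,
\]
via a three-way case split on a near-optimal matching; both arguments rest on the same two ingredients (the vanishing tail of one fixed diagram plus the Cauchy condition to transport it), but your version avoids the extraction of subsequences and makes the dependence of $\al_0$ on $N$ explicit, at the price of the convexity factor $2^{p-1}$ in the middle case, which the paper's choice of thresholds sidesteps. All three cases check out: in (i) and (iii) the comparison $d(x,A)\leq d(x,\phi(x))$ is valid, and in (ii) the injectivity of $\phi$ does control the sum of $d(\phi(x),A)^p$ by the tail of $\sigma_N$ below level $2\delta_\al$, which tends to $0$ since $2\delta_\al<2\al$. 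One cosmetic remark: the quantity you call $d_p(\sigma_{N,2\al},\sigma_\vn)^p$ is, in the paper's notation, the sum over points of $\sigma_N$ with $d(y,A)<\delta_{2\al}$, which need not contain all points with $d(y,A)<2\delta_\al$; you should simply write the tail sum $\sum_{y\in\sigma_N,\,d(y,A)<2\delta_\al}d(y,A)^p$ directly, which is what your argument actually uses and which still vanishes as $\al\to 0$.
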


\begin{proof}[Proof of Lemma 4.6]
Suppose there is an $\eps >0$ such that, for all $\al>0$, there exists  $n_{\al}\in\NN$ with $d_{p}(\sigma_{n,\al},\sigma_\vn)\geq\eps$. Let $\{\al_{i}\}_{i\in\NN}$ be a sequence of positive values monotonically decreasing to $0$. Since $\al_{i}\to 0$, we have $n_{\al_{i}}\to\infty$ and we find a subsequence $\{\sigma_{n_{i}}\}$ such that $d_{p}(\sigma_{n_{i},\al_{i}},\sigma_\vn)\geq\eps$. Let $\delta\in(0,\eps/4)$ and choose $k\in\NN$ such that $d_{p}(\sigma_{n_{k}},\sigma_{n_{i}}) < \delta$, for all $i\geq k$. Now, pick $j\geq k$ such that $d_{p}(\sigma_{n_{k},\al_{i}},\sigma_\vn)<\delta$ for all $i\geq j$. This implies that
\begin{center}
	$d_{p}(\sigma_{n_{i},\al_{i}},\sigma_{n_{k},\al_{j}}) \geq d_{p}(\sigma_{n_{i},\al_{i}},\sigma_\vn) - d_{p}						(\sigma_\vn,\sigma_{n_{k},\al_{j}}) \geq\eps - \delta >3\delta$.
\end{center}
For $i\geq j$ let $\phi_{i}\colon \sigma_{n_{i}}\to \sigma_{n_{k}}$ be a bijection such that $\sum_{x\in \sigma_{n_{i}}}d(x,\phi_{i}(x))^{p} < 2\delta^{p}$. Then also $\sum_{x\in \sigma_{n_{i},\al_{i}}}d(x,\phi_{i}(x))^{p} < 2\delta^{p}$.

Since $\delta_{\al_{j}}>0$, we can pick $l>j$ such that $\delta_{\al_{j}}>2\al_{i}$ for all $i\geq l$. If we now take $x\in \sigma_{n_{i},\al_{i}}$ such that $\phi_{i}(x)\notin \sigma_{n_{k},\al_{j}}$, we see that
\begin{center}
	$d(x,\phi_{i}(x)) \geq \left|d(x,A) - d(\phi_{i}(x),A)\right| \geq \delta_{\al_{j}}-\al_{i} \geq \al_{i}\geq d(x,A)$,
\end{center}
with $i\geq l$. Now let $\hat{\phi}_{i}\colon \sigma_{n_{i},\al_{i}} \to \sigma_{n_{k},\al_{j}}$ be a bijection such that 
\begin{center}
	$\hat{\phi}_{i}(x) = \begin{cases}
		\phi_{i}(x)&\text{if } x\in \sigma_{n_{i},\al_{i}} \text{ and } \phi_{i}(x)\in \sigma_{n_{k},\al_{j}}\\
		A &\text{if } x\in \sigma_{n_{i},\al_{i}} \text{ and } \phi_{i}(x)\notin \sigma_{n_{k},\al_{j}}
		\end{cases}$
\end{center}
and also points $y\in \sigma_{n_{k},\al_{j}} \text{ with } \phi_{i}^{-1}(y)\notin \sigma_{n_{i},\al_{i}}$ are getting mapped to $A$. Then, for $i\geq l$, we have 
\begin{center}
	\begin{align*}
		\sum\limits_{x\in \sigma_{n_{i},\al_{i}}}d(x,\hat{\phi}_{i}(x))^{p} &= \sum\limits_{\substack{x\in 				\sigma_{n_{i},\al_{i}} \\ \phi_{i}(x)\in \sigma_{n_{k},\al_{j}}}}\hspace*{-0.3cm}d(x,\phi_{i}(x))^{p} 			+ \sum\limits_{\substack{x\in \sigma_{n_{i},\al_{i}} \\ \phi_{i}(x)\notin \sigma_{n_{k},										\al_{j}}}}d(x,A)^{p}
		+ \sum\limits_{\substack{y\in \sigma_{n_{k},\al_{j}} \\ \phi_{i}^{-1}(y)\notin \sigma_{n_{i},				\al_{i}}}}d(y,A)^{p}\\[10pt]
		& \leq \sum\limits_{\substack{x\in \sigma_{n_{i},\al_{i}} \\ \phi_{i}(x)\in \sigma_{n_{k},										\al_{j}}}}\hspace*{-0.3cm}d(x,	\phi_{i}(x))^{p} +\hspace*{-0.3cm} \sum\limits_{\substack{x\in 				\sigma_{n_{i},\al_{i}} \\ \phi_{i}(x)\notin \sigma_{n_{k},\al_{j}}}}\hspace*{-0.3cm}d(x,A)^{p} + \delta^{p}
		< 2\delta^{p} + \delta^{p} = 3\delta^{p}.\\
	\end{align*}
\end{center}
Therefore, if $i\geq l$, we have that $d_{p}(\sigma_{n_{i},\al_{i}},\sigma_{n_{k},\al_{j}})^p < 3\delta^{p}$, which is a contradiction.
\end{proof}

The triangle inequality $d_{p}(\sigma^{\ast},\sigma_{n}) \leq d_{p}(\sigma^{\ast},\sigma^{\al}) + d_{p}(\sigma^{\al},\sigma_{n}^{\al}) + d_{p}(\sigma_{n}^{\al},\sigma_{n})$ together with the aforementioned lemmas finally gives us Theorem~\ref{T:COMPLETENESS}.\\

Let us now prove the separability  of $\DD_p(X,A)$.

\begin{prop}
\label{prop:separability}
Let $(X,A)\in \MetPair$. If $X$ is separable, then $\DD_p(X,A)$ is separable.
\end{prop}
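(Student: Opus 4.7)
The plan is to construct a countable dense subset of $\DD_p(X,A)$ from a countable dense subset $S$ of $X$, by showing every diagram can be approximated first by a diagram of finite multiplicity, and then by a finite-multiplicity diagram with all off-diagonal entries drawn from $S$. Specifically, let $S \subset X$ be a countable dense subset and let
\[
\calZ = \{\, \sigma \in \DD_p(X,A) : \sigma \text{ admits a finite representative } \widetilde\sigma \in \widetilde\DD(S)\,\}.
\]
This set is countable, being a countable union of countable sets indexed by the cardinality of the representative. The goal is to prove $\calZ$ is dense.

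\smallskip

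First, I would reduce to finite diagrams. Fix $\sigma \in \DD_p(X,A)$ and $\varepsilon > 0$. Pick a representative $\widetilde\sigma \in \widetilde\DD(X)$ containing no points of $A$. From the definition of $d_p(\sigma,\sigma_\vn)$ as an infimum, for any $\eta > 0$ there exist $\widetilde\alpha,\widetilde\beta \in \widetilde\DD(A)$ and a bijection $\phi\colon \widetilde\sigma+\widetilde\alpha \to \widetilde\beta$ realising $\widetilde d_p(\widetilde\sigma+\widetilde\alpha,\widetilde\beta)^p < d_p(\sigma,\sigma_\vn)^p + \eta$. Restricting $\phi$ to $\widetilde\sigma$ and using $d(x,A) \leq d(x,\phi(x))$ gives
\[
\sum_{x\in\widetilde\sigma} d(x,A)^p \leq d_p(\sigma,\sigma_\vn)^p < \infty.
\]
For $\alpha > 0$ the upper part $u_\alpha(\sigma)$ is finite since every point contributes at least $\alpha^p$ to the convergent sum; moreover, since the tails of a convergent series vanish, we may choose $\alpha > 0$ so that $\sum_{x\in l_\alpha(\widetilde\sigma)} d(x,A)^p < (\varepsilon/2)^p$. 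Writing $\sigma = u_\alpha(\sigma) + l_\alpha(\sigma)$ in $\DD_p(X,A)$ and using the left-invariance of $d_p$,
\[
d_p(\sigma, u_\alpha(\sigma)) \leq d_p(l_\alpha(\sigma), \sigma_\vn) \leq \Bigl(\sum_{x\in l_\alpha(\widetilde\sigma)} d(x,A)^p\Bigr)^{1/p} < \varepsilon/2.
\]

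\smallskip

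Second, I would approximate the finite diagram $u_\alpha(\sigma) = \mset{x_1,\ldots,x_k}$ by one with entries in $S$. Density of $S$ in $X$ lets us pick $s_i \in S$ with $d(x_i,s_i) < \varepsilon/(2k^{1/p})$ for each $i = 1,\ldots,k$. Setting $\tau = \mset{s_1,\ldots,s_k} \in \calZ$, the identity bijection on the underlying index set yields
\[
d_p(u_\alpha(\sigma),\tau) \leq \Bigl(\sum_{i=1}^k d(x_i,s_i)^p\Bigr)^{1/p} < \varepsilon/2,
\]
and the triangle inequality gives $d_p(\sigma,\tau) < \varepsilon$. This shows $\calZ$ is dense, completing the proof.

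\smallskip

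I expect no serious obstacle. The only subtle step is the inequality $\sum_{x \in \widetilde\sigma} d(x,A)^p \leq d_p(\sigma,\sigma_\vn)^p$, which requires picking near-optimal witnesses and noting that points of $A$ in the representatives contribute zero. The rest is routine once the $\alpha$-upper/lower decomposition from Section~\ref{S:COMPLETENESS} is available.
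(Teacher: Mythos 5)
Your proof is correct and follows essentially the same route as the paper's: take the countable set of finite-multiplicity diagrams supported on a countable dense $S\subset X$, truncate $\sigma$ to its $\alpha$-upper part using $d_p(\sigma,u_\alpha(\sigma))\leq d_p(l_\alpha(\sigma),\sigma_\vn)$, and then perturb the finitely many remaining points into $S$. The only difference is that you spell out the bound $\sum_{x\in\widetilde\sigma}d(x,A)^p\leq d_p(\sigma,\sigma_\vn)^p$ justifying the choice of $\alpha$, which the paper leaves implicit.
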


\begin{proof}[Proof of Proposition 4.7]
Let $S$ be a countable dense subset of $X$ and let $\hat{S} \subset \DD_p(X,A)$ be the set of persistence diagrams with finite total multiplicity and with points in $S$, that is,
\[
\hat{S} = \{\sigma \in \DD_p(X,A) : |\sigma| < \infty \text{ and } x \in S\ \text{ for all } x \in \sigma\}.
\]
Let $\sigma \in \DD_p(X,A)$. Then, for each $\varepsilon > 0$, we can find $\al > 0$ such that $d_p (l_\al (\sigma),\sigma_\vn) < \eps/2$. Then, we have
$d_p (\sigma,u_\al (\sigma)) \leq d_p (l_\al (\sigma),\sigma_\vn ) < \eps/2$. Since $S^{|u_\al (\sigma)|}$ is dense in $X^{|u_\al (\sigma)|}$, we can find $\sigma_s \in \hat{S}$ such that $d_p(\sigma_s,u_\al (\sigma)) < \eps/2$. Then, $d_p (\sigma,\sigma_s ) \leq d_p (\sigma,u_\al (\sigma)) + d_p (\sigma_s,u_\al(\sigma)) < \eps$,
which implies that $\hat{S}$ is dense.

Note that $\hat{S} = \cup^\infty_{m=0} \hat{S}_m$, where $\hat{S}_m= \{\sigma \in \hat{S} : |\sigma| = m\}$. Each $\hat{S}_m$ can be embedded into $S^{m}$, thus it is countable. Hence, $\hat{S}$ is countable. 
\end{proof}

\begin{rem}
In \cite{bubenik1}, the authors consider completions of spaces of persistence diagrams in the more general context of pairs $(X,A)$ with $X$ an extended pseudometric space (i.e.\ a space in which the distance between points could also be zero or infinite). They define $\overline{\DD}_p(X,A)$ as the set of countable persistence diagrams such that, up to removing a finite subdiagram, have finite $p$-persistence. Let  $X/A = (X \setminus A) \cup {A}$ denote the quotient set obtained by
collapsing $A$ to a point and let $(X,A/\sim_0)$ be the extended metric space obtained by identifying points with zero distance. 
Proposition 6.16 in \cite{bubenik1} (cf.\ \cite[5th Theorem on p.\ 350]{bubenik-hartsock}) asserts that $(\overline{\DD}_p(\overline{X},A/\sim_0),d_p)$ is complete if and only if $(X/A,\overline{d}_1)$ is complete, where
$\overline{d}_1$ is the metric on on $X/A$ given  by
$d_1(x,y) = \min(d(x, y), ||(d(x, A), d(y, A))||)$. In our context, $X$ is a metric space in the usual sense and, therefore, $(\overline{\DD}_p(\overline{X},A/\sim_0),d_p) = (\DD_p(\overline{X},A),d_p)$. Thus, one may obtain an alternative characterization of the completeness of our $\DD_p(X,A)$ via Proposition 6.16 in \cite{bubenik1} . 
\end{rem}

\subsection{Fr\'echet means}
We now consider the existence of Fr\'echet means for probability measures on $\DD_p(X,A)$. Following the arguments in \cite[Section 3.2]{mileyko1}, we will establish a characterization of totally bounded sets in the space of persistence diagrams $\DD_p(X,A)$, $1\leq p< \infty$ (see Proposition \ref{prop:totally bounded sets}), which is the main ingredient in \cite{mileyko1} to prove the existence of Fr\'echet mean sets for probability measures with compact support. Before carrying on, we make the following elementary observation.

% PROPOSITION

\begin{prop}
If $(X,A)\in \MetPair$ and $X\neq A$, then $\DD_p(X,A)$ is not totally bounded. In particular, $\DD_p(X,A)$ is not compact.
\end{prop}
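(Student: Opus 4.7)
The plan is to exhibit an explicit infinite sequence of persistence diagrams whose pairwise distances stay bounded away from zero, which directly contradicts total boundedness. Since $A$ is closed in $X$ and $X\neq A$, I can pick a basepoint $x_0 \in X\setminus A$ and set $r = d(x_0, A) > 0$. For each positive integer $n$, define $\sigma_n \in \DD_p(X,A)$ to be the finite diagram consisting of $n$ copies of $x_0$ (this lies in $\DD_p(X,A)$ by the finite-multiplicity lemma proved earlier in the preliminaries). I expect to show that, for all $m > n \geq 0$,
\[
d_p(\sigma_m, \sigma_n) = r\,(m-n)^{1/p},
\]
so in particular $d_p(\sigma_m,\sigma_n) \geq r$ whenever $m \neq n$.

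For the upper bound, I would fix any $a \in A$, match the $n$ copies of $x_0$ in $\sigma_m$ to the $n$ copies of $x_0$ in $\sigma_n$ at cost zero, and pad $\sigma_n$ by $(m-n)\mset{a}$ to accommodate the remaining copies; this yields $\widetilde d_p(\sigma_m, \sigma_n+(m-n)\mset{a})\leq (m-n)^{1/p}\,d(x_0,a)$, and taking the infimum over $a\in A$ gives $d_p(\sigma_m,\sigma_n)\leq r(m-n)^{1/p}$. For the matching lower bound, I would take arbitrary $\widetilde\alpha,\widetilde\beta\in\widetilde{\DD}(A)$ and any bijection $\phi\colon \sigma_m+\widetilde\alpha\to\sigma_n+\widetilde\beta$, then observe that at most $n$ of the $m$ copies of $x_0$ on the left can be sent to copies of $x_0$ on the right, so the remaining $m-n$ copies are mapped into $A$, each contributing at least $r^p$ to the $p$-th power of the Wasserstein cost. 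Hence that cost is bounded below by $(m-n)\,r^p$, and taking the infimum over $\widetilde\alpha,\widetilde\beta$ and $\phi$ completes the equality.

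With this estimate in hand, the sequence $\{\sigma_n\}_{n\in\NN}$ has all pairwise distances at least $r$, so $\DD_p(X,A)$ admits no finite $(r/2)$-net and is therefore not totally bounded. Since every compact metric space is totally bounded, $\DD_p(X,A)$ fails to be compact as well.

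I do not anticipate a genuine obstacle here; the only mildly delicate point is bookkeeping the lower bound across the infimum over $\widetilde\alpha,\widetilde\beta\in\widetilde{\DD}(A)$ that appears in the definition of $d_p$, ensuring that padding both sides with auxiliary points of $A$ cannot reduce the number of copies of $x_0$ that must be matched to $A$ below $m-n$.
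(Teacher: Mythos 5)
Your proof is correct and uses the same witness sequence $\sigma_n = n\mset{x_0}$ as the paper; the paper simply takes the shortcut of computing $d_p(\sigma_n,\sigma_\varnothing)=n^{1/p}d(x_0,A)\to\infty$ and concluding from unboundedness, whereas you establish the (stronger, but unnecessary) uniform pairwise separation. Both arguments are sound and essentially identical in substance.
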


\begin{proof}
Fix $x\in X\setminus A$ and consider the sequence of diagrams $\{\sigma_n\}$ such that $\sigma_n = n\mset{x}$. Then $d_p(\sigma_n,\sigma_\vn)=n^{1/p}d(x,A)$, which is not bounded. 
\end{proof}

The following definition adapts Definitions 17, 18 and 20 from \cite{mileyko1} to our setting.

% DEFINITION

\begin{defn}
Let $(X,A)\in \MetPair$ and let $S \subset \DD_p(X,A)$.
\begin{enumerate}
    \item  The set $S$ is \textit{birth-death bounded} if the set $\{x\in X : x\in \sigma \text{ for some }\sigma\in S\}$ is bounded.
    \item The set $S$ is \textit{off-diagonally birth-death bounded} if, for all $\eps>0$, the set $u_\eps(S)$ is birth-death bounded.
    \item The set $S$ is  \emph{uniform} if, for all $\eps>0$, there exists $\delta>0$ such that $d_p(l_\delta(\sigma),\sigma_\vn)\leq \eps$ for all $\sigma\in S$.
\end{enumerate}
\end{defn}

These conditions allow us to characterize totally bounded subsets of the space of diagrams, i.e. subsets $S\subset \DD_p(X,A)$ such that, for each  $\varepsilon > 0$, there exists a finite collection of open balls in $\DD_p(X,A)$ of radius $\varepsilon$  whose union contains $S$. The proof of Proposition \ref{prop:totally bounded sets} is a slight modification of that of \cite[Theorem 21]{mileyko1}. Observe again that our definition for the objects $u_\al$ and $l_\al$ differs slightly from that in \cite{mileyko1} due to our different definition for the persistence of points. 

Recall that a metric space $X$ is \textit{proper} if it satisfies the Heine--Borel property, i.e.\ if every closed and bounded subset of $X$ is compact (equivalently, if every closed ball in $X$ is compact). Note that every proper metric space is complete. 

\begin{prop}\label{prop:totally bounded sets}
Let $(X,A)\in \MetPair$ with $X$ a proper metric space. Then, a set $S \subset \DD_p(X,A)$ is totally bounded if and only if it is bounded, off-diagonally birth-death bounded, and uniform.
\end{prop}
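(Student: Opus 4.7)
The plan is to prove both implications following the pattern of \cite[Theorem 21]{mileyko1}, adapted to our metric-pair setting where the ``persistence'' of a point $x$ is the distance $d(x,A)$ rather than $|b-d|$. Throughout I shall use the identity $d_p(\sigma,u_\alpha(\sigma))^p \leq d_p(l_\alpha(\sigma),\sigma_\vn)^p = \sum_{x \in \sigma,\; d(x,A)<\alpha} d(x,A)^p$, which lets one control a diagram by its $\alpha$-upper part up to the $p$-mass of its $\alpha$-lower part.

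For the forward direction, assume $S$ is totally bounded. Boundedness is immediate. For off-diagonal birth-death boundedness, fix $\varepsilon>0$ and cover $S$ by balls $B_{\varepsilon/4}(\sigma_1),\dots,B_{\varepsilon/4}(\sigma_N)$. For $\sigma \in B_{\varepsilon/4}(\sigma_j)$, take a near-optimal bijection $\phi\colon \sigma \to \sigma_j$; any $x \in \sigma$ with $d(x,A) \geq \varepsilon$ must have $d(x,\phi(x)) \leq \varepsilon/2$ (the individual term cannot exceed the total $d_p$-distance), so in particular $\phi(x)$ lies off-diagonal in the finite diagram $u_{\varepsilon/2}(\sigma_j)$. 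Hence $u_\varepsilon(\sigma)$ is contained in the $(\varepsilon/2)$-neighbourhood of the finite set $\bigcup_j u_{\varepsilon/2}(\sigma_j)$, which is bounded. For uniformity, fix $\varepsilon>0$ and again cover $S$ by $B_{\varepsilon/4}(\sigma_j)$. Since each $\sigma_j$ is a $p$-persistence diagram, $d_p(l_\delta(\sigma_j),\sigma_\vn)\to 0$ as $\delta \to 0$, and finiteness of the cover yields a single $\delta>0$ with $d_p(l_\delta(\sigma_j),\sigma_\vn)<\varepsilon/4$ for all $j$. Given $\sigma \in B_{\varepsilon/4}(\sigma_j)$ and a near-optimal matching $\phi\colon \sigma \to \sigma_j$, one splits the sum defining $d_p(l_{\delta/2}(\sigma),\sigma_\vn)^p$ according to whether $\phi(x)\in l_\delta(\sigma_j)$ or $\phi(x)$ satisfies $d(\phi(x),A) \geq \delta$ (in which case $d(x,\phi(x))\geq\delta/2$, so the contribution is dominated by $d_p(\sigma,\sigma_j)^p$); this yields a uniform bound and, after shrinking $\delta$, the required inequality.

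For the reverse direction, let $\varepsilon>0$ and use uniformity to choose $\delta>0$ with $d_p(\sigma,u_\delta(\sigma))<\varepsilon/2$ for all $\sigma \in S$. By boundedness, $d_p(\sigma,\sigma_\vn)\leq M$ for a fixed $M$, and since each off-diagonal point of $u_\delta(\sigma)$ contributes at least $\delta^p$ to this sum, we get $|u_\delta(\sigma)|\leq K:=\lceil (M/\delta)^p\rceil$. By off-diagonal birth-death boundedness, all such points lie in a common bounded set $B \subset X$, and since $X$ is proper, $\overline{B}$ is compact. Therefore $u_\delta(S)$ embeds into the totally bounded set of multisets of size at most $K$ drawn from $\overline{B}\cup\{a_0\}$ (for any basepoint $a_0 \in A$), realized as a quotient of the compact set $\overline{B}^K$. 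Covering $u_\delta(S)$ by finitely many $(\varepsilon/2)$-balls and using the triangle inequality gives a finite $\varepsilon$-cover of $S$, proving total boundedness.

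The main obstacle will be the uniformity step in the forward direction: one must combine the finite cover of $S$ with the splitting of matchings so that the $l_\delta$-mass of an arbitrary $\sigma\in S$ is controlled uniformly by the $l_\delta$-masses of the centres $\sigma_j$ together with the matching cost, without double counting. This bookkeeping, carried out in \cite{mileyko1} in the Euclidean setting, is what requires the most care in our abstract metric-pair setting, since the diagonal $A$ is no longer a copy of $\R$ and we lose the convenient orthogonal projection.
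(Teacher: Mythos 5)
Your proposal is correct and follows essentially the same route as the paper's proof: the forward direction uses the same matching-splitting arguments (points far from $A$ must be matched to off-diagonal points of the ball centres, and the $l_\delta$-mass of $\sigma$ is split according to whether the matching sends a point into $l_\delta$ of the centre or not), and the reverse direction uses the same bound on $|u_\delta(\sigma)|$ together with properness of $X$. The only cosmetic difference is that you obtain total boundedness of $u_\delta(S)$ as the continuous image of a compact set of tuples, whereas the paper constructs an explicit $\varepsilon$-net consisting of the subdiagrams of a fixed finite diagram built from a net of the bounded region.
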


\begin{proof}
First, we prove the ``if'' statement. Assume then that $S\subset \DD_p(X,A)$ is totally bounded. Then, in particular, $S$ is bounded. 
Now, let $\eps > 0$, take $0 < \delta < \eps/2$, and let $B_n = B(\sigma_n ,\delta)$, for $n = 1,\dots, N$, be a collection of balls of radius $\delta$ which cover $S$. For each $\sigma_n$ we can find a ball $C_n\subset X$ such that $x\in C_n$ for $x \in  \sigma_n$ with $d(x,A)\geq \eps$, and $d(x,A) < \eps/2$ for all $x \in \sigma_n$ with $x \not\in C_n$. Let $C$ be a ball containing $C_1\cup\dots\cup C_N$. Also, we can find $\al> 0$ such that $d_p (l_\al (\sigma_n),\sigma_\vn) \leq \eps/4$ for $n = 1,\dots,N$.

Let us prove that $S$ is off-diagonally birth-death bounded. We will proceed by contradiction. Suppose that $\sigma \in B_n$ and there is an $x \in \sigma$ such that $d(x,A) \geq  \eps$ and $x\not\in C_\eps$, where $C_\eps\subset X$ is the ball concentric with $C$ and with radius equal to the radius of $C$ plus $\eps$. Then, for any bijection $\phi\colon \sigma \to \sigma_n$, we have $d(x,\phi(x))>\eps-\eps/2$, which implies that $d_p(\sigma,\sigma_n) \geq \eps/2$. This contradicts the assumption that $\sigma \in B_n$ and implies that $u_\eps (S)$ is birth-death bounded since, for all $\sigma\in u_\eps(S)$ and all $x\in \sigma$, we have proved that $x\in C_\eps$.

To prove that $S$ is uniform, we also proceed by contradiction. Suppose
that $\sigma \in B_n$ and $d_p (l_{\al/2} (\sigma),\sigma_\vn) > \eps$. Consider a bijection $\phi\colon \sigma \to \sigma_n$ and let $\sigma_b$ and $\sigma_t$ be maximal subdiagrams of $l_{\al/2}(\sigma)$ such that $d(\phi(x),A) < \al$ for $x \in \sigma_b$ and $d(\phi(x),A) \geq \al$ for $x \in \sigma_t$. If $d_p(\sigma_b,\sigma_\vn)>\eps/2$, then
\[
\left(\sum_{x\in \sigma_b}d(x , \phi(x))^p\right)^{1/p}
\geq d_p(\sigma_b,\phi(\sigma_b)) \geq d_p(\sigma_b,\sigma_\vn) - d_p (\phi(\sigma_b),\sigma_\vn) > \frac{\eps}{2}-\frac{\eps}{4},
\]
where $\phi(\sigma_b)$ denotes the subdiagram of $\sigma_n$ which coincides with the image of $\sigma_b$ under $\phi$.
Since $\sigma_b$ and $\sigma_t$ do not have common points outside $A$ and $l_{\al/2}(\sigma)$ is the union of $\sigma_b$ and $\sigma_t$,
we have 
\[
d_p(l_{\al/2}(\sigma),\sigma_\vn)^p = d_p (\sigma_b,\sigma_\vn)^p + d_p(\sigma_t,\sigma_\vn)^p.
\]
Thus, if $d_p (\sigma_b ,\sigma_\vn) \leq \eps/2$, then \[d_p (\sigma_t,\sigma_\vn) > \left(\eps^p - 2^{-p} \eps^p\right)^{1/p}\geq \eps/2.\] 
Note also that if $x \in \sigma_t$, then $d(x,\phi(x))\geq \al-\al/2 \geq d(x,A)$. Therefore,
\[\left(\sum_{x\in \sigma_t} d(x,\phi(x))^p\right)^{1/p}\geq
\left(\sum_{x\in \sigma_t} d(x,A)^p\right)^{1/p}
= d_p (\sigma_t ,\sigma_\vn ) > \frac{\eps}{2}.
\]
Thus, for any bijection $\phi \colon \sigma \to \sigma_n$ we have
\[
\left(\sum_{x\in \sigma} d(x,\phi(x))^p\right)^{1/p}>\frac{\eps}{4}.
\]
Therefore, $d_p (\sigma,\sigma_n) \geq \eps/4$, which contradicts our assumption that $\sigma \in B_n$. Consequently, 
\[
d_p (l_{\al/2} (\sigma),\sigma_\vn) \leq \eps
\]
for all $\sigma \in S$, which implies that $S$ is uniform.

We now prove the ``only if'' statement. Assume that $S\subset \DD_p(X,A)$ is bounded, off-diagonally birth-death bounded, and uniform. Given $\eps > 0$, let $\delta> 0$ be such that $d_p (l_\delta (\sigma),\sigma_\vn ) < \eps/2$
for all $\sigma \in S$. Take a ball $C\subset X$ such that, for all $\sigma \in S$ and all $x \in u_\delta (\sigma)$, we have $x\in C$. Since $S$ is bounded, we can also find a constant $M \in \NN$ such that $|u_\delta(\sigma)| \leq M$ for all $\sigma \in S$. On the other hand, since $C$ is a bounded subset of a proper complete space, $C$ is also totally bounded and we can find
points $x_1,\dots, x_N \in C$ such that, for any $x \in C$, we have $d(x,x_n) \leq M^{-1/p} \eps/2$ for some $1\leq n\leq N$. Let $\sigma^\ast$ be the diagram consisting of points $x_n$ with $1 \leq n \leq N$, each with multiplicity $M$ and let $\sigma_1,\dots,\sigma_L$ with $L = (M+1)^N$ be all subdiagrams of $\sigma^\ast$. If $\sigma \in S$, we can find $\sigma_n$ and a bijection $\phi\colon u_\delta (\sigma) \to \sigma_n$ such that
\[
\left(\sum_{x\in u_\delta (\sigma)} d(x,\phi(x))^p\right)^{1/p}<\frac{\eps}{2}.
\]
Let $\overline{\phi}\colon \sigma \to \sigma_n$ be the extension of $\phi$ to $\sigma$ obtained by mapping the points in $l_\delta (\sigma)$ to $A$. Then,
\[
\left(\sum_{x\in \sigma} d(x,\overline{\phi}(x))^p\right)^{1/p}
=
\left(\sum_{x\in u_\delta(\sigma)} d(x,\overline{\phi}(x))^p+\sum_{x\in l_\delta(\sigma)} d(x,\overline{\phi}(x))^p\right)^{1/p}
< 2^{\frac{1}{p}-1}\eps \leq \eps.
\]
Therefore, $d_p (\sigma,\sigma_n ) < \eps$ and we conclude that $S$ is totally bounded.
\end{proof}

We now recall the definition of Fr\'echet mean set of probability measures on a metric space and state it in our setting.

% DEF: FRECHET MEAN SET

\begin{defn}
Given a Borel probability measure $\mu$ on $\DD_p(X,A)$, the quantity
\[
\mathrm{Var}(\mu) = \inf_{\sigma \in \DD_p(X,A)}\left\lbrace F_\mu (\sigma) = \int_{\DD_p(X,A)} d_p (\sigma, \tau)^2\ d\mu (\tau)\right\rbrace
\]
is the \emph{Fr\'echet variance} of $\mu$ and the \emph{Fr\'echet mean set} of $\mu$, denoted by $F(\mu)$ is the set of points in $\DD_p(X,A)$ that realize $\mathrm{Var}(\mu)$, i.e.
\[
F(\mu) = \{\sigma\in \DD_p(X,A) : F_\mu (\sigma) = \mathrm{Var}(\mu) \}.
\]
\end{defn}

We also recall the definitions of various concepts mentioned in Theorem \ref{t:frechet means}.

\begin{defn}
Let $\mu$ be a Borel probability measure on $\DD_p(X,A)$.
\begin{enumerate}
\item We say that $\mu$ has \emph{finite second moment} if
\[
F_\mu(\sigma) < \infty
\] for any $\sigma\in \DD_p(X,A)$.
\item The \emph{support} of $\mu$ is the smallest closed subset $S$ of $\DD_p(X,A)$ such that $\mu(S)=1$.
\item We say that $\mu$ is \emph{tight} if, for any $\eps>0$, there is a compact subset $S\subset \DD_p(X,A)$ such that $\mu(\DD_p(X,A)\setminus S)<\eps$.
\item We say that $\mu$ has \emph{rate of decay at infinity $q$} if for some (and hence for all) $\sigma_0\in \DD_p(X,A)$ there exist $C>0$ and $R>0$ such that, for all $r\geq R$,
\[
\mu(\DD_p(X,A)\setminus B_r(\sigma_0)) \leq Cr^{-q}.
\] 
\end{enumerate}
\end{defn}

The following lemma is essential for the proof of Theorem \ref{t:frechet means}(1) and is an analog of Lemma 23 in \cite{mileyko1}. We include the proof with the necessary modifications for the reader's convenience.

% LEMMA

\begin{lem}\label{l:key-lemma-frechet}
Let $\mu$ be a finite Borel measure on $\DD_p(X,A)$ with finite second moment and compact support $S \subset \DD_p(X,A)$, and let $\{\sigma_n\}_{n\in \NN} \subset \DD_p(X,A)$ be a bounded sequence which is not off-diagonally birth-death bounded or uniform. %or/and not uniform.
Further, let $C_1 > 1$ and $C_2 > 1$ be bounds on $S$ and $\{\sigma_n\}_{n\in \NN}$, respectively, that is, $d_p (\sigma, \sigma_\vn) \leq C_1$ for all $\sigma \in S$ and $d_p (\sigma_n, \sigma_\vn) \leq C_2$ for all $n\in\NN$. Then there exists $\delta > 0$ (depending only on $\{\sigma_n\}_{n\in \NN}$), a subsequence $\{\sigma_{n_k}\}_{k \in \NN}$, and subdiagrams $\overline{\sigma}_{n_k}$ such that
\[
\int_S d_p (\overline{\sigma}_{n_k}, \sigma)^2\ d\mu (\sigma) \leq
\int_S d_p (\sigma_{n_k}, \sigma)^2 d\mu (\sigma) - \eps_0 \mu (S),
\]
where 
\[
\eps_0 = (2^{s/2}-1)(C_1 + C_2)^{2-s}\delta^s,\quad 
s = \max\{2, p\}.
\]
\end{lem}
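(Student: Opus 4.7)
The plan is to follow the approach of Lemma~23 in \cite{mileyko1}, adapted to our setting of general metric pairs, splitting the argument into the two cases in which the hypothesis fails.

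In Case~(I), where $\{\sigma_n\}$ is not off-diagonally birth-death bounded, I fix $\eps_1>0$ with $u_{\eps_1}(\{\sigma_n\})$ unbounded in $X$, pass to a subsequence, and pick $x_{n_k}\in u_{\eps_1}(\sigma_{n_k})$ with $d(x_{n_k},x_0)\to\infty$ for some fixed $x_0\in A$, setting $\overline{\sigma}_{n_k}:=\sigma_{n_k}\setminus\{x_{n_k}\}$. In Case~(II), where $\{\sigma_n\}$ is not uniform, I fix $\eps>0$ witnessing non-uniformity, choose $\delta>0$ sufficiently small relative to the uniformity modulus of the compact set $S$, pass to a subsequence with $d_p(l_\delta(\sigma_{n_k}),\sigma_\vn)>\eps$, and set $\beta_{n_k}:=l_\delta(\sigma_{n_k})$ and $\overline{\sigma}_{n_k}:=u_\delta(\sigma_{n_k})$.

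The heart of the proof will be to establish, for each $\sigma\in S$, a pointwise bound of the form $D^p-\overline{D}^p\geq\delta^p$, where $D:=d_p(\sigma_{n_k},\sigma)$ and $\overline{D}:=d_p(\overline{\sigma}_{n_k},\sigma)$. Given an optimal bijection $\phi\colon\sigma_{n_k}\to\sigma$, I construct a bijection $\overline{\phi}\colon\overline{\sigma}_{n_k}\to\sigma$ by keeping $\phi$ on $\overline{\sigma}_{n_k}$, discarding points of $\beta_{n_k}$ matched by $\phi$ to $A$, and rerouting each off-diagonal image $y_x=\phi(x)$ (for $x\in\beta_{n_k}$) to its closest $A$-point. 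Writing $\beta_{n_k}=\beta_{n_k}^{(1)}\sqcup\beta_{n_k}^{(2)}$ for the split into points matched off-diagonally and points matched to $A$, this gives
\[
D^p-\overline{D}^p \geq \sum_{x\in\beta_{n_k}^{(2)}} d(x,A)^p + \sum_{x\in\beta_{n_k}^{(1)}}\bigl(d(x,y_x)^p-d(y_x,A)^p\bigr).
\]
The key additional bound is the \emph{optimality swap inequality} $d(x,y_x)^p\leq d(x,A)^p+d(y_x,A)^p$, which follows from comparing $\phi$ with the matching that sends both $x$ and $y_x$ to their closest $A$-points. In Case~(I), combining this with the bounded support of high-persistence points of $\sigma\in S$ (from Proposition~\ref{prop:totally bounded sets} applied to the compact set $S$) and the spatial escape $d(x_{n_k},x_0)\to\infty$ forces $\phi(x_{n_k})$ to be at distance at least $\delta\sim\eps_1$ from $x_{n_k}$ for $k$ large. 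In Case~(II), combining the swap inequality with the uniformity of $S$ (to control $\sum_{\beta_{n_k}^{(1)}}d(y_x,A)^p$) and with $d_p(\beta_{n_k},\sigma_\vn)>\eps$ yields the analogous estimate, with $\delta$ determined by $\eps$ and the uniformity modulus of $S$.

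Finally, I convert the $p$-th power estimate into the claimed $2$-nd power estimate: using $D,\overline{D}\leq C_1+C_2$ (from the bounds on $S$ and $\{\sigma_n\}$ and the triangle inequality) together with elementary convexity inequalities for $t\mapsto t^{s/2}$ with $s=\max\{2,p\}$, the estimate $D^s-\overline{D}^s\geq\delta^s$ implies
\[
D^2-\overline{D}^2 \geq (2^{s/2}-1)(C_1+C_2)^{2-s}\delta^s =: \eps_0,
\]
from which integration over $\sigma\in S$ against $\mu$ gives the lemma. The main obstacle will be the uniform (in $\sigma\in S$) control of the rerouting cost in Case~(II), where Proposition~\ref{prop:totally bounded sets} is used to balance $\delta$ against the uniformity modulus of $S$; a secondary, purely computational obstacle is producing the correct multiplicative constant in the $p$-to-$2$ power conversion.
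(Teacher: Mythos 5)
Your overall architecture is the paper's: split into the two failure cases, remove either the spatially escaping high-persistence points or the low-persistence tail to form $\overline{\sigma}_{n_k}$, prove the pointwise master inequality $d_p(\sigma_{n_k},\sigma)^p \geq d_p(\overline{\sigma}_{n_k},\sigma)^p + \delta^p$ uniformly over $\sigma \in S$ via a modified matching, convert $p$-th powers to squares using the bound $C_1+C_2$, and integrate. Case~(I) and the final power conversion are fine (modulo the fact that you should use near-optimal rather than optimal bijections, since the lemma does not assume $X$ proper; the paper works with bijections within $\delta^p$ of optimal).

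The gap is in Case~(II), and it is exactly the step you flag as the main obstacle. First, the ``optimality swap inequality'' $d(x,y_x)^p \leq d(x,A)^p + d(y_x,A)^p$ points the wrong way: it yields $d(x,y_x)^p - d(y_x,A)^p \leq d(x,A)^p$, an \emph{upper} bound on the quantity you must bound from \emph{below}. Second, uniformity of $S$ does not control $\sum_{x\in\beta^{(1)}_{n_k}} d(y_x,A)^p$, because the images $y_x$ of low-persistence points of $\sigma_{n_k}$ need not be low-persistence points of $\sigma$; a single threshold $\delta$ cannot handle the $y_x$ with $d(y_x,A)$ of order $\alpha_0$ up to $C_1$. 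The paper's proof (following Lemma~23 of Mileyko--Mukherjee--Harer) needs \emph{two nested thresholds}: $\alpha_0$ chosen by uniformity of $S$ so that $d_p(l_{\alpha_0}(\sigma),\sigma_\vn)\leq\delta$ for all $\sigma\in S$, and a much smaller cutoff $\alpha_1$ for $l_{\alpha_1}(\sigma_{n_k})$, with $\alpha_1 = \min\{\delta_0\alpha_0, M^{-1/p}\delta\}$ where $M$ bounds $|u_{\alpha_0}(\sigma)|$ over $S$. The low-persistence points of $\sigma_{n_k}$ whose images have persistence $\geq \alpha_0$ are then at most $M$ in number, and the reverse-triangle-inequality loss $d(y_x,A)^p - (d(y_x,A)-\alpha_1)^p$ summed over them is forced below $\delta^p$ by the choice of $\alpha_1$; the remaining points are handled not pointwise but by the Minkowski inequality at the level of whole sub-multisets, using $d_p(s_1,\sigma_\vn) - d_p(\gamma(s_1),\sigma_\vn) \geq (\eps^p-\delta^p)^{1/p}-\delta$. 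Without this two-scale splitting your Case~(II) estimate does not close.
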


With these preliminaries in hand, the proof of Theorem~\ref{t:frechet means} now follows as in the Euclidean case (see \cite[Theorems 24 and 28]{mileyko1}). We include the proof of item (1), as it is brief, and indicate the necessary steps to prove item (2), referring to \cite{mileyko1} for further details.

\begin{proof}[Proof of Lemma 5.6]
First, consider the case when $\{\sigma_n\}_{n\in\NN}$ is not off-diagonally birth-death bounded. Fix $x_0\in X$. Then there exists $0 < \eps < 1$ such that, for any $C>0$ and $N > 0$, there is $n > N$ and $x \in \sigma_n$ satisfying $d(x,A) \geq \eps$ and $d(x,x_0)\geq C$. Take $0 < \delta < \eps/2$ and choose $C_0>0$ such that for all $\sigma \in S$ we have $d(x,x_0)\leq C_0$ for $x \in u_\delta (\sigma)$. Set $C_3 = C_0 + C_1 + C_2 + 1$. Let $\{\sigma_{n_k}\}_{k\in \NN}$ be a subsequence of $\{\sigma_n\}_{n\in\NN}$ such that each $\sigma_{n_k}$ contains a point $x$ with $d(x,A) \geq \eps$ and $d(x,x_0) \geq C_3$, and let $\overline{\sigma}_{n_k}$ be the subdiagram of $\sigma_{n_k}$ obtained by removing all such points $x$. Take $\sigma \in S$ and let $\gamma \colon \sigma_{n_k} \to \sigma$ be a bijection such that
\[
\sum_{x\in \sigma_{n_k}} d(x,\gamma(x))^p\leq d_p(\sigma_{n_k},\sigma)^p + \delta^p .
\]
Note that 
\[d_p (\sigma_{n_k},\sigma)\leq d_p (\sigma_{n_k},\sigma_\vn) + d_p (\sigma,\sigma_\vn) \leq  C_1 + C_2.
\]
Hence,
for any $x \in \sigma_{n_k}$ such that $d(x,x_0)\geq C_3$, we have
\[
d(\gamma(x),x_0) \geq  d(x,x_0)-d(\gamma(x),x) \geq C_3-((C_1+C_2)^p+\delta^p)^{1/p} > C_0,
\] 
since we can take $\delta$ to be sufficiently small. Thus, $\gamma(x) \in  l_\delta (\sigma)$ for $x \in \sigma_{n_k}$ with $d(x,x_0) \geq C_3$ and it follows that, for any $x\in \sigma_{n_k}$ such that $d(x,A)\geq \eps$ and $d(x,x_0)\geq C_3$, we have
\[
d(x,\gamma(x))\geq d(x,A)-d(\gamma(x),A) \geq \eps - \delta > \delta.
\] Let $\overline{\gamma}\colon \overline{\sigma}_{n_k} \to \sigma$ be the bijection obtained from $\gamma$ by pairing points $\gamma(x)$ such that $d(x,A) \geq \eps$ and $d(x,x_0) \geq C_3$ to the diagonal. Then
\begin{align}
\label{eq:frechet}
\sum_{x\in \sigma_{n_k}} d(x,\gamma(x))^p &= \sum_{x\in \overline{\sigma}_{n_k}} d(x,\gamma(x))^p + \sum_{x\in \sigma_{n_k}\setminus \overline{\sigma}_{n_k}} d(x,\gamma(x))^p  \nonumber\\
&\geq \sum_{x\in \overline{\sigma}_{n_k}} d(x,\gamma(x))^p + \delta^p \nonumber\\
&\geq \sum_{x\in \overline{\sigma}_{n_k}} d(x,\overline{\gamma}(x))^p+\delta^p
\end{align}
which implies, after applying the inequalities  in the proof of \cite[Lemma 23]{mileyko1}, that
\begin{align}
\label{eq:frechet2}
\left(\sum_{x\in \sigma_{n_k}} d(x,\gamma(x))^p\right)^{2/p} \geq \left(\sum_{x\in \overline{\sigma}_{n_k}} d(x,\overline{\gamma}(x))^p\right)^{2/p}+\eps_0,
\end{align}
where
\[
\eps_0 = (2^{2/s}-1)(C_1+C_2)^{2-s}\delta^2, \quad s = \max\{2,p\}.
\]
Therefore, after taking infimum with respect to $\gamma$ and integrating with respect to $\mu$ on both sides in inequality~\eqref{eq:frechet2}, we obtain
\[
\int_S d_p(\sigma_{n_k},\sigma)^2\ d\mu(\sigma)
\geq \int_S d_p(\overline{\sigma}_{n_k},\sigma)^2\ d\mu(\sigma) + \eps_0 \mu(S).
\]
This proves the lemma in the case where 
$\{\sigma_n\}_{n\in\NN}$ is not off-diagonally birth-death bounded.

Suppose now that $\{\sigma_n\}_{n\in\NN}$ is not uniform. Let $\eps > 0$ be such that, for any $\alpha > 0$ and $N > 0$, there exists $n > N$ such that $d_p (l_\alpha (\sigma_n),\sigma_\vn ) \geq \eps$. If necessary, decrease the $\delta$ from the previous case so that $0 < \delta < \eps/4$ and choose $\alpha_0$ such that $d_p(l_{\alpha_0} (\sigma),\sigma_\vn) \leq \delta$ for all $\sigma \in S$. Take $M \geq 1$ and $C > \delta$ such that, for all $\sigma \in S$, we have $|u_{\alpha_0} (\sigma)| \leq M$ and $d(x,A) \leq C$ for $x \in \sigma$. Define $f \colon [0,1] \to [0,1]$ as $f(x) = 1 - (1 - x)^p$. Note that $f$ is a continuous, monotonically increasing function and $f(0) = 0$, $f(1) = 1$. Set $\delta_0 = f^{- 1} (M^{- 1} C^{-p} \delta^p)$, and $\alpha_1 = \min\{\delta_0 \alpha_0,M^{-1}/p \delta\}$. Let $\{\sigma_{n_k}\}_{k\in\NN}$ be a subsequence of $\{\sigma_n\}_{n\in\NN}$ such that $d_p (l_{\alpha_1} (\sigma_{n_k}),\sigma_\vn) \geq \eps$, $k \geq 1$, and let $\overline{\sigma}_{n_k} = u_{\alpha_1}(\sigma_{n_k})$. Take $\sigma\in S$ and let $\gamma \colon \sigma_{n_k} \to \sigma$ be a bijection such that
\[
\sum_{x\in \sigma_{n_k}} d(x,\gamma(x))^p \leq d_p(\sigma_{n_k},\sigma)^p + \delta^p.
\]
Let $\widetilde\gamma\colon\overline{\sigma}_{n_k}\to \sigma$ be the bijection obtained from $\gamma$ by pairing points in $\gamma(l_{\alpha_1}(\sigma_{n_k}))$ to the diagonal. For convenience, let 
\begin{align*}
    s_0 & =\overline{\sigma}_{n_k},\\
    s_1 & =\{x\in \sigma_{n_k}:d(x,A) < \alpha_1,\ d(\gamma(x),A) < \alpha_0\},\\
    s_2& =\{x\in \sigma_{n_k}: d(x,A) < \alpha_1,\ d(\gamma(x),A)\geq \alpha_0\}.
\end{align*} Note that
\[
\sum_{x\in s_2}d(x,A)^p\leq M\alpha_1^p\leq\delta^p.
\]
Thus,
\[
\sum_{x\in s_1} d(x,A)^p \geq \eps-\delta^p.
\]
Consequently,
\begin{align*}
d_p(s_1,\sigma_\vn)-d_p(\gamma(s_1),\sigma_\vn) &= \left(\sum_{x\in s_1} d(x,A)^p\right)^{1/p}-\left(\sum_{x\in s_1} d(\gamma(x),A)^p\right)^{1/p}\\
&\geq \eps\left(1-\left(\frac{\delta}{\eps}\right)^p\right)^{1/p}-\delta\\
&\geq \beta\delta
\end{align*}
for any $2\leq\beta\leq (4^p-1)^{1/p}-1$. Thus,
\[
\left(\sum_{x\in s_1} d(x,\gamma(x))^p\right)^{1/p}\geq d_p(s_1,\gamma(s_1))\geq d_p(s_1,\sigma_\vn)-d_p(\gamma(s_1),\sigma_\vn) \geq \beta\delta.
\]
Also,

\begin{align*}
\sum_{x\in s_2} d(x,\gamma(x))^p &\geq \sum_{x\in s_2} (d(\gamma(x),A)-\alpha_1)^p\\
    &= \sum_{x\in s_2} \left(d(\gamma(x),A)^p-d(\gamma(x),A)^pf\left(\frac{\alpha_1}{d(\gamma(x),A)}\right)\right)\\
    &\geq \sum_{x\in s_2} \left(d(\gamma(x),A)^p-C^pf\left(\frac{\alpha_1}{\alpha_0}\right)\right)\\
    &\geq -\delta^p+\sum_{x\in s_2} d(\gamma(x),A)^p.
\end{align*}
We then have
\begin{align*}
    \sum_{x\in \sigma_{n_k}} d(x,\gamma(x))^p &=\sum_{x\in s_0} d(x,\gamma(x))^p+\sum_{x\in s_1} d(x,\gamma(x))^p+\sum_{x\in s_2} d(x,\gamma(x))^p\\
    &\geq \sum_{x\in s_0} d(x,\overline{\gamma}(x))^p + \beta^p\delta^p - \delta^p+\sum_{x\in s_2} d(\gamma(x),A)^p\\
    &\geq \beta^p\delta^p - \delta^p +\sum_{x\in s_0} d(x,\overline{\gamma}(x))^p +\sum_{x\in s_1} d(\gamma(x),A)^p +\sum_{x\in s_2} d(\gamma(x),A)^p\\
    &\geq \delta^p+ \sum_{x\in \overline{\sigma}_{n_k}} d(x,\overline{\gamma}(x))^p.
\end{align*}
Thus, we have arrived at inequality~\eqref{eq:frechet}, and we may finish the argument as in the  previous case. This finishes the proof of the lemma.
\end{proof}

\subsection*{Proof of Theorem \ref{t:frechet means}}
For the proof of item (1), let $S\subset \DD_p(X,A)$ be the support of $\mu$ and let $\{\sigma_{n}\}_{n\in \NN}$ be a sequence $\DD_p(X,A)$ such that $F_{\mu}(\sigma_n)\to \mathrm{Var}(\mu)$. 

We will proceed by contradiction. Suppose that $\{\sigma_n\}_{n\in \NN}$ is not bounded and let 
\[
w_n=\inf_{\sigma\in S}d_p(\sigma_n,\sigma).
\]
Then $\{w_n\}_{n\in \NN}$ is not bounded either. In particular,
\[
F_\mu(\sigma_n) = \int_S d_p(\sigma_n,\sigma)^2\ d\mu(\sigma) \geq w_n^2\mu(S)\longrightarrow \infty,
\] which is absurd. Thus, $\{\sigma_n\}_{n\in \NN}$ is bounded.

Assume now that $\{\sigma_n\}_{n\in\NN}$ is not off-diagonally birth-death bounded or it is not uniform. Then,  by Lemma \ref{l:key-lemma-frechet}, there exist a subsequence $\{\sigma_{n_k}\}_{k\in \NN}$ and subdiagrams $\overline{\sigma}_{n_k}\subset \sigma_{n_k}$ such that
\[
\int_S d_p (\overline{\sigma}_{n_k}, \sigma)^2\ d\mu (\sigma) \leq
\int_S d_p (\sigma_{n_k}, \sigma)^2 d\mu (\sigma) - \eps_0 \mu (S).
\]
Taking the infimum over $k$, we get that
\[
\mathrm{Var}(\mu)\leq \mathrm{Var}(\mu)-\eps_0\mu(S),
\] which is a contradiction. This finishes the proof of item (1).

To prove item (2), one first proves an analog of  \cite[Lemma 27]{mileyko1}, with minor modifications necessary to adapt the Euclidean proof to the general setting of $\DD_p(X,A)$. This lemma then implies the result, in a similar fashion as in the proof of \cite[Theorem 28]{mileyko1}.
\hfill $\square$

\section*{Declarations}

\noindent \textbf{Ethical Approval:} Not applicable.\\
 
\noindent \textbf{Competing interests:} The authors do not have any competing interests. \\

\noindent \textbf{Authors' contributions:} All authors contributed equally to the manuscript.\\
 
\noindent \textbf{Funding}: 
M.C. was funded by CONACYT Doctoral Scholarship No. 769708. F.G.G. was funded in part by research grants MTM2017–85934–C3–2–P from the Ministerio de Economía y Competitividad de España (MINECO) and PID2021-124195NB-C32 from the Ministerio de Ciencia e Innovación (MICINN). L.G. was funded in part by research grants MTM2017–85934–C3–2–P from the Ministerio de Economía y Competitividad de España (MINECO), PID2021-124195NB-C32 from the Ministerio de Ciencia e Innovación (MICINN), QUAMAP - Quasiconformal Methods in Analysis and Applications (ERC grant 834728),
and by ICMAT Severo Ochoa project CEX2019-000904-S (MINECO). I.A.M.S. was funded by the Leverhulme Trust (grant RPG-2019-055).\\
 
\noindent \textbf{Availability of data and materials:} Not applicable.

\bibliographystyle{amsplain}
\bibliography{ALEXPERS_CURRENT}

\providecommand{\bysame}{\leavevmode\hbox to3em{\hrulefill}\thinspace}
\providecommand{\MR}{\relax\ifhmode\unskip\space\fi MR }
% \MRhref is called by the amsart/book/proc definition of \MR.
\providecommand{\MRhref}[2]{%
  \href{http://www.ams.org/mathscinet-getitem?mr=#1}{#2}
}
\providecommand{\href}[2]{#2}
\begin{thebibliography}{10}

\bibitem{ARC14}
Aaron Adcock, Daniel Rubin, and Gunnar Carlsson, \emph{Classification of
  hepatic lesions using the matching metric}, Computer vision and image
  understanding \textbf{121} (2014), 36--42.

\bibitem{Afsari}
Bijan Afsari, \emph{Riemannian {$L^p$} center of mass: existence, uniqueness,
  and convexity}, Proc. Amer. Math. Soc. \textbf{139} (2011), no.~2, 655--673.
  \MR{2736346}

\bibitem{ahumada_che}
Andr\'es Ahumada~G\'omez and Mauricio Che, \emph{Gromov-{H}ausdorff convergence
  of metric pairs and metric tuples}, Differential Geom. Appl. \textbf{94}
  (2024), Paper No. 102135, 29. \MR{4727066}

\bibitem{Assouad}
Patrice Assouad, \emph{Sur la distance de {N}agata}, C. R. Acad. Sci. Paris
  S\'{e}r. I Math. \textbf{294} (1982), no.~1, 31--34. \MR{651069}

\bibitem{bate2024bilipschitz}
David Bate and Ana~Lucia Garcia-Pulido, \emph{Bi-{L}ipschitz embeddings of the
  space of unordered $m$-tuples with a partial transportation metric}, Math.
  Ann. (2024),
  \href{https://doi.org/10.1007/s00208--024--02831--x}{https://doi.org/10.1007/s00208--024--02831--x}.

\bibitem{Bell}
Greg Bell, \emph{Asymptotic dimension}, Office hours with a geometric group
  theorist, Princeton Univ. Press, Princeton, NJ, 2017, pp.~219--236.
  \MR{3587224}

\bibitem{BellDran}
Greg Bell and Alexander Dranishnikov, \emph{Asymptotic dimension}, Topology
  Appl. \textbf{155} (2008), no.~12, 1265--1296. \MR{2423966}

\bibitem{bak2021}
H\aa vard~Bakke Bjerkevik, \emph{On the stability of interval decomposable
  persistence modules}, Discrete Comput. Geom. \textbf{66} (2021), no.~1,
  92--121. \MR{4270636}

\bibitem{botnan2022}
Magnus~Bakke Botnan, Vadim Lebovici, and Steve Oudot, \emph{On
  rectangle-decomposable 2-parameter persistence modules}, Discrete \&
  Computational Geometry (2022), 1--24.

\bibitem{bubenik.persistence.landscapes}
Peter Bubenik, \emph{Statistical topological data analysis using persistence
  landscapes}, J. Mach. Learn. Res. \textbf{16} (2015), 77--102. \MR{3317230}

\bibitem{bubenik2}
Peter Bubenik and Alex Elchesen, \emph{Universality of persistence diagrams and
  the bottleneck and {W}asserstein distances}, Comput. Geom. \textbf{105/106}
  (2022), Paper No. 101882, 18. \MR{4414770}

\bibitem{bubenik1}
\bysame, \emph{Virtual persistence diagrams, signed measures, {W}asserstein
  distances, and {B}anach spaces}, J. Appl. Comput. Topol. \textbf{6} (2022),
  no.~4, 429--474. \MR{4496687}

\bibitem{bubenik-hartsock}
Peter Bubenik and Iryna Hartsock, \emph{Topological and metric properties of
  spaces of generalized persistence diagrams}, J. Appl. Comput. Topol.
  \textbf{8} (2024), no.~2, 347--399. \MR{4768640}

\bibitem{BW20}
Peter Bubenik and Alexander Wagner, \emph{Embeddings of persistence diagrams
  into {H}ilbert spaces}, J. Appl. Comput. Topol. \textbf{4} (2020), no.~3,
  339--351. \MR{4130975}

\bibitem{buchet18}
Micka{\"e}l Buchet, Yasuaki Hiraoka, and Ippei Obayashi, \emph{Persistent
  homology and materials informatics}, Nanoinformatics, Springer, Singapore,
  2018, pp.~75--95.

\bibitem{BBI}
Dmitri Burago, Yuri Burago, and Sergei Ivanov, \emph{A course in metric
  geometry}, Graduate Studies in Mathematics, vol.~33, American Mathematical
  Society, Providence, RI, 2001. \MR{1835418}

\bibitem{BGP}
Yu. Burago, Mikhail Gromov, and Grigori Perel'man, \emph{A. {D}. {A}leksandrov
  spaces with curvatures bounded below}, Uspekhi Mat. Nauk \textbf{47} (1992),
  no.~2(284), 3--51, 222. \MR{1185284}

\bibitem{carriere_et_al:LIPIcs.SoCG.2019.21}
Mathieu Carri\`{e}re and Ulrich Bauer, \emph{{On the Metric Distortion of
  Embedding Persistence Diagrams into Separable Hilbert Spaces}}, 35th
  International Symposium on Computational Geometry (SoCG 2019) (Dagstuhl,
  Germany) (Gill Barequet and Yusu Wang, eds.), Leibniz International
  Proceedings in Informatics (LIPIcs), vol. 129, Schloss Dagstuhl --
  Leibniz-Zentrum f{\"u}r Informatik, 2019, pp.~21:1--21:15.

\bibitem{CGGGMSV2022}
Mauricio Che, Fernando Galaz-Garc\'ia, Luis Guijarro, Ingrid Membrillo~Solis,
  and Motiejus Valiunas, \emph{Basic metric geometry of the bottleneck
  distance}, Proc. Amer. Math. Soc. \textbf{152} (2024), no.~8, 3575--3591.
  \MR{4767285}

\bibitem{chowdhury}
Samir {C}howdhury, \emph{Geodesics in persistence diagram space},
  arXiv:1905.10820, 2019.

\bibitem{cochoy2020}
J{\'e}r{\'e}my Cochoy and Steve Oudot, \emph{Decomposition of exact pfd
  persistence bimodules}, Discrete \& Computational Geometry \textbf{63}
  (2020), no.~2, 255--293.

\bibitem{C-SEH07}
David Cohen-Steiner, Herbert Edelsbrunner, and John Harer, \emph{Stability of
  persistence diagrams}, Discrete Comput. Geom. \textbf{37} (2007), no.~1,
  103--120. \MR{2279866}

\bibitem{CEHM10}
David Cohen-Steiner, Herbert Edelsbrunner, John Harer, and Yuriy Mileyko,
  \emph{Lipschitz functions have {$L_p$}-stable persistence}, Found. Comput.
  Math. \textbf{10} (2010), no.~2, 127--139. \MR{2594441}

\bibitem{divol21}
Vincent Divol and Th\'{e}o Lacombe, \emph{Understanding the topology and the
  geometry of the space of persistence diagrams via optimal partial transport},
  J. Appl. Comput. Topol. \textbf{5} (2021), no.~1, 1--53. \MR{4224153}

\bibitem{E08}
Herbert Edelsbrunner and John Harer, \emph{Persistent homology---a survey},
  Surveys on discrete and computational geometry, Contemp. Math., vol. 453,
  Amer. Math. Soc., Providence, RI, 2008, pp.~257--282. \MR{2405684}

\bibitem{ELZ00}
Herbert Edelsbrunner, David Letscher, and Afra Zomorodian, \emph{Topological
  persistence and simplification}, 41st {A}nnual {S}ymposium on {F}oundations
  of {C}omputer {S}cience ({R}edondo {B}each, {CA}, 2000), IEEE Comput. Soc.
  Press, Los Alamitos, CA, 2000, pp.~454--463. \MR{1931842}

\bibitem{Fraser}
Jonathan~M. Fraser, \emph{Assouad dimension and fractal geometry}, Cambridge
  Tracts in Mathematics, vol. 222, Cambridge University Press, Cambridge, 2021.
  \MR{4411274}

\bibitem{Gromov}
Mikhail Gromov, \emph{Asymptotic invariants of infinite groups}, Geometric
  group theory, {V}ol. 2 ({S}ussex, 1991), London Math. Soc. Lecture Note Ser.,
  vol. 182, Cambridge Univ. Press, Cambridge, 1993, pp.~1--295. \MR{1253544}

\bibitem{Ha}
Stephanie Halbeisen, \emph{On tangent cones of {A}lexandrov spaces with
  curvature bounded below}, Manuscripta Math. \textbf{103} (2000), no.~2,
  169--182. \MR{1796313}

\bibitem{Hatcher}
Allen Hatcher, \emph{Algebraic topology}, Cambridge University Press,
  Cambridge, 2002. \MR{1867354}

\bibitem{herron}
David~A. Herron, \emph{{G}romov--{H}ausdorff distance for pointed metric
  spaces}, J. Anal. \textbf{24} (2016), no.~1, 1--38. \MR{3755806}

\bibitem{hungerford}
Thomas~W. Hungerford, \emph{Algebra}, Graduate Texts in Mathematics, vol.~73,
  Springer-Verlag, New York-Berlin, 1980, Reprint of the 1974 original.
  \MR{600654}

\bibitem{DJ}
Dorothea Jansen, \emph{Notes on pointed {G}romov--{H}ausdorff convergence},
  arXiv:1703.09595, 2017.

\bibitem{kas17}
Daniel Kasprowski, \emph{The asymptotic dimension of quotients by finite
  groups}, Proc. Amer. Math. Soc. \textbf{145} (2017), no.~6, 2383--2389.
  \MR{3626497}

\bibitem{kim21}
Woojin Kim and Facundo M\'{e}moli, \emph{Generalized persistence diagrams for
  persistence modules over posets}, J. Appl. Comput. Topol. \textbf{5} (2021),
  no.~4, 533--581. \MR{4334502}

\bibitem{kov16}
Violeta Kovacev-Nikolic, Peter Bubenik, Dragan Nikoli\'{c}, and Giseon Heo,
  \emph{Using persistent homology and dynamical distances to analyze protein
  binding}, Stat. Appl. Genet. Mol. Biol. \textbf{15} (2016), no.~1, 19--38.
  \MR{3464008}

\bibitem{kusano.et.al.machine.learning}
Genki Kusano, Kenji Fukumizu, and Yasuaki Hiraoka, \emph{Kernel method for
  persistence diagrams via kernel embedding and weight factor}, J. Mach. Learn.
  Res. \textbf{18} (2017), Paper No. 189, 41. \MR{3827077}

\bibitem{LT}
Urs Lang and Thilo Schlichenmaier, \emph{Nagata dimension, quasisymmetric
  embeddings, and {L}ipschitz extensions}, Int. Math. Res. Not. (2005), no.~58,
  3625--3655. \MR{2200122}

\bibitem{mileyko1}
Yuriy Mileyko, Sayan Mukherjee, and John Harer, \emph{Probability measures on
  the space of persistence diagrams}, Inverse Problems \textbf{27} (2011),
  no.~12, 124007, 22. \MR{2854323}

\bibitem{MitraVirk}
Atish Mitra and \v{Z}iga Virk, \emph{The space of persistence diagrams on {$n$}
  points coarsely embeds into {H}ilbert space}, Proc. Amer. Math. Soc.
  \textbf{149} (2021), no.~6, 2693--2703. \MR{4246818}

\bibitem{mitsuishi}
Ayato Mitsuishi, \emph{A splitting theorem for infinite dimensional
  {A}lexandrov spaces with nonnegative curvature and its applications}, Geom.
  Dedicata \textbf{144} (2010), 101--114. \MR{2580420}

\bibitem{munch13}
Elizabeth Munch, \emph{Applications of persistent homology to time varying
  systems}, Ph.D. thesis, Duke University, 2013.

\bibitem{mun}
James~R. Munkres, \emph{Topology}, Prentice Hall, Inc., Upper Saddle River, NJ,
  2000, Second edition of [ MR0464128]. \MR{3728284}

\bibitem{NoYu}
Piotr~W. Nowak and Guoliang Yu, \emph{Large scale geometry}, EMS Textbooks in
  Mathematics, European Mathematical Society (EMS), Z\"{u}rich, 2012.
  \MR{2986138}

\bibitem{Ohta}
Shin-ichi Ohta, \emph{Barycenters in {A}lexandrov spaces of curvature bounded
  below}, Adv. Geom. \textbf{12} (2012), no.~4, 571--587. \MR{3005101}

\bibitem{Pat18}
Amit Patel, \emph{Generalized persistence diagrams}, J. Appl. Comput. Topol.
  \textbf{1} (2018), no.~3-4, 397--419. \MR{3975559}

\bibitem{PDeM15}
C{\'a}ssio M.~M. Pereira and Rodrigo~F de~Mello, \emph{Persistent homology for
  time series and spatial data clustering}, Expert Systems with Applications
  \textbf{42} (2015), no.~15-16, 6026--6038.

\bibitem{Plaut}
Conrad Plaut, \emph{Metric spaces of curvature {$\geq k$}}, Handbook of
  geometric topology, North-Holland, Amsterdam, 2002, pp.~819--898.
  \MR{1886682}

\bibitem{Roe}
John Roe, \emph{Lectures on coarse geometry}, University Lecture Series,
  vol.~31, American Mathematical Society, Providence, RI, 2003. \MR{2007488}

\bibitem{santambrogio}
Filippo Santambrogio, \emph{Optimal transport for applied mathematicians.
  {Calculus} of variations, {PDEs}, and modeling}, Prog. Nonlinear Differ. Equ.
  Appl., vol.~87, Cham: Birkh{\"a}user/Springer, 2015 (English).

\bibitem{SC17}
Jacek Skryzalin and Gunnar Carlsson, \emph{Numeric invariants from
  multidimensional persistence}, J. Appl. Comput. Topol. \textbf{1} (2017),
  no.~1, 89--119. \MR{3975550}

\bibitem{turner20}
Katharine Turner, \emph{Medians of populations of persistence diagrams},
  Homology Homotopy Appl. \textbf{22} (2020), no.~1, 255--282. \MR{4042792}

\bibitem{turner14}
Katharine Turner, Yuriy Mileyko, Sayan Mukherjee, and John Harer,
  \emph{Fr\'{e}chet means for distributions of persistence diagrams}, Discrete
  Comput. Geom. \textbf{52} (2014), no.~1, 44--70. \MR{3231030}

\bibitem{Wagner21}
Alexander Wagner, \emph{Nonembeddability of persistence diagrams with {$p>2$}
  {W}asserstein metric}, Proc. Amer. Math. Soc. \textbf{149} (2021), no.~6,
  2673--2677. \MR{4246816}

\bibitem{wofsey}
Eric Wofsey, \emph{The metrizability of symmetric products of metric spaces},
  MathOverflow (version: 2015-01-16).

\bibitem{Yo2012}
Takumi Yokota, \emph{A rigidity theorem in {A}lexandrov spaces with lower
  curvature bound}, Math. Ann. \textbf{353} (2012), no.~2, 305--331.
  \MR{2915538}

\bibitem{Yo2014}
\bysame, \emph{On the spread of positively curved {A}lexandrov spaces}, Math.
  Z. \textbf{277} (2014), no.~1-2, 293--304. \MR{3205773}

\bibitem{zhu13}
Xiaojin Zhu, \emph{Persistent homology: an introduction and a new text
  representation for natural language processing}, Proceedings of the
  Twenty-Third international joint conference on Artificial Intelligence, 2013,
  pp.~1953--1959.

\bibitem{ZC05}
Afra Zomorodian and Gunnar Carlsson, \emph{Computing persistent homology},
  Discrete Comput. Geom. \textbf{33} (2005), no.~2, 249--274. \MR{2121296}

\end{thebibliography}

\end{document}